\newtheorem{theorem}{Theorem}[section]
\newtheorem{lemma}[theorem]{Lemma}
\theoremstyle{definition}
\newtheorem{definition}[theorem]{Definition}
\newtheorem{remark}[theorem]{Remark}
\numberwithin{equation}{section}
\newcommand\restr[2]{{% we make the whole thing an ordinary symbol
  \left.\kern-\nulldelimiterspace % automatically resize the bar with \right
  #1 % the function
  \vphantom{\big|} % pretend it's a little taller at normal size
  \right|_{#2} % this is the delimiter
  }}
\title[Imaging using nanoparticles as contrast agents]{Mathematical imaging using electric or magnetic nanoparticles as contrast agents}
\author[Challa, Choudhury, Sini]{Durga Prasad \textbf{C}halla $^* $, Anupam Pal \textbf{C}houdhury $^{\dag} $, Mourad \textbf{S}ini $^{\ddag} $} 
\keywords{Imaging, nanoparticles, asymptotics, integral equations, electromagnetism.\\
$^* $ Faculty of Mathematics, Indian Institute of Technology Tirupati, Tirupati, India. Email: chsmdp@iittp.ac.in. This author was partially supported by the Austrian Science Fund (FWF): P28971-N32.\\
$^{\dag}$ Technische Universit{\"a}t Darmstadt, Institute of Mathematics, Schlossgartenstr. 7, 
64289 Darmstadt, Germany. Email: anupampcmath@gmail.com.  This author is supported by the DFG International Research Training Group IRTG 1529 on Mathematical Fluid Dynamics at TU Darmstadt. \\ 
$^{\ddag}$ RICAM, Austrian Academy of Sciences,
Altenbergerstrasse 69, A-4040, Linz, Austria.
Email:mourad.sini@oeaw.ac.at. This author is partially supported by the Austrian Science Fund (FWF): P28971-N32}
\begin{document}

\maketitle

\begin{abstract}  
We analyse mathematically the imaging modality using electromagnetic nanoparticles as contrast agent. 
This method uses the electromagnetic fields, collected before and after injecting electromagnetic nanoparticles, to reconstruct the electrical permittivity.
The particularity here is that these nanoparticles have high contrast electric or magnetic properties compared to
the background media. First, we introduce the concept of electric (or magnetic) nanoparticles to describe the particles, of relative diameter $\delta$ 
(relative to the size of the imaging domain), having relative electric permittivity (or relative magnetic permeability) of order $\delta^{-\alpha}$ with a certain $\alpha>0$, as $0<\delta<<1$.
Examples of such material, used in the imaging community, are discussed. 
Second, we derive the asymptotic expansion of the electromagnetic fields due to such singular contrasts. We consider here the scalar electromagnetic model. 
Using these expansions, we extract the values of the total fields inside the domain of imaging from the scattered fields measured before and after 
injecting the nanoparticles. From these total fields, we derive the values of the electric permittivity at the expense of numerical differentiations.
\end{abstract}

\section{Introduction}

 %\subsection{The mathematical model}
 Let $D$ be a small body in $\mathbb{R}^{3}$ of the form $D:= \delta B+z$, where
 $B$ is open, bounded with a Lipschitz boundary, simply connected set in $\mathbb{R}^{3}$ containing the origin, and $z$ specify the locations of $D$. 
 The parameter $\delta > 0 $ is the $B$-relative radius of $D$. It characterises the smallness assumption on the body $D$ as compared to $B$.\\
Let us consider the magnetic permeability $\mu_\delta$ and electric permittivity $\epsilon_\delta$  of the form 
\begin{equation}
\mu_{\delta}(x)=\begin{cases}
                 \mu_{0}, \ x \in \mathbb{R}^{3}\diagdown \overline D,\\
                 \mu_{1}, \ x \in D, \end{cases}
\label{model1}
\end{equation}
%\end{comment}
%
%
\begin{equation}
\epsilon_{\delta}(x)=\begin{cases}
                \epsilon_{0}, \ x \in \mathbb{R}^{3}\diagdown \overline D,\\
                \epsilon_{1}, \ x \in D,
               \end{cases}
\label{model2}
\end{equation}
where $\mu_{0},\mu_{1}, \epsilon_{1}$ are positive constants while $\epsilon_0:=\epsilon_0(x)$ is variable in a bounded domain, i.e. there exists $\Omega$ bounded 
such that $\epsilon_0$ is a constant in $\mathbb{R}^{3}\diagdown \overline \Omega$. Here, $\Omega$ is of the same order as the reference body $B$. Thus $\mu_{0},\epsilon_{0}$ denote the permeability and permittivity of the 
background medium and $\mu_{1},\epsilon_{1}$ denote the permeability and permittivity of the scatterers respectively.\\
We are then interested in the following scattering problem:
\begin{equation}
 %\begin{aligned}
\begin{cases}   \nabla. (\frac{1}{\mu_{0}} \nabla V) +\omega^{2} \epsilon_{0} V =0 \ \text{in} \ \mathbb{R}^{3}\diagdown \overline D,\\
   \nabla. (\frac{1}{\mu_{1}} \nabla V) +\omega^{2} \epsilon_{1} V =0 \ \text{in}\ D,\\
   \left.V \right\vert_{-}-\left.V \right\vert_{+}=0, \ \text{on}\ \partial D,\\
   \frac{1}{\mu_{1}}\left.\frac{\partial V}{\partial \nu} \right\vert_{-}-\frac{1}{\mu_{0}} \left.\frac{\partial V}{\partial \nu} \right\vert_{+} =0\ \text{on} \ \partial D,
   %\frac{}{}-i \kappa U^{s} = , \ \vert x \vert \rightarrow \infty \ (S.R.C), 
   \end{cases}
 %\end{aligned}
\label{model3}
\end{equation}
where $\omega > 0$ is a given frequency. The total field $V$ has the form $V:=V^{I}+V^{s}$ where $V^{I}$ denotes the incident field and $V^{s}$ denotes the scattered waves. The above set of equations have to be supplemented with the \textit{Sommerfeld radiation
condition} on $V^{s}$ which we shall henceforth refer to as $(S.R.C)$. Here, $V$ describes components of the electric field \footnote{ In case we have invariance of the model in one direction, 
this Helmholtz model describes the propagation of the component of the electric field which is orthogonal to that axis. To simplify the exposition, we stated it in $3D$ instead of $2D$ to avoid 
the log-type singularities of the corresponding Green's functions.}. Keeping in mind the positivity of the permeabilities, the above problem can be equivalently formulated as 
\begin{equation}
 \begin{cases}
  \Delta V + \kappa_{0}^{2} V =0 \ \text{in} \ \mathbb{R}^{3}\diagdown \overline D, \\
   \Delta V + \kappa_{1}^{2} V =0 \ \text{in}\ D,\\
   \left.V \right\vert_{-}-\left.V \right\vert_{+}=0, \ \text{on} \ \partial D,\\
   \frac{1}{\mu_{1}}\left.\frac{\partial V}{\partial \nu^{}} \right\vert_{-}-\frac{1}{\mu_{0}} \left.\frac{\partial V}{\partial \nu^{}} \right\vert_{+} =0\ \text{on} \ \partial D,\\
   \frac{\partial V^{s}}{\partial \vert x \vert}-i \kappa_{0} V^{s} =o (\frac{1}{\vert x \vert}) , \ \vert x \vert \rightarrow \infty \ (S.R.C),
 \end{cases}
\label{model4-}
\end{equation}
where $\kappa_{0}^{2}=\omega^{2} \epsilon_{0} \mu_{0}$, $\kappa_{1}^{2}=\omega^{2} \epsilon_{1} \mu_{1}$ and $V^{s}$ denotes the scattered field. We restrict to plane incident waves $V^I $, i.e. $V^{I}(x):=e^{i\kappa_0 x \cdot d} $, where $x \in \mathbb{R}^3, \ d\in \mathbb{S}^{2} $. Here $\mathbb{S}^{2} $ denotes the unit sphere in $\mathbb{R}^3 $.\\

The scattering problem 
(\ref{model4-}) is well posed in appropriate spaces and the scattered field $V^s(x, d)$ has the following asymptotic expansion:
\begin{equation}\label{far-field}
 V^s(x, d)=\frac{e^{i \kappa |x|}}{|x|}V^{\infty}(\hat{x}, d) + \mathcal{O}(|x|^{-2}), \quad |x|
\rightarrow \infty,
\end{equation}
with $\hat{x}:=\frac{x}{\vert x\vert}$, where the function
$V^{\infty}(\hat{x}, d)$ for $(\hat{x}, d)\in \mathbb{S}^{2} \times \mathbb{S}^{2}$  is called the far-field pattern.\\
%We recall that the fundamental solution, $\Phi_\kappa(x,y)$, of the Helmholtz equation in $\mathbb{R}^3$
%with the fixed wave number $\kappa$ is given by
%\begin{eqnarray}\label{definition-ac-small-fundamentalkappa}
% \Phi_\kappa(x,y)&:=&\frac{e^{i\kappa|x-y|}}{4\pi|x-y|},\quad \text{for all } x,y\in\mathbb{R}^3.
%\end{eqnarray}
{\bf{Problem}}. Our motivation in this work is the use of electromagnetic nanoparticles as contrast agent to image the background, i.e. to reconstruct the coefficient $\epsilon_0$ in $\Omega$. 
The measured data in this case is described as follows: 
\begin{enumerate}
 \item $V^0:=\{V^{\infty}(-d, d),\; \mbox{for a single direction } d \}$ when no nanoparticle is injected.
 \item  $V^1:=\{V^{\infty}(-d, d),\; \mbox{for a single direction } d\}$ when one single particle is injected to a given point $z\in \Omega$. 
 This measurement is needed for all the points (or a sample of points) $z$ in $\Omega$.
 \end{enumerate}
If, in (\ref{model3}) or (\ref{model4-}), we exchange the roles of $(\mu_1, \mu_0)$ with $(\epsilon_1, \epsilon_0)$, where $\mu_0$ is now locally variable while $\epsilon_0$ is a constant, 
then $V$ describes components of the magnetic field. For such a model, the goal is to reconstruct the magnetic permeabilty $\mu_0$ in $\Omega$ from the corresponding measured data generated by the nanoparticles
\footnote{ As the humain tissue is nonmagnetic, i.e. the corresponding permeability 
$\mu_0$ is merely constant, this model might not be useful for such imaging purpose. Nevertheless, we include this model here for the sake of completing the analysis.}.

\bigskip

Imaging using electromagnetic nanoparticles as contrast agents has drawn a considerable attention in the very recent years, see for instance \cite{B-B:2011,Chen-Craddock-Kosmas, Shea-Kosmas-VanVeen-Hagness}. 
To motivate it, let us first recall that conventional 
imaging techniques, such as microwave imaging techniques, are known to be potentially capable of extracting features in breast cancer, for instance, in case of relatively high contrast of
the permittivity and conductivity, between healthy tissues and malignant ones, \cite{F-M-S:2003}. However, it is observed that in the case of a benign tissue, the variation of the permittivity is quite low so that such conventional
imaging modalities are limited to be used for early detection of such diseases. 
In such cases, creating such missing contrast is highly desirable. One way to do it is to use electromagnetic nanoparticles as contrast agents, \cite{B-B:2011,Chen-Craddock-Kosmas}. 
In the literature, different types of nanoparticles have been proposed with this application in mind. Let us cite few of them: 
\begin{enumerate}
\item To create contrast in the permittivity, carbon nanotubes, ferroelectric nanoparticles and Calcium copper titanate are used, see \cite{B-B:2011}. 
Such particles have diameter which is estimated around $10$ nm, or $10^{-8}$ m, and have relative electrical permittivity of the order $10$ for the carbon nanotubes, $10^3$ for ferroelectric nanoparticles 
and around $10^6$ for Calcium copper titanate, see \cite{W1}. If the benign tumor 
is located at the cell level (which means that our $\Omega$ is that cell), with diameter of order $10^{-5}$ m, then the 
$\Omega$-relative radius of the particles are $\delta=10^{-3}$. Hence, the relative permittivity of the types of nanoparticles are estimated of the order $\delta^{-\alpha}$ where $\alpha$ is $\frac{1}{3}$ 
for the carbon nanotubes, $1$ for ferroelectric nanoparticles and around $2$ for Calcium copper titanate.

\item  The human tissue is known to be nonmagnetic. To create magnetic contrasts, it was also proposed in \cite{B-B:2011} to use iron oxide magnetic nanoparticles for imaging early tumors. Such material has relative 
 permeability of the order between $10^{4}$ and $10^{6}$, see \cite{Hyper-physcs}, and hence of the order $\delta^{-\alpha}$ with $\alpha$ between $\frac{4}{3}$ and $2$.
 
\item  If the tumor is a relatively malignant one and occupy bodies having diameter of order of few millimeters or even centimeters, 
then we end up with values of $\alpha$ of order $\frac{1}{5}, \frac{3}{5}$ and $1$ respectively for the aforementioned electric nanoparticles and between $\frac{2}{3}$ and $1$ for the iron oxide magnetic nanoparticles.
\end{enumerate}

\bigskip

This shows that for the detection of the tumors using such nanoparticles, we can model
the ratio of the relative electric permittivity and relative magnetic permeability as follows: 

\begin{definition}
We shall call $(D, \epsilon_1, \mu_1)$ an electromagnetic nanoparticle of shape $\partial D$, diameter of order $O(\delta), \delta<<1,$  and  permittivity and permeability $\epsilon_1, \mu_1$ respectively. 
\begin{enumerate}
 \item The particle $(D, \epsilon_1, \mu_1)$ is called an electric nanoparticle, relative to the background ($\epsilon_0, \mu_0$), if in addition we have 
 $\frac{\epsilon_1}{\epsilon_0}\sim \delta^{-\alpha}, \alpha>0, (\mbox{ i.e. } \frac{\epsilon_1}{\epsilon_0} >>1$) keeping the permeability moderate, i.e. $\frac{\mu_1}{\mu_0} \sim 1 $. In this case, the relative speed of propagation satisfies
 $\frac{\kappa^2_{1}}{\kappa^2_0}:= \frac{\epsilon_{1} \mu_{1}}{\epsilon_{0} \mu_{0}} \sim \delta^{-\alpha}$. 
 \item The particle $(D, \epsilon_1, \mu_1)$ is called a magnetic nanoparticle, relative to the background ($\epsilon_0, \mu_0$) if in addition we have $\frac{\mu_1}{\mu_0}\sim 
 \delta^{-\alpha}, \alpha>0, (\mbox{ i.e. } \frac{\mu_1}{\mu_0}>>1)$ keeping the permittivity moderate, i.e. $\frac{\epsilon_l}{\epsilon_0}\sim 1 $. Again in this case, the relative speed of propagation satisfies
 $\frac{\kappa^2_{1}}{\kappa^2_0} \sim \delta^{-\alpha}$.
\end{enumerate}
Here we use the notation $a \sim b $ to imply that there exists a positive constant $C $ such that $ C^{-1} \vert b \vert \leq \vert a \vert \leq C \vert b \vert $.
\end{definition}
Let us stress here that these electromagnetic nanoparticles have large relative speeds of propagation (or indices of refraction).\\
To highlight the relevance of each type of data that we use in stating the imaging problem, we briefly analyse their importance:
\begin{enumerate}
 \item {\it{Use of scattered data before injecting any nanoparticle.}} Assume that we have at hand the farfields $V^{\infty}(\hat{x}, d)$ in the case when $\hat{x}$ and $d$ are taken in the
whole $ \mathbb{S}^{2}$. The problem with such data is well
studied and there are  several algorithms to solve it , see \cite{Nachman:1988, Novikov:1988, Ramm:1988}. It is also known that this problem is 
very unstable. Precisely, the modulus of continuity is in general of logarithmic type, see \cite{MRAMAG-book1}.  

\item {\it{Use of scattered data before and after injecting one single nanoparticle at time.}}  In \cite{A-B-C-T-F:2008, A-C-D-R-T}, and the references therein, an approach of solving the problem using data 
collected before and after localized elastic perturbation has been proposed. 
However, their analysis goes also for the case of injecting a single nanoparticle at a time (precisely moderate nanoparticles, i.e.
both the electrical permittivity and the magnetic permeability are moderate in terms of the size of the particles (taking $\alpha=0$ above) ). 
It is divided into two steps. In the first one, from these far-fields, we can extract the total energies due to the medium $\epsilon_0$ in the 
interior of the support of $(\epsilon_0-1)$.  In the second step, we reconstruct $\epsilon_0(x)$ from 
these interior values of the energies. In contrast to 
the instability of the classical inverse scattering problem, the reconstruction from 
internal measurements is stable, see \cite{Alessandrini:2014, A-G-W:2012, H-M-N:2014, Triki:2010}. 
However, one needs to handle the invertibility properties of the energy mappings. One can think of using measurements related to multiple 
frequencies $\omega_1,\omega_2, etc.$, see \cite{Alberti:2013}, to invert such mappings.\\ 
Let us emphasize here that if we use electric (or magnetic) nanoparticles, instead of moderate nanoparticles, then we can extract, from the corresponding far-fields, the pointwise values
of the total fields and not only the energies, see Theorem \ref{Main-theorem-anal}. This is one reason why we introduced the concept of electric (or magnetic) nanoparticles. The second step is then
to extract the coefficients $\epsilon_0$ from these data. This can be done at the expense of a numerical differentiation. It is known that numerical differentiation is an unstable step but it is only moderately unstable.
Finally, observe that, we use the backscattered data, before and after injecting the particles, in only one and arbitrary direction $d$.
\end{enumerate}

We found that using electric nanoparticles (see the definition above), the electric far-field data are enough for the reconstruction. However, if we use magnetic nanoparticles, then, in addition to the electric far-fields, 
the electric near-fields are also required.
Similarly, using magnetic nanoparticles, the magnetic far-field data are enough for the reconstruction. However, if we use electric nanoparticles, then, in addition to the magnetic far-fields, 
the magnetic near-fields are also needed. 
Hence, as a moral, if we use electric (resp. magnetic) nanoparticles, it is better to use electric (resp. magnetic) far-fields. 
In both cases, the error of the reconstruction is fixed by the parameter $\alpha$ which is related to the kind of (relative permittivity or permeability of the)  nanoparticles used. 
In addition, the distance from the place where to collect the near-field data to the tumor is also fixed by the kind of the nanoparticles used. More details are provided in Remark \ref{Remark-on-measured-data}.
\bigskip

The remaining part of the paper is organized as follows. In Section $2$, we state the asymptotic expansion of the far-fields and near-fields generated by high contrast electric permittivity or magnetic permeability.
Then, we discuss how these expansions can be used to derive the needed formulas to solve the imaging problem. In section $3$, we give the details of the proof of the main theorem stated in section $2$.

\section{The asymptotic expansion of the fields and application to imaging}
\subsection{The asymptotic expansion with singular coefficients}
Let us recall our model:
\begin{equation}
 \begin{cases}
  \Delta V + \kappa_{0}^{2} V =0 \ \text{in} \ \mathbb{R}^{3}\diagdown \overline D, \\
   \Delta V + \kappa_{1}^{2} V =0 \ \text{in}\ D,\\
   \left.V \right\vert_{-}-\left.V \right\vert_{+}=0, \ \text{on} \ \partial D,\\
   \frac{1}{\mu_{1}}\left.\frac{\partial V}{\partial \nu^{}} \right\vert_{-}-\frac{1}{\mu_{0}} \left.\frac{\partial V}{\partial \nu^{}} \right\vert_{+} =0\ \text{on} \ \partial D,\\
   \frac{\partial V^{s}}{\partial \vert x \vert}-i \kappa_{0} V^{s} =o (\frac{1}{\vert x \vert}) , \ \vert x \vert \rightarrow \infty \ (S.R.C),
 \end{cases}
\label{model4}
\end{equation}
where $\kappa_{0}^{2}=\omega^{2} \epsilon_{0} \mu_{0}$, $\kappa_{1}^{2}=\omega^{2} \epsilon_{1} \mu_{1}$ and $V^{s}$ denotes the scattered field.\\ 
%\bigskip

In the case when $\epsilon_0$ is a constant every where in $\mathbb{R}^{3}$, we use the notation $u$ instead of $V$ in \eqref{model4}. The solution $u$ can then be represented as
\begin{equation}
u(x)=\begin{cases}
      u^{I}+ S^{\kappa_{0}}_{D} \phi(x), \ x \in \mathbb{R}^{3}\diagdown \overline D,   \\
       S^{\kappa_{1}}_{D} \psi(x) , \ x \in D,           \end{cases}
\notag
\end{equation}
where $u^I$ denotes the incident wave and $\phi,\psi $ are appropriate densities (see \cite{Ammari-Kang-2}).\\
%\bigskip
When $\epsilon_{0}$ is variable, we can represent $V$ as 
\begin{equation}
 V=\begin{cases}
U^t+S_{D}^{\kappa_0}\phi, \ \text{in } \ \mathbb{R}^{3}\backslash{ \overline{D}}, \\
S_{D}^{\kappa_1}\psi, \ \text{in } D, 
 \end{cases}
\label{variable-eps-model-single-repV}
\end{equation}
where 
\begin{eqnarray}
S_{D}^{\kappa_0}\phi&=&\int_{\partial D}G^{\kappa_0}(x,y)\phi(y) ds(y),\nonumber\\
S_{D}^{\kappa_1}\psi&=&\int_{\partial D}\Phi^{\kappa_1}(x,y)\psi(y) ds(y),\nonumber
\end{eqnarray}
and $U^t$ is the solution of the following scattering problem;
\begin{equation}
 \begin{cases}
 ( \Delta + \kappa_{0}^{2}(x)) U^t =0 \ \text{in} \ \mathbb{R}^{3}, \\
   \frac{\partial U^{s}}{\partial \vert x \vert}-i \kappa_{0} U^{s} =o (\frac{1}{\vert x \vert}) , \ \vert x \vert \rightarrow \infty \ (S.R.C),\\
   U^t(x,d)=U^s(x,d)+e^{i\kappa_o{x}\cdot{d}}.
 \end{cases}
\label{variable-eps-model-single-background}
\end{equation}
From \eqref{variable-eps-model-single-repV}, we get 
\begin{equation}
 V^\infty(\hat{x})=
U^\infty(\hat{x})+\int_{\partial D}(G^{\kappa_0})^{\infty}(\hat{x},y)\phi(y) ds(y),
\label{variable-eps-model-single-repVU-far-1}
\end{equation}
where  $(G^{\kappa_0})^{\infty}(\hat{x},y)$ is the far-field of the Green's function $G^{\kappa_0}(x,y)$ of the problem $\eqref{variable-eps-model-single-background}$. The mixed reciprocity condition implies that
$$(G^{\kappa_0})^{\infty}(\hat{x},y)= U^t(y,-\hat{x}).$$
From \eqref{variable-eps-model-single-repVU-far-1}, we have
\begin{equation}
 V^\infty(\hat{x},d)=
U^\infty(\hat{x},d)+ U^t(z,-\hat{x})\int_{\partial D}\phi+ \nabla{U^t}(z,-\hat{x})\cdot\int_{\partial D}(y-z)\phi(y) ds(y)+\mathcal{O}(\delta^3\|\phi\|)
\label{variable-eps-model-single-repVU-far-2}
\end{equation}
and from \eqref{variable-eps-model-single-repV}, we have
\begin{equation}
 V^s(x,d)=
U^s(x,d)+ G^{\kappa_0}(x,z)\int_{\partial D}\phi+ \nabla{G^{\kappa_0}}(x,z)\cdot\int_{\partial D}(y-z)\phi(y) ds(y)+\mathcal{O}(d^{-3}(x, z)\delta^3\|\phi\|)
\label{variable-eps-model-single-repV-2}
\end{equation}
where $d(x, z)$ is the Euclidean distance from $x$ to $z$.

In the following theorem, we state the asymptotic expansion of the far-fields and the near-fields in terms of $\delta$ at the first order.
%{\color{red}{
\begin{theorem}\label{Main-theorem-anal}
Assume that $D$ is a Lipschitz domain, $\mu_0, \mu_1$ and $\epsilon_1$ are positive constants while $\epsilon_0$ is a Lipschitz regular function and equals $1$ outside a bounded and measurable set $\Omega$. 
In addition, the coefficients $\epsilon_0$ and $\mu_0$ are assumed to be uniformly bounded in terms of the diameter $\delta$ of $D$. 
Then, at a fixed frequency $\omega$, we have the following asymptotic expansions for the far-fields and the near-fields as $\delta \rightarrow 0$:
\begin{comment}
\footnote{The expansion formulas for the far-fields (respectively for the near-fields) can be stated in one form by modeling the permeability $\mu_1$ as 
$\mu_1 \sim \delta^\alpha$ for $\alpha \in (-\infty, 1)$. We chose to split these formulas into two parts to make them consistent with our definitions of electric and magnetic nanoparticles.}
\end{comment}
 \begin{enumerate} 
  \item \textbf{Far-fields:} 
  In the case of electric nanoparticles, 
  \begin{equation}\label{Elc-nano-far-1}
 \begin{aligned}
 V^{\infty}(\hat{x}, d)-U^{\infty}(\hat{x}, d) &= \omega^{2} \mu_{0} (\epsilon_{1}-\epsilon_{0}) U^{t}(z, d)U^t(z,-\hat{x}) \delta^{3} \vert B \vert \\
&\quad -\nabla_{z} U^t(z,-\hat{x}) \Big(\sum^{3}_{i=1} \partial_{i} U^{t}(z, d) \delta^{3} \int_{\partial B}y^j[\lambda Id+(K_{B}^0)^*]^{-1}(\nu_x\cdot \nabla x^i)(y)\  ds(y) \Big) +\mathcal{O}(\delta^{4-\alpha}),  
\end{aligned}
\end{equation}
where $ 0<\alpha<1$,  and in the case of magnetic nanoparticles,
\begin{equation}\label{Elc-nano-far-2}
\begin{aligned}
V^{\infty}(\hat{x}, d)-U^{\infty}(\hat{x}, d) &= \omega^{2} \mu_{0} (\epsilon_{1}-\epsilon_{0}) U^{t}(z, d)U^t(z,-\hat{x}) \delta^{3} \vert B \vert \\
&\quad -\nabla_{z} U^t(z,-\hat{x}) \Big(\sum^{3}_{i=1} \partial_{i} U^{t}(z, d) \delta^{3}  \int_{\partial B}y^j[\lambda Id+(K_{B}^0)^*]^{-1}(\nu_x\cdot \nabla x^i)(y)\  ds(y)\Big) +\mathcal{O}(\delta^{4} \vert log \ \delta \vert),  
 \end{aligned}
\end{equation}
where $0<\alpha<1 $.
  \item \textbf{Near-fields:} 
  \begin{equation}\label{Elc-nano-near-al1}
  \begin{aligned}
V^s(x, d)-U^s(x, d)&=G^{\kappa_0}(x, z)[\omega^2 \mu_0 (\epsilon_1-\epsilon_0)U^t(z,d)\delta^3 \vert B \vert+ \mathcal{O}(\delta^{4} \vert log \ \delta \vert) ]\\ 
&-\nabla G^{\kappa_0}(x, z)\cdot ( M \nabla U^t(z,d))\delta^3 +\mathcal{O}(d^{-2}(x,z) \delta^{4-\alpha})
+\mathcal{O}(d^{-3}(x,z) \delta^{4}),\ 0<\alpha<1,
\end{aligned}
\end{equation}
in the case of electric nanoparticles, and
\begin{equation}\label{Elc-nano-near}
  \begin{aligned}
V^s(x, d)-U^s(x, d)&=G^{\kappa_0}(x, z)[\omega^2 \mu_0 (\epsilon_1-\epsilon_0)U^t(z,d)\delta^3 \vert B \vert+ \mathcal{O}(\delta^{4} \vert log \ \delta \vert) ]\\ 
&-\nabla G^{\kappa_0}(x, z)\cdot ( M \nabla U^t(z,d))\delta^3 + \mathcal{O}(d^{-2}(x,z) \delta^{4+\alpha} \vert log \ \delta \vert) +\mathcal{O}(d^{-3}(x,z) \delta^{4}), \ 0<\alpha<1,
\end{aligned}
\end{equation}
in the case of magnetic nanoparticles.
\end{enumerate}
Here $M:=(M_{ij})_{i,j=1}^3, M_{ij}:=\int_{\partial B}y^j[\lambda Id+(K_{B}^0)^*]^{-1}(\nu_x\cdot \nabla x^i)(y)\  ds(y)$ is the polarization tensor and $\lambda=\frac{1}{2}\frac{\mu_0+\mu_1}{\mu_0-\mu_1} $.
\end{theorem}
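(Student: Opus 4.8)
The plan is to start from the layer-potential representation \eqref{variable-eps-model-single-repV} and turn the two transmission conditions on $\partial D$ into a coupled system of boundary integral equations for the densities $\phi$ and $\psi$. Using continuity of the single-layer potential and the standard jump relations $\partial_\nu S_D^{\kappa}\phi|_{\pm}=(\pm\tfrac12 I+(K_D^{\kappa})^*)\phi$, the continuity condition reads $S_D^{\kappa_1}\psi=U^t+S_D^{\kappa_0}\phi$ on $\partial D$, while the flux condition reads $\tfrac{1}{\mu_1}(-\tfrac12 I+(K_D^{\kappa_1})^*)\psi=\tfrac{1}{\mu_0}[\partial_\nu U^t+(\tfrac12 I+(K_D^{\kappa_0})^*)\phi]$. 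Eliminating $\psi$ reduces the system to a single equation for $\phi$ whose principal part is governed by $\lambda I+(K_D^{\kappa_0})^*$, with $\lambda=\tfrac12\tfrac{\mu_0+\mu_1}{\mu_0-\mu_1}$; this is exactly the operator that appears in the polarization tensor $M$.

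Next I would rescale to the reference body $B$ via $x=z+\delta\xi$ and expand the Helmholtz layer operators in powers of $\delta\kappa_j$. The leading operators are the static ones $S_B^0$ and $(K_B^0)^*$, with corrections controlled by $(\delta\kappa_0)^2$ on the outside and $(\delta\kappa_1)^2\sim\delta^{2-\alpha}$ on the inside; since $0<\alpha<1$ one has $\delta\kappa_1\sim\delta^{1-\alpha/2}\to 0$, so these Neumann series converge and produce a genuine asymptotic expansion. Solving the rescaled system order by order then yields the two moments entering \eqref{variable-eps-model-single-repVU-far-2} and \eqref{variable-eps-model-single-repV-2}. The zeroth moment $\int_{\partial D}\phi$ is extracted by pairing the reduced equation with the constant function, which is the eigenfunction of $K_B^0$ at eigenvalue $\tfrac12$; together with the interior equation this produces the monopole term $\omega^2\mu_0(\epsilon_1-\epsilon_0)U^t(z,d)\delta^3|B|$, whose coefficient is the Helmholtz jump $\kappa_1^2-\kappa_0^2$ in the electric regime. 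The first moment $\int_{\partial D}(y-z)\phi(y)\,ds(y)$, obtained by applying $[\lambda I+(K_B^0)^*]^{-1}$ to the normal trace $\nu\cdot\nabla U^t(z)$, produces the dipole term carrying $M$. Substituting into \eqref{variable-eps-model-single-repVU-far-2} gives the far-field expansion and into \eqref{variable-eps-model-single-repV-2} the near-field expansion, the latter inheriting the weights $d^{-2}(x,z)$ and $d^{-3}(x,z)$ from $\nabla G^{\kappa_0}$ and its second derivatives.

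The contrast is what separates the two cases and fixes the remainders. For an electric nanoparticle $\mu_1/\mu_0\sim 1$, so $\lambda$ stays bounded away from the spectrum of $(K_B^0)^*$ and $[\lambda I+(K_B^0)^*]^{-1}$ is uniformly bounded; the only large parameter is $\epsilon_1\sim\delta^{-\alpha}$, which multiplies the monopole so that it dominates at order $\delta^{3-\alpha}$, and the leading uncontrolled correction is $\mathcal{O}(\delta^{4-\alpha})$. For a magnetic nanoparticle $\mu_1/\mu_0\sim\delta^{-\alpha}$ forces $\lambda+\tfrac12=\tfrac{\mu_0}{\mu_0-\mu_1}\sim-\delta^{\alpha}\to 0$, so $\lambda$ approaches the endpoint $-\tfrac12$ of the spectrum of $(K_B^0)^*$; the inverse $[\lambda I+(K_B^0)^*]^{-1}$ then blows up, enhancing the dipole term and degrading the remainder to $\mathcal{O}(\delta^{4}|\log\delta|)$, the logarithm reflecting the near-resonance.

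I expect the main obstacle to be precisely this uniform control of $[\lambda I+(K_D^{\kappa_0})^*]^{-1}$ together with sharp a priori estimates on $\phi$ and $\psi$ in the high-contrast regime: one must show the rescaled system is invertible with its inverse controlled uniformly in $\delta$ in the electric case and with a quantified blow-up in the magnetic case, so that the formally computed moments are the true leading terms and the neglected contributions are of the claimed orders. Controlling the interior density $\psi$ is delicate because $\kappa_1\sim\delta^{-\alpha/2}$ is large, so one needs estimates for $S_D^{\kappa_1}$ and $(K_D^{\kappa_1})^*$ that are uniform as $\delta\kappa_1\to 0$ while $\kappa_1\to\infty$; this is where the bulk of the technical work lies. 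The natural tool is the spectral theory of $(K_B^0)^*$ (self-adjointness in the $H^{-1/2}$ pairing, the simple eigenvalue at $\tfrac12$, and accumulation of the spectrum toward the endpoints), which should make the magnetic-case blow-up quantitative and produce the $|\log\delta|$ factor.
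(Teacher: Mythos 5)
Your overall skeleton (jump relations, elimination of $\psi$, rescaling to $B$, Neumann series in $\delta\kappa_j$, extraction of the zeroth and first moments of $\phi$, substitution into \eqref{variable-eps-model-single-repVU-far-2} and \eqref{variable-eps-model-single-repV-2}) matches the paper's strategy, but two of your central mechanisms are wrong. First, the magnetic case: you claim $\lambda\to-\tfrac12$ makes $[\lambda Id+(K_B^0)^*]^{-1}$ blow up and that the $\vert\log\delta\vert$ in \eqref{Elc-nano-far-2} and \eqref{Elc-nano-near} is a near-resonance effect. The opposite is true. Since $\lambda+\tfrac12=\tfrac{\mu_0}{\mu_0-\mu_1}=\mathcal{O}(\delta^{\alpha})$ and $-\tfrac12 Id+(K_D^0)^*$ is invertible (this is exactly the first case in the paper's Lemma \ref{estimate-in-lambda}), the resolvent is \emph{uniformly bounded} in the magnetic regime, and the polarization tensor converges: $M=-\tfrac32\vert B\vert Id+\mathcal{O}(\delta^\alpha)$, so the dipole is not enhanced. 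The logarithm has an entirely different source, which your proof never touches: the background $\epsilon_0$ is variable, so the representation uses the Green function $G^{\kappa_0}$ of the variable-coefficient operator, and $\nabla_x\bigl(G^{\kappa_0}(x,z)-\tfrac{1}{4\pi\vert x-z\vert}\bigr)$ has a $\log\tfrac{1}{\vert x-z\vert}$ singularity; this yields $\Vert S_D^{\kappa_0}-S_D^0\Vert_{\mathcal{L}(L^2(\partial D),H^1(\partial D))}=\mathcal{O}(\delta^2\vert\log\delta\vert)$ instead of $\mathcal{O}(\delta^2)$, and this is what propagates into the remainders. A telltale sign that your mechanism cannot be right: the $\mathcal{O}(\delta^4\vert\log\delta\vert)$ also appears in the \emph{electric} near-field formula \eqref{Elc-nano-near-al1}, where on your account there is no resonance at all; in the electric far field it is merely dominated by $\delta^{4-\alpha}$.

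Second, your monopole coefficient is wrong at leading order. You state it is the Helmholtz jump $\kappa_1^2-\kappa_0^2=\omega^2(\epsilon_1\mu_1-\epsilon_0\mu_0)$, whereas the transmission condition carries the weights $1/\mu_j$, and pairing with constants (Green's identity, as in the paper's problems P2 and P6, the latter resting on the estimate $\int_D S^{\kappa_1}_D\psi\,dx=u^I(z)\vert D\vert+\mathcal{O}(\delta^4)$) produces $\tfrac{\mu_0}{\mu_1}\kappa_1^2-\kappa_0^2=\omega^2\mu_0(\epsilon_1-\epsilon_0)$. The discrepancy is $\omega^2\epsilon_1(\mu_1-\mu_0)\delta^3=\mathcal{O}(\delta^{3-\alpha})$ when $\mu_1\neq\mu_0$, i.e.\ of the \emph{same order as the leading term}, so this is not a cosmetic slip. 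Finally, the a priori control you correctly identify as the main obstacle is resolved in the paper not by spectral theory of $(K_B^0)^*$ but by an interior $H^2$ estimate on $W_{\kappa_1}-u^I$ using the Faber--Krahn/Poincar\'e bound $\lambda_1(D)\geq C_1(B)\delta^{-2}$, which beats $\kappa_1^2\sim\delta^{-\alpha}$ precisely because $\alpha<1$ (in fact $\alpha<2$ there; $\alpha<1$ is forced by the Neumann series for $(S_D^{\kappa_1})^{-1}S_D^{\kappa_0}$), yielding $\Vert\phi\Vert_{L^2(\partial D)}=\mathcal{O}(\delta)$ and $\Vert\psi\Vert_{L^2(\partial D)}=\mathcal{O}(1)$; your proposed route via quantified resolvent blow-up would not produce these bounds, because there is no blow-up to quantify.
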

%}}
%
\subsection{Application to imaging}
%{\color{red}{
For simplicity of exposition, we consider $B$ to be a ball.
\begin{enumerate}
\item \textbf{Using electric nanoparticles.} 
In this case, we deduce from (\ref{Elc-nano-far-1}) that
\begin{equation}\label{inv1-1}
V^\infty(\hat{x},d)-U^\infty(\hat{x},d)=U^t(z,-\hat{x})U^t(z,d)\omega^2\mu_0\delta^{3-\alpha}|B|+\mathcal{O}(\delta^3).
\end{equation}
%We choose $\alpha=\frac{1}{2}$ ( or any $\alpha \in(0,1)$ so that $3-\alpha<4-2\alpha $). 
%Then
%\begin{equation}\label{inv1-2}
%V^\infty(\hat{x},d)-U^\infty(\hat{x},d)=\omega^2\mu_0\vert{B}\vert {U}^t(z,-\hat{x})U^t(z,d)\delta^{\frac{5}{2}}+\mathcal{O}(\delta^{3} \vert log \ \delta \vert).
%\end{equation}
In particular for $\hat{x}=-d$, we get 
\begin{equation}\label{inv1-3}
V^\infty(-d,d)-U^\infty(-d,d)=\omega^2\mu_0\vert{B}\vert (U^t(z,d))^2\delta^{^{3-\alpha}}+\mathcal{O}(\delta^3).
\end{equation}
Assume that we have at hand the farfield measured before using the nanoparticles, i.e. $U^\infty(\hat{x},d)$, and after using the nanoparticles, i.e. $V^\infty(\hat{x},d)$. 
Then from (\ref{inv1-3}), we can compute ${\pm}U^t(z,d)$ with an error of the order $\mathcal{O}(\delta^\alpha)$.
By numerical differentiation we can compute ${\Delta}U^t(z)$, in the regions where $U^t(z,d)$ does not vanish, and hence 
\begin{equation}\label{comp-k_0}
\kappa^2_0(z):=-\frac{{\Delta}U^t(z,d)}{U^t(z,d)}. 
\end{equation}
Observe that, we use the backscattering data for only one incident direction $d$. 
\item \textbf{Using magnetic nanoparticles.}
In this case, we deduce from (\ref{Elc-nano-far-2}) that
\begin{equation}\label{inv2-1}
\begin{aligned}
V^\infty(\hat{x},d)-U^\infty(\hat{x},d)&=\omega^2(\epsilon_{1}-\epsilon_0)\mu_0U^t(z,-\hat{x})U^t(z,d)\delta^{3}|B| \\
&\qquad +m_0 \delta^{3}\nabla U^t(z,-\hat{x})\cdot{|B|}{\nabla}U^t(z,d)+\mathcal{O}(\delta^{3+\alpha}+\delta^{4} \vert log \ \delta \vert) \\
&=\omega^2(\epsilon_{1}-\epsilon_0)\mu_0U^t(z,-\hat{x})U^t(z,d)\delta^{3}|B|+m_0 \delta^{3}\nabla U^t(z,-\hat{x})\cdot{|B|}{\nabla}U^t(z,d)+\mathcal{O}(\delta^{3+\alpha}),
\end{aligned}
\end{equation}
where $m_0$ is related to the polarization tensor $M$ as $M=m_0 \vert B\vert Id +\mathcal{O}(\delta^{\alpha})$. Following \cite{Ammari-Kang-1} (Page $83$) and using the fact that $ \lambda= \frac{1}{2} \frac{\mu_{0}+\mu_{1}}{\mu_{0}-\mu_{1}}$, we can derive that $m_{0}=-\frac{3}{2} $ \footnote{From \cite{Ammari-Kang-1}, it follows that $\lambda= \frac{k+1}{2(k-1)} \Rightarrow  2 \lambda-1= \frac{k+1}{k-1}-1=\frac{2}{k-1} \quad
\Rightarrow k= \frac{2}{2\lambda-1} +1=\frac{2\lambda+1}{2 \lambda-1}. $ Therefore $k-1=\frac{2}{2\lambda-1},\ k+2= \frac{2\lambda+1}{2 \lambda-1}+2= \frac{6 \lambda-1}{2\lambda-1}. $ Also $\lambda-\frac{1}{6}=\frac{\mu_{0}+2\mu_{1}}{3(\mu_{0}-\mu_{1})}. $ Therefore $M:=\frac{3(k-1)}{k+2} \vert B \vert Id =\frac{6}{6\lambda-1} \vert B \vert Id= \frac{1}{\lambda-\frac{1}{6}} \vert B \vert Id =\frac{3(\mu_{0}-\mu_{1})}{\mu_{0}+2\mu_{1}} \vert B \vert Id. $ Now we can write $\frac{\mu_{0}-\mu_{1}}{\mu_{0}+2\mu_{1}}= \frac{\mu_{0}+2\mu_{1}}{\mu_{0}+2\mu_{1}}-\frac{3\mu_{1}}{\mu_{0}+2\mu_{1}}=1-\frac{3}{\frac{\mu_{0}}{\mu_{1}}+2}=-\frac{1}{2}+\mathcal{O}(\delta^{\alpha}) $ which gives us the required expansion and the value of $m_{0}$.}.\\
From (\ref{Elc-nano-near}) we deduce that
\begin{equation}\label{inv2-2}
\begin{aligned}
V^s(x, d)-U^s(x, d)&=G^{\kappa_0}(x, z)[\omega^2 \mu_0 (\epsilon_1-\epsilon_0)U^t(z,d)\delta^3 \vert B \vert +\mathcal{O}(\delta^4 \vert log \ \delta \vert)]\\ 
&\quad -\nabla G^{\kappa_0}(x, z)\cdot ( M \nabla U^t(z,d))\delta^3 + \mathcal{O}(d^{-2}(x,z) \delta^{4+\alpha} \vert log \ \delta \vert) 
+\mathcal{O}(d^{-3}(x,z) \delta^{4}) \\
&=G^{\kappa_0}(x, z)\Big[ \omega^2 \mu_0 (\epsilon_{1}-\epsilon_0) U^t(z,d)\delta^3 \vert B \vert+\mathcal{O}(\delta^4 \vert log\ \delta \vert) \Big] \\
& -\nabla G^{\kappa_0}(x, z) m_{0} \nabla U^t(z,d) \vert B \vert \delta^3+ \nabla G^{\kappa_0}(x, z)  \nabla U^t(z,d) \delta^{3+\alpha} 
 + \mathcal{O}(d^{-2}(x,z) \delta^{4+\alpha} \vert log \ \delta \vert)\\
 &\qquad +\mathcal{O}(d^{-3}(x,z) \delta^{4})\\
&=\omega^{2} \mu_{0} (\epsilon_{1}-\epsilon_{0}) U^{t}(z,d) \vert B \vert \frac{\delta^{3}}{4 \pi d(x,z)}+\mathcal{O}(\delta^{3}) +\mathcal{O}(\delta^{3} d(x,z)) \\
&\quad +\mathcal{O}(d^{-1}(x,z)\delta^4 \vert log\ \delta \vert)+\mathcal{O}(\delta^4 \vert log\ \delta \vert)+\mathcal{O}(d(x,z) \delta^4 \vert log\ \delta \vert)\\
&\quad -m_{0} \nabla U^{t}(z,d) \vert B \vert \frac{(x-z) \delta^{3}}{4 \pi d^{3}(x,z)}+ \mathcal{O}(\delta^{3})\\
&\quad +\nabla U^{t}(z,d) \frac{(x-z) \delta^{3+\alpha}}{4 \pi d^{3}(x,z)}+\mathcal{O}(\delta^{3+\alpha})
 + \mathcal{O}(d^{-2}(x,z) \delta^{4+\alpha} \vert log \ \delta \vert) +\mathcal{O}(d^{-3}(x,z) \delta^{4}),
\end{aligned}
\end{equation}
using the facts that when $x$ is close to $z$, see \cite{S-1},
\begin{equation}
\begin{aligned}
G^{k_{0}}(x,z)&=\frac{1}{4 \pi d(x,z)}+\frac{i}{4 \pi} k_{0}(z)+\mathcal{O}(d(x,z)), \\
\nabla G^{k_{0}}(x,z)&=\frac{x-z}{4 \pi d^{3}(x,z)}+\mathcal{O}(1) .
\end{aligned}
\notag
\end{equation}
Assume that we can measure the far-fields and the near fields before as well as after using the nanoparticles. 
\begin{enumerate}
\item 
From \eqref{inv2-1}, we can reconstruct 
$\omega^{2} \vert B \vert (\epsilon_{1}-\epsilon_0) \mu_0\left(U^t(z,d)\right)^2+m_0\vert B\vert\nabla{U}^t(z,d)\cdot\nabla{U}^t(z,d)$ with an error of the order $\delta^{\alpha}$ using one direction of incidence $d$ and the corresponding backscattering 
direction $\hat{x}=-d$.
 \item Multiplying \eqref{inv2-2} by $\frac{d^2(x, z)}{\delta^3}$ we get
 \begin{equation}\label{inv2-3}
 \begin{aligned}
\frac{d^2(x, z)}{\delta^3}[V^s(x,d)-U^s(x,d)]&=-m_{0} \nabla U^{t}(z,d) \vert B \vert \cdot \frac{(x-z)}{4\pi d(x,z)}+ \nabla U^{t}(z,d)\cdot \frac{(x-z)}{4 \pi d(x,z)} \delta^{\alpha} \\
&\quad +\omega^{2} \mu_{0} (\epsilon_{1}-\epsilon_{0}) U^{t}(z,d) \frac{\vert B \vert}{4 \pi} d(x,z)  \\
&\quad +\mathcal{O}(d^{2}(x,z)) +\mathcal{O}(\delta^{\alpha} d^{2}(x,z)) + \mathcal{O}(d^{3}(x,z))\\ 
&\quad +\mathcal{O}(\delta^{1+ \alpha} \vert log \ \delta \vert)+\mathcal{O}(\frac{\delta}{d(x,z)})+\mathcal{O}(d(x,z) \delta \vert log\ \delta\vert).
\end{aligned}
\end{equation}
Thus from \eqref{inv2-2}, we can reconstruct $\nabla{U}^t(z,d)$ with an error of the order $\delta^{\min\{\alpha,1-\alpha\}}$ if $d(x, z) \sim \mathcal{O}(\delta^\alpha), 0<\alpha<1$.
\end{enumerate}
Hence, we can reconstruct $(\epsilon_{1}-\epsilon_0)\left(U^t(z,d)\right)^2$ with an error of the order $\delta^{\min\{\alpha,1-\alpha\}}$ if we use the far-field data and the near fields collected at a distance, from the tumor location, of the order 
$d(x, z) \sim \mathcal{O}(\delta^\alpha)$.\\
Let now $K$ be a smooth region where we have sent the nanoparticles. Knowing $\nabla U^t(z,d)$, for a sample of points $z \in K$, we can recover $-\omega^2 \mu_0\epsilon_0(z)U^t(z,d)$ (and hence $\epsilon_0U^t(z,d)$) by taking the divergence of $\nabla U^t(z,d)$  as $\nabla \cdot \nabla U^t(z,d) =-\omega^2 \mu_0\epsilon_0(z)U^t(z,d)$.\\
Combining $\epsilon_{0} U^t(z,d)$ and $(\epsilon_{1}-\epsilon_0)\left(U^t(z,d)\right)^2$ we can recover $\epsilon_0(z)$.
For instance, in the case $\epsilon_1>\epsilon_0(z),\ z\in \Omega $, we can consider the second-order polynomial equation $ A\epsilon_{0}^{2}-(\epsilon_1-\epsilon_0)B^2=0$ where $A=(\epsilon_1-\epsilon_0) (U^t(z,d))^2 $ and $B=\epsilon_0 U^{t}(z,d) $. It is easy to see that this equation has a positive root $\epsilon_0 $ since $\epsilon_1>\epsilon_0 $.

\end{enumerate}

In both cases, the error (which is $\mathcal{O}(\delta^\alpha)$, where $\alpha \in (0, 1)$, using electric nanoparticles, or $\mathcal{O}(\delta^{\min\{\alpha,1-\alpha\}})$, for $\alpha \in (0,1)$,
using magnetic nanoparticles) propagates through the numerical differentiation which is a moderately unstable step, i.e. with a loss of a polynomial rate of the error.
%}}
\begin{comment}
\begin{remark}
The error estimate $\delta^{\min\{\alpha, 1-\alpha\}}$ derived above requires that $\alpha<1 $ for the power of $\delta $ to be positive. 
To deal with the case when $\alpha > 1$, we can proceed as above and reconstruct $\nabla{U}^t(z,d)$ with an error of the order $d^{\min\{2\alpha,1 \}}(x, z)$ if we choose $\delta \sim  O(d^2(x, z))$. 
Then arguing similarly, we can recover $\epsilon_0(z)$ with an error of the order $\min\{d^{\min\{2\alpha,1\}}(x, z), \delta^{\alpha}\}=\delta^{\min\{1/2, \alpha\}}$. Hence, we get a controlable, and small, error for any value of $\alpha>0$. However, 
observe that we need to impose the distance where to measure the nearfields of the order $d(x, z)\sim \delta^{1/2}$ while for the case above we have $d(x, z)\sim \delta^{\alpha}$ which can be relatively large if $\alpha$ is small. 
Finally, we see that both error values coincide for $\alpha=1/2$.
\end{remark}
\end{comment}

\begin{remark}\label{Remark-on-measured-data}

We observe that to get the reconstruction of the coefficients $\epsilon_0$
\begin{enumerate}
 \item using electric nanoparticles, we need to measure the far-fields before and after the injection of these particles.
However, we only need one single backscattered direction $d \in \mathbb{S}^2$.

\item using magnetic nanoparticles, we need to measure the far-fields before and after the injection of these particles in one single backscattered direction $d\in \mathbb{S}^2$ and
the near-fields in three different points $x_j, j=1, 2, 3$ such that the unit vectors $\frac{(x_j-z)}{d(x_j, z)}$ are linearly independent.
\end{enumerate}
In both cases, the error of the reconstruction is fixed by the parameter $\alpha$ which is related to the kind of (relative permittivity or permeability of the)  nanoparticles used. 
The distance from the place where to collect the near-field data to the tumor (which is of the order $\delta^{\alpha}$ for $\alpha \in (0,1)$) is also fixed by the kind of magnetic nanoparticles used.
 
\end{remark}
\section{Proof of Theorem \ref{Main-theorem-anal}}
%{\color{red}{
We observe that in the scattering problem \eqref{model4}, the actual parameters modeling the nanoparticles are $\mu_1 $ and $\kappa_{1}^{2}(:= \omega^{2}  \epsilon_1 \mu_1) $. 
\begin{comment}
In the ensuing analysis, for a better exposition of the roles played by $\mu_1 $ and $\kappa_1 $, we use the notations $\mu_{1} \sim \delta^{-\beta} $ and $\kappa_{1}^{2} \sim \delta^{-\alpha} $, where $\alpha,\beta $ are non-negative real numbers. Therefore the case of an electric nanoparticle corresponds to $\alpha>0,\ \beta=0 $ and the case of a magnetic nanoparticle corresponds to $\alpha=\beta >0 $.
\end{comment}
To handle in one shot both the electric and magnetic nanoparticles, we take $\mu_{1} \sim \delta^{-\beta},\ \beta \geq 0$ and $\kappa_{1}^{2} \sim \delta^{-\alpha} $. Then, taking $\beta=0 $ and $\alpha>0 $ means that we deal with electric nanoparticles. The case $\beta=\alpha $ covers the case of magnetic nanoparticles. 
%}} 
\subsection{The case when $\epsilon_0$ is a constant}
First, we recall that in the case where $\epsilon_0$ is a constant every where in $\mathbb{R}^{3}$, we use the notation $u$ instead of $V$ and that the solution $u$ can be represented as
\begin{equation}
u(x)=\begin{cases}
      u^{I}+ S^{\kappa_{0}}_{D} \phi(x), \ x \in \mathbb{R}^{3}\diagdown \overline D,   \\
       S^{\kappa_{1}}_{D} \psi(x) , \ x \in D.           \end{cases}
\label{sol1}
\end{equation}
By the jump relations, the densities $\phi$ and $\psi$ are solutions of the system of integral equations
\begin{equation}
\begin{aligned}
& S^{\kappa_{1}}_{D} \psi- S^{\kappa_{0}}_{D} \phi = u^{I},\\
& \frac{1}{\mu_{1}} \Big[\frac{1}{2} Id+(K^{\kappa_{1}}_{D})^{*}\Big] \psi -\frac{1}{\mu_{0}} \Big[-\frac{1}{2} Id + (K^{\kappa_{0}}_{D})^{*} \Big]\phi =\frac{1}{\mu_{0}} \frac{\partial u^{I}}{\partial \nu}.
\end{aligned}
 \label{sing-obs-1}
\end{equation}
%
%
%
%
%\subsection{Considering the positive fundamental solution}
Using the fact that $[-\frac{1}{2} Id+(K^{\kappa_{0}}_{D})^{*}]$ is invertible, from the second equation of \eqref{sing-obs-1}, we have 
\begin{equation}
\phi=\frac{\mu_{0}}{\mu_{1}} \Big[-\frac{1}{2} Id+(K^{\kappa_{0}}_{D})^{*} \Big]^{-1} \Big[\frac{1}{2} Id+(K^{\kappa_{1}}_{D})^{*} \Big] \psi-\Big[-\frac{1}{2} Id+(K^{\kappa_{0}}_{D})^{*} \Big]^{-1} \frac{\partial u^{I}}{\partial \nu}.
\label{sing-obs-2}
\end{equation}
From the first identity, we have 
\begin{equation}
 \psi=(S^{\kappa_{1}}_{D})^{-1} u^{I}+(S^{\kappa_{1}}_{D})^{-1} S^{\kappa_{0}}_{D} \phi.
\label{sing-obs-3}
\end{equation}
Substituting this in \eqref{sing-obs-2}, we obtain
\begin{equation}
\begin{aligned}
\phi&=\frac{\mu_{0}}{\mu_{1}} \Big[-\frac{1}{2} Id+(K^{\kappa_{0}}_{D})^{*} \Big]^{-1} \Big[\frac{1}{2} Id+(K^{\kappa_{1}}_{D})^{*} \Big] (S^{\kappa_{1}}_{D})^{-1} u^{I} \\
&\ \ +\frac{\mu_{0}}{\mu_{1}} \Big[-\frac{1}{2} Id+ (K^{\kappa_{0}}_{D})^{*} \Big]^{-1} \Big[\frac{1}{2} Id+(K^{\kappa_{1}}_{D})^{*} \Big] (S^{\kappa_{1}}_{D})^{-1} S^{\kappa_{0}}_{D} \phi \\
&\ \ -\Big[-\frac{1}{2} Id+(K^{\kappa_{0}}_{D})^{*} \Big]^{-1} \frac{\partial u^{I}}{\partial \nu}, 
\end{aligned}
\notag
\end{equation}
which implies
\begin{equation}
\begin{aligned}
(Id-A S^{\kappa_{0}}_{D}) \phi= A u^{I}- [-\frac{1}{2} Id+(K^{\kappa_{0}}_{D})^{*}]^{-1} \frac{\partial u^{I}}{\partial \nu},
\end{aligned}
\label{sing-obs-4}
\end{equation}
where $A=\frac{\mu_{0}}{\mu_{1}} \Big[-\frac{1}{2} Id+(K^{\kappa_{0}}_{D})^{*} \Big]^{-1} \Big[\frac{1}{2} Id+(K^{\kappa_{1}}_{D})^{*} \Big] (S^{\kappa_{1}}_{D})^{-1} $. Therefore
\begin{equation}
\phi= \Big[Id-A S^{\kappa_{0}}_{D} \Big]^{-1} A u^{I}- \Big[Id-A S^{\kappa_{0}}_{D} \Big]^{-1} \Big[-\frac{1}{2} Id+ (K^{\kappa_{0}}_{D})^{*} \Big]^{-1} \frac{\partial u^{I}}{\partial \nu},  
\label{sing-obs-5}
\end{equation}
provided $\Big[Id-A S^{\kappa_{0}}_{D}\Big] $ is an invertible operator.\\
\textbf{Notation:} For $\phi \in L^{2}(\partial D), \psi \in L^{2}(\partial B) $, we shall denote by $\hat{\phi}, \check{\psi} $ the functions
\[\begin{aligned}
\hat{\phi}(\xi)&:= \phi(\delta \xi+z), \ \xi \in \partial B, \\
\check{\psi}(x)&:= \psi(\frac{x-z}{\delta}), \ x \in \partial D.
\end{aligned}
 \]
%
%{\color{red}{
\begin{lemma}\label{estimate-in-lambda}
 Let $\kappa_0>0$. The operator $\lambda\; Id +(K^{\kappa_0}_D)^{*}: L^2(\partial D)\longrightarrow L^2(\partial D)$ is invertible with 

\begin{equation} \label{k-0=0-estimate-in-lambda}
 \Vert (\lambda\; Id +(K^{\kappa_0}_D)^{*})^{-1}\Vert_{\mathcal{L}(L^2(\partial D), L^2(\partial D))} \sim (\lambda -\frac{1}{2})^{-1},
\end{equation}
if $\mu_1 \sim \delta^{\beta},\; \beta <2$.

\end{lemma}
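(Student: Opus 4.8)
The plan is to reduce the problem to the fixed reference surface $\partial B$ with a vanishing wavenumber, and then treat $\lambda\, Id+(K^{\kappa_0}_D)^{*}$ as a small perturbation of the \emph{static} Neumann--Poincar\'e operator, whose resolvent norm I control through its spectral gap.

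\textbf{Step 1 (scaling to $\partial B$).} First I would pass from $\partial D$ to $\partial B$ via $x=z+\delta\xi$, using the notation $\hat\phi(\xi):=\phi(z+\delta\xi)$ introduced above. Since $G^{\kappa_0}(z+\delta\xi,z+\delta\eta)=\delta^{-1}G^{\delta\kappa_0}(\xi,\eta)$, $ds(y)=\delta^{2}\,ds(\eta)$ and the unit normal is scale invariant, a direct computation gives the intertwining $\widehat{(K^{\kappa_0}_D)^{*}\phi}=(K^{\delta\kappa_0}_B)^{*}\hat\phi$. The map $\phi\mapsto\hat\phi$ is $\delta^{-1}$ times an isometry $L^{2}(\partial D)\to L^{2}(\partial B)$, so operator norms and inverse norms are preserved; it therefore suffices to prove $\| (\lambda\, Id+(K^{\delta\kappa_0}_B)^{*})^{-1}\|_{\mathcal{L}(L^2(\partial B))}\sim(\lambda-\tfrac12)^{-1}$ on the fixed surface $\partial B$.

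\textbf{Step 2 (freezing the wavenumber).} Next I would expand the Helmholtz kernel $\tfrac{e^{i\delta\kappa_0 r}-1}{4\pi r}$ and differentiate in the normal direction: the singular contributions cancel and the normal derivative of the difference is bounded by $C(\delta\kappa_0)^{2}$ uniformly on $\partial B$. A Schur estimate over the fixed surface $\partial B$ then yields $\|(K^{\delta\kappa_0}_B)^{*}-(K^{0}_B)^{*}\|_{\mathcal{L}(L^2(\partial B))}=\mathcal{O}(\delta^{2})$ (lower order terms are irrelevant here). Thus $\lambda\, Id+(K^{\delta\kappa_0}_B)^{*}$ is an $\mathcal{O}(\delta^{2})$ perturbation of the static operator $\lambda\, Id+(K^{0}_B)^{*}$.

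\textbf{Step 3 (static resolvent, the core).} The heart of the matter is the static estimate $\|(\lambda\, Id+(K^{0}_B)^{*})^{-1}\|\sim(\lambda-\tfrac12)^{-1}$. I would use that $(K^{0}_B)^{*}$ has real spectrum contained in $[-\tfrac12,\tfrac12)$, with the endpoint $-\tfrac12$ a \emph{simple} eigenvalue carried by the equilibrium density; for a ball this is explicit through the eigenvalues $-\tfrac{1}{2(2n+1)}$, $n\ge0$. The reality and localization of the spectrum, together with the self-adjointness of $(K^{0}_B)^{*}$ in the inner product induced by the single layer $S^{0}_B$ (Plemelj symmetrization, $S^{0}_B(K^{0}_B)^{*}=K^{0}_B S^{0}_B$), give that $-\lambda$ lies outside the spectrum at distance comparable to $|\lambda-\tfrac12|=|\mu_1/(\mu_0-\mu_1)|$ from the critical endpoint; the spectral theorem then produces the upper bound $\|(\lambda\, Id+(K^{0}_B)^{*})^{-1}\|\le C|\lambda-\tfrac12|^{-1}$, while the matching lower bound follows by testing the resolvent against the equilibrium eigenfunction. \emph{This is the step I expect to be the main obstacle}: $(K^{0}_B)^{*}$ is not self-adjoint on $L^{2}(\partial B)$ for a general Lipschitz $B$, so the sharp resolvent bound must be transferred from the symmetrized Hilbert space to the $L^{2}$ operator norm, using the equivalence of the single-layer norm with $\|\cdot\|_{H^{-1/2}}$ and the known reality of the $L^{2}$-spectrum.

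\textbf{Step 4 (perturbation and conclusion).} Finally I would combine Steps 2 and 3 by a Neumann series. Since $\|(\lambda\, Id+(K^{0}_B)^{*})^{-1}\|\cdot\|(K^{\delta\kappa_0}_B)^{*}-(K^{0}_B)^{*}\|=\mathcal{O}(\delta^{2}\,|\lambda-\tfrac12|^{-1})=\mathcal{O}(\delta^{2-\beta})$, the hypothesis $\beta<2$ forces this product to tend to $0$; hence $\lambda\, Id+(K^{\delta\kappa_0}_B)^{*}$ is invertible and its inverse has the same order $|\lambda-\tfrac12|^{-1}$. Undoing the scaling of Step 1 returns the estimate on $\partial D$. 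The smallness assumption enters only here, which is exactly why the threshold is $\beta<2$: the static gap $|\lambda-\tfrac12|\sim\delta^{\beta}$ must dominate the $\mathcal{O}(\delta^{2})$ wavenumber correction for the perturbation to close.
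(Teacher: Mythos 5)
Your Steps 1, 2 and 4 match the paper's strategy: the paper likewise treats $\lambda\,Id+(K^{\kappa_0}_D)^{*}$ as a perturbation of the static operator, uses $\Vert (K^{\kappa_0}_D)^{*}-(K^{0}_D)^{*}\Vert_{\mathcal{L}(L^{2}(\partial D),L^{2}(\partial D))}\sim\delta^{2}$ (cited from Challa--Sini rather than re-derived; your rescaling-plus-Schur derivation on $\partial B$ is a correct way to obtain it), and closes with exactly your Neumann-series bookkeeping $\delta^{2}\,(\lambda-\tfrac12)^{-1}\sim\delta^{2-\beta}\ll 1$, which is where $\beta<2$ enters. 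Your lower bound by testing on the equilibrium density is also fine. The problem is Step 3, which you yourself flag, and it is a genuine gap rather than a routine technicality.

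The lemma is stated for Lipschitz $D$, and in that generality your spectral route to $\Vert(\lambda\,Id+(K^{0}_B)^{*})^{-1}\Vert_{\mathcal{L}(L^{2},L^{2})}\lesssim(\lambda-\tfrac12)^{-1}$ does not close, for three reasons. First, ``the known reality of the $L^{2}$-spectrum'' is not available: for merely Lipschitz boundaries the Neumann--Poincar\'e operator is not compact, and for domains with corners its $L^{2}$ essential spectrum is known to contain non-real points (it is a two-dimensional lens-shaped region, in contrast with the real segment one gets in the energy space), so reality of the spectrum holds in the symmetrized $H^{-1/2}$ setting but not on $L^{2}$. Second, even granting a real $L^{2}$-spectrum, $(K^{0}_B)^{*}$ is not normal on $L^{2}$, and for a non-normal operator the bound $\Vert(\lambda\,Id-T)^{-1}\Vert\le \mathrm{dist}(\lambda,\sigma(T))^{-1}$ is simply not a theorem; the spectral-theorem argument lives only in the single-layer inner product. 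Third, the Plemelj symmetrization together with $\Vert S^{0}_B\cdot\Vert\sim\Vert\cdot\Vert_{H^{-1/2}}$ transfers the sharp resolvent bound to the $H^{-1/2}$ operator norm, not to $\mathcal{L}(L^{2},L^{2})$, and you give no mechanism for the transfer. What actually powers \eqref{k-0=0-estimate-in-lambda} in the paper is the Rellich-identity based $L^{2}$ theory on Lipschitz domains (as packaged in Ammari--Kang): $\lambda\,Id+K^{0}_D$ is an isomorphism of $L^{2}_0(\partial D)$ with inverse bounded uniformly in $\lambda$ for $\vert\lambda\vert\ge\tfrac12$, while on the constants, since $K^{0}_D(1)=-\tfrac12$, it acts as multiplication by $\lambda-\tfrac12$; combining the decomposition $L^{2}(\partial D)=1\oplus L^{2}_0(\partial D)$ with duality gives the stated two-sided estimate for the adjoint, and the regime of large $\mu_1$ (where $\lambda+\tfrac12=\frac{\mu_0}{\mu_0-\mu_1}$ is small) is handled separately by a Neumann series around the invertible operator $-\tfrac12\,Id+(K^{0}_D)^{*}$. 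Replacing your Step 3 by this decomposition argument, and keeping your Steps 1, 2 and 4, repairs the proof.
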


\begin{proof}
Let us first assume that $\mu_1 \sim \delta^{-\beta}, \beta > 0$. Then $\lambda\; Id +(K^{0}_D)^{*}$ is invertible and 
$(\lambda\; Id +(K^{0}_D)^{*})^{-1}$ is bounded, for $\delta>0$ small enough. To see this, we write 
$$
(\lambda\; Id +(K^{0}_D)^{*})^{-1}=(-\frac{1}{2} Id +(K^{0}_D)^{*})^{-1}(Id +\frac{\mu_0}{\mu_0-\mu_1}(-\frac{1}{2} Id +(K^{0}_D)^{*})^{-1})^{-1}
$$
where $\frac{\mu_0}{\mu_0-\mu_1}$ is small as $\delta>0$ is small. 
\bigskip

For the case $\mu_1 \sim \delta^{\beta}, \beta \geq 0$, we use the decomposition $L^2(\partial D)=1 \oplus L_0^2(\partial D)$ and recall that $(\frac{1}{2} Id + K^{0}_D)\mid_{1}\equiv 0$, $(\lambda\; Id + K^{0}_D)\mid_{1}\equiv \lambda -\frac{1}{2}$ (since $K^{0}_{D}(1)=-\frac{1}{2} $), while $(\lambda\; Id + K^{0}_D)\mid_{L_0^2(\partial D)}$ is an isomorphism with a bounded inverse in terms of $\lambda$, 
$\vert \lambda \vert \geq \frac{1}{2}$, see \cite{Ammari-Kang-1}. Hence 
\begin{equation} \label{k-0=0-estimate-in-lambda-pr}
 \Vert (\lambda\; Id +(K^{0}_D)^{*})^{-1}\Vert_{\mathcal{L}(L^2(\partial D), L^2(\partial D))}=\Vert (\lambda\; Id + K^{0}_D)^{-1}\Vert_{\mathcal{L}(L^2(\partial D), L^2(\partial D))} \sim (\lambda -\frac{1}{2})^{-1}.
\end{equation}
Combining the estimates from the above two cases, we can now prove the desired result. To do so, we first write 
 $$
 \lambda\; Id +(K^{\kappa_0}_D)^{*} =[\lambda\; Id +(K^{0}_D)^{*}][Id +(\lambda\; Id +(K^{0}_D)^{*})^{-1}[(K^{\kappa_0}_D)^{*}-(K^{0}_D)^{*}]].
 $$
 As $\Vert (K^{\kappa_0}_D)^{*}-(K^{0}_D)^{*} \Vert_{\mathcal{L}(L^{2}(\partial D),L^{2}(\partial D))} \sim \delta^{2}$ (see \cite{Challa-Sini-1, Challa-Sini-2}) and 
 $\Vert (\lambda\; Id +(K^{0}_D)^{*})^{-1}\Vert_{\mathcal{L}(L^2(\partial D), L^2(\partial D))} \sim (\lambda -\frac{1}{2})^{-1}\sim \delta^{-\beta}$, then 
 $\Vert (\lambda\; Id +(K^{0}_D)^{*})^{-1}[(K^{\kappa_0}_D)^{*}-(K^{0}_D)^{*}]\Vert_{\mathcal{L}(L^{2}(\partial D),L^{2}(\partial D))} \sim \delta^{2-\beta} << 1,$ if $\beta<2$ \footnote{In fact we can also take $\beta=2$ under the assumptions implying that
$ \Vert (\lambda\; Id +(K^{\kappa_0}_D)^{*})^{-1}\Vert\; \Vert (K^{\kappa_0}_D)^{*}-(K^{0}_D)^{*}\Vert <1$. These assumptions on $\mu_0, \mu_1/\delta^2$ and $\partial D$ (through its Lipschitz character) 
can be derived by expanding 
$\Phi^{\kappa_0}-\Phi^0$.}.
\end{proof}

\begin{remark}
In case $\kappa_{0}$ is not constant, we shall see later that $\Vert (K^{\kappa_0}_D)^{*}-(K^{0}_D)^{*} \Vert_{\mathcal{L}(L^{2}(\partial D),L^{2}(\partial D))} \sim \delta^{2} (log \ \delta) $ and not of order $\delta^{2} $. Therefore in the above lemma, we have to choose $\alpha $ accordingly. But surely this holds if $\beta <1 $.
\end{remark}
%}}
%{\color{red}{
\begin{lemma}\label{scaling}
 The operator $\Big[Id-A S^{\kappa_{0}}_{D}\Big] $ is invertible and $\Vert [Id-A S^{\kappa_{0}}_{D} ]^{-1} \Vert_{\mathcal{L}(H^{1}(\partial D),L^{2}(\partial D))} $ is uniformly bounded if $\mu_1\sim \delta^{-\beta}, \; \beta \geq 0$, and $\kappa_{1}^{2} \sim \delta^{-\alpha},\ 0\leq \alpha<1 $.
\end{lemma}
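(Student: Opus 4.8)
The plan is to prove invertibility by a small-perturbation (Neumann series) argument, but \emph{not} applied directly to $A S_D^{\kappa_0}$: in the electric regime ($\beta=0$) this operator is only $\mathcal{O}(1)$ and need not be a contraction, so a naive expansion fails. Instead I would first peel off the factor already known to be invertible. Inserting the definition of $A$ and factoring $[-\tfrac12 Id+(K_D^{\kappa_0})^*]^{-1}$ to the left gives
\begin{equation}
Id - A S_D^{\kappa_0} = \Big[-\tfrac12 Id + (K_D^{\kappa_0})^*\Big]^{-1}\,\mathcal{T}, \qquad \mathcal{T}:=\Big[-\tfrac12 Id + (K_D^{\kappa_0})^*\Big] - \tfrac{\mu_0}{\mu_1}\Big[\tfrac12 Id + (K_D^{\kappa_1})^*\Big](S_D^{\kappa_1})^{-1} S_D^{\kappa_0}.
\notag
\end{equation}
Since $-\tfrac12 Id+(K_D^{\kappa_0})^*$ is invertible with uniformly bounded inverse (Lemma \ref{estimate-in-lambda} with $\lambda=-\tfrac12$, for which $\lambda-\tfrac12=-1$), invertibility of $Id-AS_D^{\kappa_0}$ with a uniform bound is equivalent to the same property for $\mathcal{T}$, and $[Id-AS_D^{\kappa_0}]^{-1}=\mathcal{T}^{-1}[-\tfrac12 Id+(K_D^{\kappa_0})^*]$.

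Next I would rescale to the reference domain $\partial B$ via the notation $\hat{\cdot}$. Under $x=z+\delta\xi$ one has $S_D^{\kappa}\phi=\delta\,S_B^{\kappa\delta}\hat\phi$ and $(K_D^{\kappa})^*\phi=(K_B^{\kappa\delta})^*\hat\phi$, so the $\delta$'s cancel in the composition and $(S_D^{\kappa_1})^{-1}S_D^{\kappa_0}$ becomes $(S_B^{\kappa_1\delta})^{-1}S_B^{\kappa_0\delta}$ on $\partial B$. Because $\kappa_0\delta\sim\delta$ and $\kappa_1\delta\sim\delta^{1-\alpha/2}$ both tend to $0$, and $S_B^0:L^2(\partial B)\to H^1(\partial B)$ is an isomorphism on the Lipschitz boundary $\partial B$ (see \cite{Ammari-Kang-1}), the operator $S_B^{\kappa_1\delta}$ is invertible for small $\delta$ and $(S_B^{\kappa_1\delta})^{-1}S_B^{\kappa_0\delta}\to Id$. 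Likewise $(K_B^{\kappa_i\delta})^*\to(K_B^0)^*$, with $\Vert(K_B^{\kappa_0\delta})^*-(K_B^0)^*\Vert\sim\delta^2$ and $\Vert(K_B^{\kappa_1\delta})^*-(K_B^0)^*\Vert\sim\delta^{2-\alpha}$ (cf. the estimates used in Lemma \ref{estimate-in-lambda} and \cite{Challa-Sini-1, Challa-Sini-2}).

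I would then identify the invertible leading operator of $\mathcal{T}$ and treat the rest as a small perturbation. When $\beta>0$ the whole second term is $\mathcal{O}(\delta^\beta)$, so $\mathcal{T}=[-\tfrac12 Id+(K_B^0)^*]+o(1)$, which is invertible. When $\beta=0$ the factor $\tfrac{\mu_0}{\mu_1}$ tends to a positive constant $m$ and the limit becomes $[-\tfrac12 Id+(K_B^0)^*]-m[\tfrac12 Id+(K_B^0)^*]=-\tfrac{1+m}{2}Id+(1-m)(K_B^0)^*$; this equals $-Id$ when $m=1$, and for every admissible $m>0$, $m\neq1$, it has the form $(1-m)\big[(K_B^0)^*+\lambda Id\big]$ with $|\lambda|>\tfrac12$, hence invertible by the spectral bound for $K_B^0$ recalled in Lemma \ref{estimate-in-lambda}/\cite{Ammari-Kang-1}. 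In both cases the remainder collects $\tfrac{\mu_0}{\mu_1}-m$, $(S_B^{\kappa_1\delta})^{-1}S_B^{\kappa_0\delta}-Id$, and $(K_B^{\kappa_i\delta})^*-(K_B^0)^*$, each of size $\mathcal{O}(\delta^{\min\{\beta,\,1-\alpha/2,\,2-\alpha\}})=o(1)$ for $0\le\alpha<1$. A Neumann series then yields $\mathcal{T}^{-1}$ with a uniform bound on $L^2(\partial D)$, whence $[Id-AS_D^{\kappa_0}]^{-1}=\mathcal{T}^{-1}[-\tfrac12 Id+(K_D^{\kappa_0})^*]$ is uniformly bounded on $L^2(\partial D)$, and a fortiori as an operator $H^1(\partial D)\to L^2(\partial D)$.

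The main obstacle is precisely the electric case $\beta=0$: there $AS_D^{\kappa_0}$ is not small, so one must expose the exact cancellation $[-\tfrac12 Id+(K_B^0)^*]-[\tfrac12 Id+(K_B^0)^*]=-Id$ through the factorization rather than expand $AS_D^{\kappa_0}$ naively, and one must check that the non-self-adjoint limit $-\tfrac{1+m}{2}Id+(1-m)(K_B^0)^*$ stays away from the non-invertible value $\lambda=\tfrac12$ for all admissible $m$. The second delicate point is the uniform control of $(S_D^{\kappa_1})^{-1}$ with the large wavenumber $\kappa_1\sim\delta^{-\alpha/2}$: one needs $\kappa_1\delta\to0$ in order to perturb off $S_B^0$, together with careful bookkeeping of the competing $\delta$-powers arising from the mixed $L^2$/$H^1$ scaling of the single-layer operators; the restriction $0\le\alpha<1$ is exactly what keeps every error exponent strictly positive.
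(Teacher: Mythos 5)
Your proposal is correct and follows essentially the same route as the paper's proof: factor out the uniformly invertible $\big[-\tfrac12 Id+(K^{\kappa_0}_{D})^{*}\big]$, show $(S^{\kappa_1}_{D})^{-1}S^{\kappa_0}_{D}=Id+o(1)$ by a Neumann series off $S^{0}_{D}$, and reduce the leading part to $(1-\tfrac{\mu_0}{\mu_1})\big[\lambda Id+(K^{0}_{D})^{*}\big]$ with $\lambda=\tfrac12\tfrac{\mu_0+\mu_1}{\mu_0-\mu_1}$, $|\lambda|>\tfrac12$ --- your separate $\beta>0$ and $\beta=0$ cases are exactly what the paper handles in one stroke through this $\lambda$. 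One bookkeeping caveat: the paper's quoted estimate $\Vert (S^{0}_{D})^{-1}S^{d_{\kappa_1}}_{D}\Vert \lesssim (1+\kappa_1)\kappa_1\delta \sim \delta^{1-\alpha}$ is what genuinely forces $\alpha<1$ there, whereas your exponent $1-\alpha/2$ (together with $2-\alpha$) would only force $\alpha<2$, so your closing claim that $\alpha<1$ is ``exactly what keeps every error exponent strictly positive'' does not match your own list --- harmless for the lemma as stated, since all exponents are positive on $0\le\alpha<1$ either way.
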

%}}
\begin{proof}
Following \cite{Challa-Sini-1, Challa-Sini-2}, we can prove that the operator $[-\frac{1}{2} Id+ (K^{\kappa_{0}}_{D})^{*}]^{-1}$ is uniformly bounded and the operator $[\frac{1}{2} Id+(K^{\kappa_{1}}_{D})^{*}]$ is uniformly bounded provided $\alpha<2 $. We next prove that $(S^{\kappa_{1}}_{D})^{-1} S^{\kappa_{0}}_{D} $ is uniformly bounded with respect to $\delta$ as well. To see this, following \cite{Challa-Sini-1, Challa-Sini-2}, let us consider 
\begin{eqnarray}\label{expansdsdinv}
 \left(S_{D}^{\kappa_1}\right)^{-1} S_{D}^{\kappa_0}
 &=& \left(S_{D}^{0}+S_{D}^{d_{\kappa_1}}\right)^{-1}\left(S_{D}^{0}+S_{D}^{d_{\kappa_0}}\right)\nonumber\\
&=& \left(Id+\left(S_{D}^{0}\right)^{-1}S_{D}^{d_{\kappa_1}}\right)^{-1}\left(S_{D}^{0}\right)^{-1}\left(S_{D}^{0}+S_{D}^{d_{\kappa_0}}\right)\nonumber\\ 
&=& \left(Id+\left(S_{D}^{0}\right)^{-1}S_{D}^{d_{\kappa_1}}\right)^{-1}\left(Id+\left(S_{D}^{0}\right)^{-1}S_{D}^{d_{\kappa_0}}\right)\nonumber\\ 
&=& \left(Id+\sum_{l=1}^{\infty} (-1)^{l} \left(\left(S_{D}^{0}\right)^{-1}S_{D}^{d_{\kappa_1}}\right)^{l}\right)\left(Id+\left(S_{D}^{0}\right)^{-1}S_{D}^{d_{\kappa_0}}\right)\nonumber\\
&=& Id+\sum_{l=1}^{\infty} (-1)^{l} \left(\left(S_{D}^{0}\right)^{-1}S_{D}^{d_{\kappa_1}}\right)^{l}\left(Id+\left(S_{D}^{0}\right)^{-1}S_{D}^{d_{\kappa_0}}\right)+\left(S_{D}^{0}\right)^{-1}S_{D}^{d_{\kappa_0}},
 \end{eqnarray}
 where $S^{0}_{D} \psi(x)= \int_{\partial D} \frac{1}{4 \pi \vert x-y \vert} \psi(y) \ ds(y) $ and $S^{d_{\kappa_{j}}}_{D} \psi(x) = \int_{\partial D} \frac{e^{i \kappa_{j} \vert x-y \vert}-1}{4 \pi \vert x-y \vert} \psi(y) \ ds(y), j=0,1 $.\\
 The equality \eqref{expansdsdinv} holds for $\left\|\left(S_{D}^{0}\right)^{-1}S_{D}^{d_{\kappa_1}}
 \right\|_{\mathcal{L}\left(L^2(\partial D), L^2(\partial D) \right)}<1$. 
 From \cite{Challa-Sini-2}, this condition holds for $\delta$ such that $\underbrace{\left(\frac{|\partial B|}{2\pi}\left\|{S^{0}_{B}}^{-1}\right\|_{\mathcal{L}\left(H^1(\partial B), L^2(\partial B) \right)}(1+ \kappa_{1})\kappa_{1}\right)}_{=:C}\, \delta<1$. We note that this also necessitates the condition $\alpha<1 $.
 For $j=0,1 $, we also have
 \begin{equation}\label{neumannc}
 \left\|\left(S_{D}^{0}\right)^{-1}S_{D}^{d_{\kappa_j}}\right\|_{\mathcal{L}\left(L^2(\partial D), L^2(\partial D) \right)}\leq\underbrace{\frac{|\partial B|}{2\pi}\left\|{S^{0}_{B}}^{-1}\right\|_{\mathcal{L}\left(H^1(\partial B), L^2(\partial B) \right)}(1+ \kappa_j)\kappa_j}_{=:C_j}\, \delta ,
 \end{equation}
where $C_1=\mathcal{O}(\delta^{-\alpha}) $.
Observe that,
%{\color{red}{
\begin{equation}
\begin{aligned}
&\left\|\sum_{l=1}^{\infty} (-1)^{l} \left(\left(S_{D}^{0}\right)^{-1}S_{D}^{d_{\kappa_1}}\right)^{l}\left(Id+\left(S_{D}^{0}\right)^{-1}S_{D}^{d_{\kappa_0}}\right)+\left(S_{D}^{0}\right)^{-1}S_{D}^{d_{\kappa_0}}\right\|_{\mathcal{L}\left(L^2(\partial D), L^2(\partial D) \right)}\nonumber\\
&\leq \left\|\sum_{l=1}^{\infty} (-1)^{l} \left(\left(S_{D}^{0}\right)^{-1}S_{D}^{d_{\kappa_1}}\right)^{l}\right\|_{\mathcal{L}\left(L^2(\partial D), L^2(\partial D) \right)}\left\|\left(Id+\left(S_{D}^{0}\right)^{-1}S_{D}^{d_{\kappa_0}}\right)\right\|_{\mathcal{L}\left(L^2(\partial D), L^2(\partial D) \right)}\\ 
&\qquad +\left\|\left(S_{D}^{0}\right)^{-1}S_{D}^{d_{\kappa_0}}\right\|_{\mathcal{L}\left(L^2(\partial D), L^2(\partial D) \right)}\nonumber\\
&\leq \left(\frac{C_1}{1-C_1\delta}(1+C_0\delta)+C_0\right)\delta.
\end{aligned}
\notag
\end{equation}
\begin{equation}\label{single}
\begin{aligned}
&\text{Therefore}\ (S^{\kappa_{1}}_{D})^{-1} S^{\kappa_{0}}_{D}\  \text{is uniformly bounded, since}\ \alpha<1 .
\end{aligned}
\end{equation} 
Using \eqref{expansdsdinv}-\eqref{neumannc}, we can now write
\begin{equation}
Id- A S^{\kappa_{0}}_{D}= Id-\frac{\mu_{0}}{\mu_{1}} [-\frac{1}{2} Id+(K^{\kappa_{0}}_{D})^{*} ]^{-1} [\frac{1}{2} Id+(K^{\kappa_{1}}_{D})^{*}] +\mathcal{O}(\frac{\mu_0}{\mu_1}\delta^{1-\alpha}).
\notag
\end{equation}
%\footnote{Observe that, \begin{equation*}
 % [\lambda Id+ (K^{0}_{D})^{*}]^{-1} =[\frac{1}{2}\frac{\mu_0+\mu_1}{(\mu_0-\mu_1)} Id+ (K^{0}_{D})^{*}]^{-1} =\left\{\begin{array}{cccc}
  %                                                                 [\frac{\mu_1}{(\mu_0-\mu_1)}+\frac{1}{2} Id+ (K^{0}_{D})^{*}] ^{-1} \mbox{ if }\mu_1=O(\delta^\alpha),0<\alpha<1;\\  
   %                                                                
    %                                                            [\frac{\mu_1}{(\mu_0-\mu_1)}-\frac{1}{2} Id+ (K^{0}_{D})^{*}] ^{-1} \mbox{ if }\mu_1=O(\delta^{-\beta}),\beta\geq0\\ 
     %                                                          \,\qquad=(-\frac{1}{2} Id+ (K^{0}_{D})^{*}) ^{-1}[Id+\frac{\mu_1}{(\mu_0-\mu_1)}(-\frac{1}{2} Id+ (K^{0}_{D})^{*})] ^{-1} ;
      %                                                             \end{array}\right.
       %                 \end{equation*}}\\ 
Multiplying both sides by $[-\frac{1}{2} Id+(K^{\kappa_{0}}_{D})^{*}]$, we then have
\begin{equation}
\begin{aligned}
&[-\frac{1}{2} Id+(K^{\kappa_{0}}_{D})^{*}] [Id-A S^{\kappa_{0}}_{D} ] = [-\frac{1}{2} Id+(K^{\kappa_{0}}_{D})^{*}]-\frac{\mu_{0}}{\mu_{1}} [\frac{1}{2} Id+(K^{\kappa_{1}}_{D})^{*}] + [-\frac{1}{2} Id+ (K^{\kappa_{0}}_{D})^{*}] \mathcal{O}(\frac{\mu_0}{\mu_1}\delta^{1-\alpha}) \\
&= [-\frac{1}{2} Id+(K^{0}_{D})^{*}] -\frac{\mu_{0}}{\mu_{1}} [\frac{1}{2} Id+(K^{0}_{D})^{*}] + [(K^{\kappa_{0}}_{D})^{*}-(K^{0}_{D})^{*}] -\frac{\mu_{0}}{\mu_{1}} [(K^{\kappa_{1}}_{D})^{*}-(K^{0}_{D})^{*}]+ [-\frac{1}{2} Id+(K^{\kappa_{0}}_{D})^{*}] \mathcal{O}(\frac{\mu_0}{\mu_1}\delta^{1-\alpha})\\
&=(1-\frac{\mu_{0}}{\mu_{1}}) (\lambda Id+(K^{0}_{D})^{*})+ \mathcal{O}(\frac{\mu_0}{\mu_1} \delta^{1-\alpha})+\mathcal{O}(\frac{\mu_0}{\mu_1}\delta^{2-\alpha})+\mathcal{O}(\delta^2),
\end{aligned}
\notag
\end{equation}
since $[(K^{\kappa_{0}}_{D})^{*}-(K^{0}_{D})^{*}]$ is of order $\delta^{2}$ and $[(K^{\kappa_{1}}_{D})^{*}-(K^{0}_{D})^{*}]$ is of order $\kappa_{1}^{2}\delta^{2}(=\delta^{2-\alpha}) $. Therefore if $\alpha < 1 $, the error above is atleast of the order $\delta^{1-\alpha} $.\\
Now since $\mu_{0} \neq \mu_{1}$, it follows that $[\lambda Id+ (K^{0}_{D})^{*}]$ is invertible and since norm of $[-\frac{1}{2} Id+(K^{\kappa_{0}}_{D})^{*}]$ is bounded by a constant, it then follows that $[Id-A S^{\kappa_{0}}_{D}]$ is an invertible operator and the norm of the inverse is uniformly bounded.

%
%
\begin{comment}
Let us first show that $\tilde{A} \sim \frac{\mu_{1}}{\mu_{0}} [\frac{1}{2} Id+ (K^{\kappa_{1}}_{D})^{*} ]^{-1} $ and estimate the error terms.\\
To see this, \\

Next let us define $$X:=Id-\frac{\mu_{1}}{\mu_{0}} [\frac{1}{2} Id+ (K^{\kappa_{1}}_{D})^{*} ]^{-1} [-\frac{1}{2} Id+(K^{\kappa_{0}}_{D})^{*}].$$
Multiplying $X$ by $[-\frac{1}{2} Id+(K^{\kappa_{1}}_{D})^{*}] $ from the left, we obtain the operator $[-\frac{1}{2} Id+(K^{\kappa_{1}}_{D})^{*}]-\frac{\mu_{1}}{\mu_{0}} [\frac{1}{2} Id+ (K^{\kappa_{0}}_{D})^{*}] $.\\
The above operator can be approximated by the operator $[-\frac{1}{2} Id+(K^{0}_{D})^{*}] -\frac{\mu_{1}}{\mu_{0}} [\frac{1}{2} Id+(K^{0}_{D})^{*}] $, which is equal to
\begin{equation}
\begin{aligned}
&= -\frac{1}{2} (1+\frac{\mu_{1}}{\mu_{0}}) Id+ (1-\frac{\mu_{1}}{\mu_{0}}) (K^{0}_{D})^{*} \\
&= (1-\frac{\mu_{1}}{\mu_{0}}) [\lambda Id + (K^{0}_{D})^{*}],
\end{aligned}
\notag
\end{equation}  
provided $\mu_{0} \neq \mu_{1}$, where $\lambda=\frac{-\frac{1}{2} (1+\frac{\mu_{1}}{\mu_{0}})}{1-\frac{\mu_{1}}{\mu_{0}}} $. \\
We next show that under the same condition that $\mu_{0} \neq \mu_{1}$, the operator $[\lambda \ Id+(K^{0}_{D})^{*} ]$ is invertible. In fact, $$\lambda - \frac{1}{2} = \frac{-1}{1-\frac{\mu_{1}}{\mu_{0}}} > 0, $$ provided $\mu_{0}< \mu_{1}$, and hence the operator is invertible. Similarly $$\lambda +\frac{1}{2}=\frac{-\frac{\mu_{1}}{\mu_{0}}}{1-\frac{\mu_{1}}{\mu_{0}}} < 0, $$ provided $\mu_{0} > \mu_{1}$, and hence again the operator is invertible.
\end{comment}
\end{proof}
Let $$B:=\Big[\frac{1}{2} Id+(K^{\kappa_{1}}_{D})^{*} \Big] (S^{\kappa_{1}}_{D})^{-1} u^{I} = \left.\frac{\partial S^{\kappa_{1}}_{D} }{\partial \nu}\right\vert_{-}  ((S^{\kappa_{1}}_{D})^{-1} u^{I}).$$ Then
from \eqref{sing-obs-5} it follows that
\begin{equation}
\phi= \Big[Id- A S^{\kappa_{0}}_{D} \Big]^{-1} \Big[-\frac{1}{2} Id+(K^{\kappa_{0}}_{D})^{*} \Big]^{-1} \Big[\frac{\mu_{0}}{\mu_{1}} B-\frac{\partial u^{I}}{\partial \nu} \Big]. 
\notag
\end{equation}
Let us set $W_{\kappa_{1}}:=S^{\kappa_{1}}_{D} ((S^{\kappa_{1}}_{D})^{-1} u^{I}) $. Then $W_{\kappa_{1}}$ satisfies 
\begin{equation}
 \begin{aligned}
  (\Delta+\kappa_{1}^{2}) W_{\kappa_{1}} &=0 \ \text{in}\ D \\
  W_{\kappa_{1}} &= u^{I} \ \text{on} \ \partial D.
 \end{aligned}
\notag
\end{equation}
\begin{lemma}\label{H2propWk}
We have the following behavior of $W_{\kappa_{1}}$:
\begin{eqnarray} 
 \hat{W}_{\kappa_{1}}=\hat{u}^{I} + \mathcal{O}(\delta^{2-\alpha}) \ \text{in}\ H^{2}(B)\label{H2propWk-B}\\
 W_{\kappa_{1}}=u^{I} + \mathcal{O}(\delta^{\frac{3}{2}-\alpha}) \ \text{in}\ H^{2}(D)\label{H2propWk-D}.
  \end{eqnarray}
\end{lemma}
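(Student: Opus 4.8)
The plan is to recognise $W_{\kappa_{1}}$ as the solution of an interior Helmholtz problem and to measure its distance to $u^{I}$ by a perturbation argument, the natural small (dimensionless) parameter being $\delta^{2}\kappa_{1}^{2}\sim \delta^{2-\alpha}$. Since $u^{I}(x)=e^{i\kappa_{0}x\cdot d}$ is a $\kappa_{0}$-plane wave it satisfies $(\Delta+\kappa_{0}^{2})u^{I}=0$, hence $(\Delta+\kappa_{1}^{2})u^{I}=(\kappa_{1}^{2}-\kappa_{0}^{2})u^{I}$ in $D$. Setting $R:=W_{\kappa_{1}}-u^{I}$ and using $(\Delta+\kappa_{1}^{2})W_{\kappa_{1}}=0$ in $D$ together with $W_{\kappa_{1}}=u^{I}$ on $\partial D$, I obtain that $R$ solves the homogeneous-Dirichlet source problem
\begin{equation}
(\Delta+\kappa_{1}^{2})R=-(\kappa_{1}^{2}-\kappa_{0}^{2})\,u^{I}\ \text{ in } D,\qquad R=0\ \text{ on }\partial D.
\notag
\end{equation}
Thus everything reduces to estimating a Helmholtz Dirichlet problem with zero boundary datum and a right-hand side whose size is governed by $\kappa_{1}^{2}-\kappa_{0}^{2}\sim\delta^{-\alpha}$ (note that $\mu_{0},\mu_{1}$ do not enter, as $W_{\kappa_{1}}$ depends only on $\kappa_{1}$).

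Next I would rescale to the fixed reference body $B$. Writing $x=\delta\xi+z$ and $\hat{R}(\xi)=R(\delta\xi+z)$, the identity $\Delta_{\xi}\hat{R}=\delta^{2}(\Delta_{x}R)(\delta\xi+z)$ turns the above into
\begin{equation}
\Delta_{\xi}\hat{R}+\delta^{2}\kappa_{1}^{2}\,\hat{R}=-\delta^{2}(\kappa_{1}^{2}-\kappa_{0}^{2})\,\hat{u}^{I}\ \text{ in } B,\qquad \hat{R}=0\ \text{ on }\partial B.
\notag
\end{equation}
Here $\Delta+\delta^{2}\kappa_{1}^{2}$ is a perturbation of the Dirichlet Laplacian on $B$ of relative size $\delta^{2}\kappa_{1}^{2}\sim\delta^{2-\alpha}\to0$ for $\alpha<2$; since $0$ is not a Dirichlet eigenvalue of $-\Delta$ on $B$, this operator is boundedly invertible, uniformly in $\delta$, for $\delta$ small. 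Because $\hat{u}^{I}(\xi)=e^{i\kappa_{0}z\cdot d}e^{i\kappa_{0}\delta\xi\cdot d}$ satisfies $\|\hat{u}^{I}\|_{H^{2}(B)}=\mathcal{O}(1)$ (its derivatives carry harmless factors $\kappa_{0}\delta$), the right-hand side is $\mathcal{O}(\delta^{2-\alpha})$ in $L^{2}(B)$, and the uniform elliptic estimate yields $\|\hat{R}\|_{H^{2}(B)}=\mathcal{O}(\delta^{2-\alpha})$, which is precisely \eqref{H2propWk-B}.

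Finally, to pass from $B$ back to $D$ I would track how the $H^{2}$ norm transforms under the dilation $x=\delta\xi+z$: one has $\|R\|_{L^{2}(D)}=\delta^{3/2}\|\hat{R}\|_{L^{2}(B)}$, $|R|_{H^{1}(D)}=\delta^{1/2}|\hat{R}|_{H^{1}(B)}$ and $|R|_{H^{2}(D)}=\delta^{-1/2}|\hat{R}|_{H^{2}(B)}$, so for $\delta<1$ the dominant factor is $\delta^{-1/2}$ (coming from the second derivatives). Combining with the previous step gives $\|R\|_{H^{2}(D)}\lesssim\delta^{-1/2}\|\hat{R}\|_{H^{2}(B)}=\mathcal{O}(\delta^{3/2-\alpha})$, which is \eqref{H2propWk-D}. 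The main obstacle is the boundary behaviour in the rescaled problem: on a merely Lipschitz $B$ full $H^{2}$ regularity up to $\partial B$ is not automatic, so the uniform bound on $(\Delta+\delta^{2}\kappa_{1}^{2})^{-1}$ should really be secured through the single-layer representation $W_{\kappa_{1}}=S_{D}^{\kappa_{1}}\big((S_{D}^{\kappa_{1}})^{-1}u^{I}\big)$ and the $\delta$-explicit mapping and invertibility estimates for $S_{D}^{\kappa_{1}}$ and $(S_{D}^{\kappa_{1}})^{-1}$ established in \cite{Challa-Sini-1,Challa-Sini-2}; keeping the $\kappa_{1}$-dependence (hidden in the factor $\delta^{-\alpha}$) uniform throughout the rescaling is the delicate bookkeeping.
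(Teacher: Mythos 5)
Your proposal is correct and follows essentially the same route as the paper: the same reduction of $H:=W_{\kappa_{1}}-u^{I}$ to a homogeneous-Dirichlet Helmholtz source problem with right-hand side $(\kappa_{0}^{2}-\kappa_{1}^{2})u^{I}$, the same rescaling to the reference body $B$, elliptic regularity on $B$, and the $\delta^{-1/2}$ scaling of the $H^{2}$ norm to pass back to $D$. The only cosmetic difference is that the paper controls the zeroth-order term through an energy estimate on $D$ combined with the Faber--Krahn bound $\lambda_{1}(D)\geq C_{1}(B)\delta^{-2}$, whereas you absorb $\delta^{2}\kappa_{1}^{2}\sim\delta^{2-\alpha}$ after rescaling via a uniform perturbation (Neumann-series) bound for the Dirichlet Laplacian on $B$ --- equivalent bookkeeping of the same smallness, and both arguments rest on the same tacit $H^{2}$-regularity assumption for $B$ that you rightly flag.
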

\begin{proof}
We note that $\textcolor{black}{H:=W_{\kappa_{1}}-u^{I} }$ satisfies the elliptic equation
\begin{equation}\label{H2propWk-f-eq}
\left\{\begin{array}{cccc}
 (\Delta+ \kappa_{1}^{2}) H&= &h(\textcolor{black}{:=(\kappa_{0}^2-\kappa_{1}^2)e^{i \kappa_{0} x.d}}) &\text{in}\ D \\
 H&=& 0 & \text{on} \ \partial D.
\end{array}
\right.
\end{equation}

Multiplying the first equation of \eqref{H2propWk-f-eq} by $\bar{H}$ and by applying the integration by parts formula, we obtain:
\begin{equation}\label{H2propWk-f-eq-green}
\int_D\vert\nabla{H}\vert^2\,dx-\kappa_1^2\int_D\vert{H}\vert^2\,dx\,=\,-\int_Dh\bar{H}\,dx .
\end{equation}
Applying the Poincare inequality for the first term on L.H.S, we obtain
\begin{equation}\label{H2propWk-f-eq-poin}
C(D)\int_D\vert{H}\vert^2\,dx-\kappa_1^2\int_D\vert{H}\vert^2\,dx\,\leq\,\int_D\vert\nabla{H}\vert^2\,dx-\kappa_1^2\int_D\vert{H}\vert^2\,dx\,=\,-\int_Dh\bar{H}\,dx .
\end{equation}
Now since the first Dirichlet eigenvalue is the same as the sharp Poincare constant C(D), by the Faber-Krahn inequality, we have
\begin{equation}\label{Feber}
C(D)=\lambda_1(D)\geq\lambda_1(D^*)=\frac{C_1(B)}{\delta^2},
\end{equation}
where $D^{*} $ denotes the ball of same volume as $D $, 
$$
\lambda_1(D^*)
=\left(\frac{\omega_N}{\vert D \vert}\right)^\frac{2}{N}j_{\frac{N-2}{2},1}^2
%=\left(\frac{\frac{4}{3}\pi}{a^3}\right)^\frac{2}{3}J_{\frac{1}{2},1}^2
=\frac{\left(\frac{4}{3}\pi\right)^\frac{2}{3}}{\delta^2(diam(B))^2}j_{\frac{1}{2},1}^2=:\frac{C_1(B)}{\delta^2},
$$
and $j_{p,1} $ denotes the first positive zero of the Bessel function (of first kind) $J_p $ .\\
Using the inequality \eqref{Feber} in  \eqref{H2propWk-f-eq-poin} and by applying the Cauchy Schwartz inequality, we obtain:
\begin{eqnarray}\label{H2propWk-f-eq-Cauchy}
[C_1(B)\delta^{-2}-\kappa_1^2]\,\|H\|_{L^2(D)}&\leq&\|h\|_{L^2(D)}\nonumber\\
\implies\,\|\hat{H}\|_{L^2(B)}&\leq&\frac{\delta^{2}}{[C_1(B)-\delta^{2}\kappa_1^2]}\|\hat{h}\|_{L^2(B)}.
\end{eqnarray}

Now, rewrite \eqref{H2propWk-f-eq} as,
\begin{equation}\label{H2propWk-f-eq-1}
\left\{\begin{array}{cccc}
 \Delta{ H}&= &-\kappa_{1}^{2} H+h &\text{in}\ D \\
 H&=& 0 & \text{on} \ \partial D.
\end{array}
\right.
\end{equation}
By scaling we can rewrite \eqref{H2propWk-f-eq-1} on the reference body $B$ as,
\begin{equation}\label{H2propWk-f-eq-B}
\left\{\begin{array}{cccc}
 \Delta{ \hat{H}}&= &\delta^2[-\kappa_{1}^{2}\hat{H}+\hat{h}] &\text{in}\ B \\
 \hat{H}&=& 0 & \text{on} \ \partial B.
\end{array}
\right.
\end{equation}
By applying the elliptic regularity theorem, there exists a positive constant $C_2(B)$ such that
\begin{eqnarray}\label{H2propWk-f-eq-Elliptic-B}
\|\hat{H}\|_{H^2(B)}&\leq&C_2(B)\,\|\delta^{2}(-\kappa_1^2\hat{H}+\hat{h})\|_{L^2(B)}\nonumber\\
&\leq&\delta^{2}\,C_2(B)\,[\kappa_1^2\|\hat{H}\|_{L^2(B)}+\|\hat{h}\|_{L^2(B)}]\nonumber\\
&\substack{\leq\\ \eqref{H2propWk-f-eq-Cauchy}}&\delta^{2}\,C_2(B)\left[\kappa_1^2\frac{\delta^{2}}{\vert{C_1(B)-\delta^{2}\kappa_1^2}\vert}+1\right]\|\hat{h}\|_{L^2(B)}\nonumber\\
&=&\delta^{2}\,C_2(B)\left[\frac{\kappa_1^2\delta^{2}}{[C_1(B)-\delta^{2}\kappa_1^2]}+1\right]\vert{\kappa_0^2-\kappa_1^2} \vert\,\vert\partial{B}\vert^\frac{1}{2}=\mathcal{O}(\delta^{2-\alpha}).
\end{eqnarray}
Hence, we are done with proving \eqref{H2propWk-B}.
For any function $\Xi\in H^2(D),\,D\subset\mathbb{R}^N$, one can prove the following by rescaling 
\begin{eqnarray}\label{scalingdomain-g-H2}
\delta^{\frac{N}{2}}\|\hat{\Xi}\|_{H^2(B)}\,\leq\,\|{\Xi}\|_{H^2(D)}\,\leq\,\delta^{\frac{N-4}{2}}\|\hat{\Xi}\|_{H^2(B)}.
\end{eqnarray}
In our case, we have $N=3, \Xi=H$ and so we have the following inequality:
\begin{eqnarray}\label{scalingdomain-H2}
\delta^{\frac{3}{2}}\|\hat{H}\|_{H^2(B)}\,\leq\,\|{H}\|_{H^2(D)}\,\leq\,\delta^{-\frac{1}{2}}\|\hat{H}\|_{H^2(B)}.
\end{eqnarray}

By making use of the right inequality of \eqref{scalingdomain-H2} in \eqref{H2propWk-f-eq-Elliptic-B}, we get
\begin{eqnarray}\label{H2propWk-f-eq-Elliptic-D}
\|{H}\|_{H^2(D)}\,\leq\,\delta^{-\frac{1}{2}}\|\hat{H}\|_{H^2(B)}
&\leq&C_2(B)\left[\frac{\kappa_1^2\delta^{2}}{\vert{C_1(B)-\delta^{2}\kappa_1^2}\vert}+1\right]\vert{\kappa_0^2-\kappa_1^2} \vert\,\vert\partial{B}\vert^\frac{1}{2}\delta^{\frac{3}{2}}=\mathcal{O}(\delta^{\frac{3}{2}-\alpha}).
\end{eqnarray}
Hence, we are done with proving \eqref{H2propWk-D}.
\end{proof}
\begin{lemma}\label{phi-estimate}
$\Vert \phi \Vert_{L^{2}(\partial D)}= \mathcal{O}(\delta)$, if $\mu_1 \sim \delta^{-\beta}, \ \beta \geq 0, $ and $\kappa_{1}^{2} \sim \delta^{-\alpha}, \ 0\leq \alpha<1 $.
\end{lemma}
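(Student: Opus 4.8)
The plan is to read $\phi$ off from its closed form \eqref{sing-obs-5}. Inserting the definition of $A$ and recalling $B=\big[\tfrac12 Id+(K^{\kappa_1}_D)^*\big](S^{\kappa_1}_D)^{-1}u^I$, this can be written as
\[
\phi=\big[Id-AS^{\kappa_0}_D\big]^{-1}\big[-\tfrac12 Id+(K^{\kappa_0}_D)^*\big]^{-1}\Big[\tfrac{\mu_0}{\mu_1}B-\tfrac{\partial u^I}{\partial\nu}\Big].
\]
Lemma \ref{scaling} gives the uniform invertibility of $Id-AS^{\kappa_0}_D$, and its proof in fact establishes this invertibility on $L^2(\partial D)$ with a $\delta$-uniform bound (the leading operator there being $(1-\tfrac{\mu_0}{\mu_1})(\lambda Id+(K^0_D)^*)$); together with the uniform boundedness of $[-\tfrac12 Id+(K^{\kappa_0}_D)^*]^{-1}$ on $L^2(\partial D)$, it therefore suffices to prove the datum estimate $\big\|\tfrac{\mu_0}{\mu_1}B-\tfrac{\partial u^I}{\partial\nu}\big\|_{L^2(\partial D)}=\mathcal{O}(\delta)$.

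Next I would use that $B=\tfrac{\partial W_{\kappa_1}}{\partial\nu}\big|_-$ is the interior Neumann trace of $W_{\kappa_1}$, and split off the incident wave by setting $H:=W_{\kappa_1}-u^I$, so that
\[
\tfrac{\mu_0}{\mu_1}B-\tfrac{\partial u^I}{\partial\nu}=\Big(\tfrac{\mu_0}{\mu_1}-1\Big)\tfrac{\partial u^I}{\partial\nu}+\tfrac{\mu_0}{\mu_1}\,\tfrac{\partial H}{\partial\nu}\Big|_-.
\]
For the first term, $u^I(x)=e^{i\kappa_0 x\cdot d}$ gives $\big|\tfrac{\partial u^I}{\partial\nu}\big|\le\kappa_0$ pointwise, hence $\big\|\tfrac{\partial u^I}{\partial\nu}\big\|_{L^2(\partial D)}^2\le\kappa_0^2|\partial D|=\kappa_0^2\delta^2|\partial B|$, i.e. $\mathcal{O}(\delta)$; moreover $\big|\tfrac{\mu_0}{\mu_1}-1\big|=\mathcal{O}(1)$ since $\tfrac{\mu_0}{\mu_1}\sim\delta^{\beta}$ with $\beta\ge0$. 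For the second term I would rescale to the reference body: with $\hat H(\xi)=H(\delta\xi+z)$ the chain rule gives $\tfrac{\partial H}{\partial\nu}(x)=\delta^{-1}\tfrac{\partial\hat H}{\partial\hat\nu}(\xi)$ while $ds(x)=\delta^2\,ds(\xi)$, so that $\big\|\tfrac{\partial H}{\partial\nu}\big\|_{L^2(\partial D)}=\big\|\tfrac{\partial\hat H}{\partial\hat\nu}\big\|_{L^2(\partial B)}$. The fixed, $\delta$-independent trace inequality on $B$ then bounds this by $\|\hat H\|_{H^2(B)}$, and \eqref{H2propWk-B} of Lemma \ref{H2propWk} yields $\|\hat H\|_{H^2(B)}=\|\hat W_{\kappa_1}-\hat u^I\|_{H^2(B)}=\mathcal{O}(\delta^{2-\alpha})$.

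Combining, the second term is $\tfrac{\mu_0}{\mu_1}\,\mathcal{O}(\delta^{2-\alpha})=\mathcal{O}(\delta^{2-\alpha+\beta})$, and since $\alpha<1$ and $\beta\ge0$ we have $2-\alpha+\beta>1$, so it is negligible against the $\mathcal{O}(\delta)$ produced by the incident-wave term. This gives $\big\|\tfrac{\mu_0}{\mu_1}B-\tfrac{\partial u^I}{\partial\nu}\big\|_{L^2(\partial D)}=\mathcal{O}(\delta)$, and hence $\|\phi\|_{L^2(\partial D)}=\mathcal{O}(\delta)$, as claimed.

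The step I expect to require the most care is the rescaling of the interior Neumann trace of $H$: one must track the $\delta^{-1}$ coming from the gradient against the $\delta^2$ from the surface measure before feeding in the $H^2(B)$ bound of Lemma \ref{H2propWk}, and it is precisely here that the hypothesis $\alpha<1$ enters, since it keeps $\kappa_1^2\delta^2=\delta^{2-\alpha}$ a genuine gain over the $\mathcal{O}(\delta)$ main term. A secondary point is to invoke the layer-potential inverses from Lemma \ref{scaling} in a $\delta$-uniform way on $L^2(\partial D)$, since the datum $B$ is only an $H^{1/2}(\partial D)$ object and no better.
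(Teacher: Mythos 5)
Your proposal is correct and follows essentially the same route as the paper's own proof: the same closed form \eqref{sing-obs-5} for $\phi$, the same splitting of the datum into $\bigl(\tfrac{\mu_0}{\mu_1}-1\bigr)\tfrac{\partial u^I}{\partial\nu}$ plus $\tfrac{\mu_0}{\mu_1}\tfrac{\partial H}{\partial\nu}\big\vert_{-}$ with $H=W_{\kappa_1}-u^I$, the same scale-invariance computation showing $\Vert \tfrac{\partial H}{\partial\nu}\Vert_{L^2(\partial D)}=\Vert \tfrac{\partial \hat H}{\partial\nu}\Vert_{L^2(\partial B)}$, and the same use of Lemma \ref{H2propWk} together with the trace theorem on $B$ to obtain the $\mathcal{O}(\delta^{2-\alpha})$ remainder, which is dominated by the $\mathcal{O}(\delta)$ incident-wave term since $\alpha<1$ and $\tfrac{\mu_0}{\mu_1}\sim\delta^{\beta}$ with $\beta\geq 0$. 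Your observation that the $\delta$-uniform $L^2$ invertibility of $Id-AS^{\kappa_0}_D$ is what the proof of Lemma \ref{scaling} actually supplies is exactly how the paper invokes it.
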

\begin{proof}
We can write
\begin{equation}
\begin{aligned}
\phi &= \Big[Id-A S^{\kappa_{0}}_{D} \Big]^{-1} \Big[-\frac{1}{2} Id+ (K^{\kappa_{0}}_{D})^{*} \Big]^{-1} \Big[(\frac{\mu_{0}}{\mu_{1}}-1) \frac{\partial u^{I}}{\partial \nu} \Big] +\Big[Id-A S^{\kappa_{0}}_{D} \Big]^{-1} \Big[-\frac{1}{2} Id+ (K^{\kappa_{0}}_{D})^{*} \Big]^{-1} \frac{\mu_{0}}{\mu_{1}} \Big[B- \frac{\partial u^{I}}{\partial \nu} \Big] \\
&=\Big[Id-A S^{\kappa_{0}}_{D} \Big]^{-1} \Big[-\frac{1}{2} Id+ (K^{\kappa_{0}}_{D})^{*} \Big]^{-1} \Big[(\frac{\mu_{0}}{\mu_{1}}-1) \frac{\partial u^{I}}{\partial \nu} \Big]\\
&\qquad +\Big[Id-A S^{\kappa_{0}}_{D} \Big]^{-1} \Big[-\frac{1}{2} Id 
+ (K^{\kappa_{0}}_{D})^{*} \Big]^{-1} \frac{\mu_{0}}{\mu_{1}} \Big[\frac{\partial W_{\kappa_{1}}}{\partial \nu}\Big\vert_{-}- \frac{\partial u^{I}}{\partial \nu} \Big],
\end{aligned}
\notag
\end{equation}
which implies
% % % \begin{equation}
% % % \begin{aligned}
% % % \Vert \phi \Vert_{L^{2}(\partial D)} &\leq C\textcolor{red}{\frac{\mu_0}{\mu_1}} \Big\Vert \frac{\partial u^{I}}{\partial \nu} \Big\Vert_{L^{2}(\partial D)}+ C \textcolor{red}{\frac{\mu_0}{\mu_1}}\Big\Vert \frac{\partial W_{\kappa_{1}}}{\partial \nu}\Big\vert_{-}-\frac{\partial u^{I}}{\partial \nu} \Big\Vert_{L^{2}(\partial D)} \textcolor{red}{\mbox{ if } \mu_1\rightarrow 0}\\
% % %  &\leq C \textcolor{red}{\frac{\mu_0}{\mu_1}}\Big(\delta+\Vert \frac{\partial H}{\partial \nu} \Vert_{L^{2}(\partial D)}\Big).
% % % \end{aligned}
% % %  \label{estim-phi-1}
% % % \end{equation}
\begin{equation}
\Vert \phi \Vert_{L^{2}(\partial D)}\leq 
C \Big\Vert \frac{\partial u^{I}}{\partial \nu} \Big\Vert_{L^{2}(\partial D)}+ C \frac{\mu_0}{\mu_1} \Big\Vert \frac{\partial W_{\kappa_{1}}}{\partial \nu}\Big\vert_{-}-\frac{\partial u^{I}}{\partial \nu} \Big\Vert_{L^{2}(\partial D)} \leq C \Big(\delta+ \frac{\mu_0}{\mu_1} \Vert \frac{\partial H}{\partial \nu} \Vert_{L^{2}(\partial D)}\Big).
\label{estim-phi-1}
\end{equation}
As $\nabla_{\xi} \hat{H} = \delta \nabla_{x} H$, then
\begin{equation}
\begin{aligned}
\Big\Vert \frac{\partial H}{\partial \nu} \Big\Vert_{L^{2}(\partial D)}&=\Big(\int_{\partial D} \Big\vert \frac{\partial H(x)}{\partial \nu} \Big\vert^{2}\ ds(x) \Big)^{\frac{1}{2}}= \Big(\int_{\partial D} \vert (\nabla_{x}H)(x).\nu \vert^{2}\ ds(x) \Big)^{\frac{1}{2}}=\Big(\delta^{2}.\delta^{-2} \int_{\partial B} \vert (\nabla_{\xi}\hat{H})(\xi).\nu \vert^{2}\ ds(\xi) \Big)^{\frac{1}{2}} \\
&=\Big\Vert \frac{\partial \hat{H}}{\partial \nu} \Big\Vert_{L^{2}(\partial B)}.
\end{aligned}
\label{estim-phi-2}
\end{equation}
From the estimate (see \eqref{H2propWk-f-eq-Elliptic-B}) $$\Vert \hat{H} \Vert_{H^{2}(B)} =\mathcal{O}(\delta^{2-\alpha}),$$
we have
\begin{equation}
\Vert \nabla \hat{H} \Vert_{H^{1}(B)} \leq \Vert \hat{H} \Vert_{H^{2}(B)} \leq C \delta^{2-\alpha}.
\notag
\end{equation}
Using trace theorem, it then follows that
\begin{equation}
\Vert \frac{\partial \hat{H}}{\partial \nu} \Vert_{H^{\frac{1}{2}}(\partial B)} \leq C \Vert \nabla \hat{H} \Vert_{H^{1}(B)} \leq C \delta^{2-\alpha}
\notag
\end{equation}
which implies, using \eqref{estim-phi-2},
\begin{equation}
\Vert \frac{\partial H}{\partial \nu} \Vert_{L^{2}(\partial D)}= \Vert \frac{\partial \hat{H}}{\partial \nu} \Vert_{L^{2}(\partial B)} \leq  \Vert \frac{\partial \hat{H}}{\partial \nu} \Vert_{H^{\frac{1}{2}}(\partial B)} \leq C \delta^{2-\alpha}.
\notag
\end{equation}
Using this in \eqref{estim-phi-1}, we have the required estimate.

%
\begin{comment}
\begin{equation}
\begin{aligned}
\Vert \phi \Vert_{L^{2}(\partial D)} &\leq C \Vert \tilde{A} \Vert_{\mathcal{L}(L^{2}(\partial D),L^{2}(\partial D))} \Vert \frac{\partial u^{I}}{\partial \nu} \Vert_{L^{2}(\partial D)} \\
&\leq C \delta \Vert \tilde{A} \Vert_{\mathcal{L}(L^{2}(\partial D),L^{2}(\partial D))} \Vert (\frac{\partial u^{I}}{\partial \nu})^{\wedge} \Vert_{L^{2}(\partial B_{1})} \\
&\leq C \delta \Vert (S^{\kappa_{0}}_{D})^{-1} S^{\kappa_{1}}_{D} \Vert_{\mathcal{L}(L^{2}(\partial D),L^{2}(\partial D))} \Vert [\frac{1}{2} Id+(K^{\kappa_{1}}_{D})^{*}]^{-1}\Vert_{\mathcal{L}(L^{2}(\partial D),L^{2}(\partial D))} \Vert (\frac{\partial u^{I}}{\partial \nu})^{\wedge} \Vert_{L^{2}(\partial B_{1})}
\end{aligned}
\notag
\end{equation}
\end{comment}
%
\end{proof}
\begin{lemma}
$\Vert \psi \Vert_{L^{2}(\partial D)}=\mathcal{O}(1), \mbox{ if }\,\mu_1\sim \delta^{-\beta},\ \beta \geq 0,$ and $\kappa_{1}^{2} \sim \delta^{-\alpha},\ 0 \leq \alpha <1 $.
\end{lemma}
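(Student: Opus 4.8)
The plan is to exploit the representation \eqref{sing-obs-3}, namely $\psi=(S^{\kappa_{1}}_{D})^{-1} u^{I}+(S^{\kappa_{1}}_{D})^{-1} S^{\kappa_{0}}_{D} \phi$, and estimate the two terms separately. For the second term, \eqref{single} shows that $(S^{\kappa_{1}}_{D})^{-1} S^{\kappa_{0}}_{D}$ is uniformly bounded on $L^{2}(\partial D)$, while Lemma \ref{phi-estimate} gives $\Vert \phi \Vert_{L^{2}(\partial D)}=\mathcal{O}(\delta)$; hence $\Vert (S^{\kappa_{1}}_{D})^{-1} S^{\kappa_{0}}_{D} \phi \Vert_{L^{2}(\partial D)}=\mathcal{O}(\delta)$. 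It therefore only remains to bound $\psi_{0}:=(S^{\kappa_{1}}_{D})^{-1} u^{I}$, and the claim will follow by the triangle inequality.

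To handle $\psi_{0}$, I would peel off the smooth part of $S^{\kappa_{1}}_{D}$ exactly as in the proof of Lemma \ref{scaling}. Writing $S^{\kappa_{1}}_{D}=S^{0}_{D}+S^{d_{\kappa_{1}}}_{D}$ and using the bound $\Vert (S^{0}_{D})^{-1}S^{d_{\kappa_{1}}}_{D}\Vert_{\mathcal{L}(L^{2}(\partial D),L^{2}(\partial D))}\leq C_{1}\delta=\mathcal{O}(\delta^{1-\alpha})<1$ valid for $\delta$ small (this is exactly where the hypothesis $\alpha<1$ is used, \emph{cf.} \eqref{neumannc}), one gets
\begin{equation}
(S^{\kappa_{1}}_{D})^{-1}=\Big(Id+(S^{0}_{D})^{-1}S^{d_{\kappa_{1}}}_{D}\Big)^{-1}(S^{0}_{D})^{-1},
\notag
\end{equation}
with the first factor uniformly bounded. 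Thus $\Vert \psi_{0}\Vert_{L^{2}(\partial D)}\leq C\,\Vert (S^{0}_{D})^{-1}u^{I}\Vert_{L^{2}(\partial D)}$, and the whole problem reduces to the Laplace single-layer operator.

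The key computation is then a scaling argument. Since $S^{0}_{D}$ has kernel $\frac{1}{4\pi|x-y|}$, the change of variables $x=\delta\xi+z$, $y=\delta\eta+z$ gives $(S^{0}_{D}\psi)^{\wedge}=\delta\, S^{0}_{B}\hat\psi$, whence $\big((S^{0}_{D})^{-1}g\big)^{\wedge}=\delta^{-1}(S^{0}_{B})^{-1}\hat g$. Combining this with the $L^{2}$ scaling $\Vert f\Vert_{L^{2}(\partial D)}=\delta\,\Vert \hat f\Vert_{L^{2}(\partial B)}$, the factors $\delta^{-1}$ and $\delta$ cancel and
\begin{equation}
\Vert (S^{0}_{D})^{-1}u^{I}\Vert_{L^{2}(\partial D)}=\Vert (S^{0}_{B})^{-1}\hat{u}^{I}\Vert_{L^{2}(\partial B)}\leq \Vert (S^{0}_{B})^{-1}\Vert_{\mathcal{L}(H^{1}(\partial B),L^{2}(\partial B))}\,\Vert \hat{u}^{I}\Vert_{H^{1}(\partial B)}.
\notag
\end{equation}
Since $\hat{u}^{I}(\xi)=e^{i\kappa_{0}z\cdot d}\,e^{i\kappa_{0}\delta\xi\cdot d}$ is uniformly bounded in $H^{1}(\partial B)$ as $\delta\to 0$ (every tangential derivative produces only the harmless factor $\kappa_{0}\delta$), the right-hand side is $\mathcal{O}(1)$. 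Hence $\Vert \psi_{0}\Vert_{L^{2}(\partial D)}=\mathcal{O}(1)$, and together with the $\mathcal{O}(\delta)$ bound for the second term we conclude $\Vert \psi\Vert_{L^{2}(\partial D)}=\mathcal{O}(1)$.

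The only delicate point is this scaling of the Laplace single-layer inverse: one must track that $(S^{0}_{D})^{-1}$ blows up like $\delta^{-1}$ and that this is compensated \emph{exactly} by the $L^{2}(\partial D)$-norm scaling, so that the datum $u^{I}$ (which tends to a nonzero constant on $\partial B$) produces an $\mathcal{O}(1)$, and not a diverging, density. Everything else is a direct application of the uniform bounds already established in Lemma \ref{scaling} and Lemma \ref{phi-estimate}.
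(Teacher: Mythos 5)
Your proposal is correct and takes essentially the same route as the paper: the paper likewise starts from the representation \eqref{sing-obs-3}, bounds the second term using the uniform boundedness \eqref{single} of $(S^{\kappa_{1}}_{D})^{-1} S^{\kappa_{0}}_{D}$ together with Lemma \ref{phi-estimate}, and bounds the first term by $\Vert (S^{\kappa_{1}}_{D})^{-1} \Vert_{\mathcal{L}(H^{1}(\partial D),L^{2}(\partial D))}\,\Vert u^{I} \Vert_{H^{1}(\partial D)} \leq C\delta^{-1}\cdot\delta$. The only difference is that where the paper simply invokes the $\delta^{-1}$ scaling of $(S^{\kappa_{1}}_{D})^{-1}$, you re-derive it via the Neumann-series decomposition \eqref{expansdsdinv}--\eqref{neumannc} (where the hypothesis $\alpha<1$ enters) and the exact scaling of the Laplace single-layer inverse, which is a correct, self-contained verification of the same fact using the machinery already set up in the proof of Lemma \ref{scaling}.
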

\begin{proof}
From the first identity in \eqref{sing-obs-1}, we can write
\begin{equation}
\psi=(S^{\kappa_{1}}_{D})^{-1} u^{I}+(S^{\kappa_{1}}_{D})^{-1} S^{\kappa_{0}}_{D} \phi.
\notag
\end{equation}
Therefore using \eqref{single}, we conclude
% \begin{equation}
% \begin{aligned}
% \Vert \psi \Vert_{L^{2}(\partial D)} &\leq \Vert (S^{\kappa_{1}}_{D})^{-1} \Vert_{\mathcal{L}(H^{1}(\partial D),L^{2}(\partial D))} \Vert u^{I} \Vert_{H^{1}(\partial D)} + \Vert (S^{\kappa_{1}}_{D})^{-1} S^{\kappa_{0}}_{D} \Vert_{\mathcal{L}(L^{2}(\partial D),L^{2}(\partial D))} \Vert \phi \Vert_{L^{2}(\partial D)} \\
% & \leq C \delta^{-1} \delta + C\textcolor{red}{\frac{\mu_0}{\mu_1}} \delta \Vert (S^{\kappa_{1}}_{D})^{-1} S^{\kappa_{0}}_{D} \Vert_{\mathcal{L}(L^{2}(\partial D),L^{2}(\partial D))} \\
% &\leq C+ C \textcolor{red}{\frac{\mu_0}{\mu_1}}\delta,
% \end{aligned}
% \notag
% \end{equation}
\begin{align*}
\Vert \psi \Vert_{L^{2}(\partial D)} &\leq\Vert (S^{\kappa_{1}}_{D})^{-1} \Vert_{\mathcal{L}(H^{1}(\partial D),L^{2}(\partial D))} \Vert u^{I} \Vert_{H^{1}(\partial D)} + \Vert (S^{\kappa_{1}}_{D})^{-1} S^{\kappa_{0}}_{D} \Vert_{\mathcal{L}(L^{2}(\partial D),L^{2}(\partial D))} \Vert \phi \Vert_{L^{2}(\partial D)} \\
&\leq C \delta^{-1} \delta + C \delta \Vert (S^{\kappa_{1}}_{D})^{-1} S^{\kappa_{0}}_{D} \Vert_{\mathcal{L}(L^{2}(\partial D),L^{2}(\partial D))} \leq C+ C \delta.
\end{align*}
% % % and hence
% % % \begin{equation}
% % % \Vert \psi \Vert_{L^{2}(\partial D)}= \mathcal{O}(1\textcolor{red}{+\frac{\mu_0}{\mu_1}\delta}).
% % % \notag
% % % \end{equation}
\end{proof}
We can rewrite the first identity in \eqref{sing-obs-1} as
\begin{equation}
\begin{aligned}
S^{0}_{D}\psi(x)-S^{0}_{D}\phi(x)&=(S^{\kappa_{0}}_{D}\phi(x) -S^{0}_{D}\phi(x)) -(S^{\kappa_{1}}_{D}\psi(x)-S^{0}_{D} \psi(x)) +u^{I}(z)\\ 
&\ \ \ \ \ \ \ \ \ + \nabla u^{I}(z)(x-z)+\sum_{\vert j\vert=2}^{+\infty} \frac{1}{j !} \partial_{j} u^{I}(z) (x-z)^{j},
\end{aligned}
\label{sing-obs-50}  
\end{equation}
and the second identity in \eqref{sing-obs-1} as
\begin{equation}
\begin{aligned}
\frac{1}{\mu_{1}} [\frac{1}{2} Id+(K^{0}_{D})^{*}]\psi-\frac{1}{\mu_{0}} [-\frac{1}{2} Id+(K^{0}_{D})^{*}]\phi&=\frac{1}{\mu_{1}}[(K^{0}_{D})^{*}-(K^{\kappa_{1}}_{D})^{*}]\psi-\frac{1}{\mu_{0}}[(K^{0}_{D})^{*}-(K^{\kappa_{0}}_{D})^{*}]\phi \\
& \ \ \ \ \ \ +\frac{1}{\mu_{0}}[\nabla u^{I}(z).\frac{\partial}{\partial \nu}(x-z)+\sum_{\vert j \vert=2}^{+\infty} \frac{1}{j !} \partial_{j} u^{I}(z) \frac{\partial}{\partial \nu} (x-z)^{j}].
\end{aligned}
\label{sing-obs-51}
\end{equation}
\begin{lemma}
$\Vert S^{\kappa_{0}}_{D}\phi- S^{0}_{D}\phi \Vert_{H^{1}(\partial D)} = \mathcal{O}(\delta^3),\ \mbox{ if }\,\mu_1\sim \delta^{-\beta},\ \beta \geq 0,$ and $\kappa_{1}^{2} \sim \delta^{-\alpha},\ 0 \leq \alpha <1 $.
\end{lemma}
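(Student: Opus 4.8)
The plan is to exploit that in this subsection $\epsilon_0$, hence $\kappa_0$, is constant, so $G^{\kappa_0}(x,y)=\frac{e^{i\kappa_0|x-y|}}{4\pi|x-y|}$ exactly and the difference $S^{\kappa_0}_D\phi-S^0_D\phi$ is precisely the smooth-kernel operator $S^{d_{\kappa_0}}_D\phi$ introduced in the proof of Lemma \ref{scaling} (see \eqref{neumannc}), namely
\begin{equation*}
(S^{\kappa_0}_D\phi-S^0_D\phi)(x)=\int_{\partial D}\frac{e^{i\kappa_0|x-y|}-1}{4\pi|x-y|}\phi(y)\,ds(y).
\end{equation*}
Since $\|\phi\|_{L^2(\partial D)}=\mathcal{O}(\delta)$ by Lemma \ref{phi-estimate}, it suffices to prove the operator bound $\|S^{d_{\kappa_0}}_D\|_{\mathcal{L}(L^2(\partial D),H^1(\partial D))}=\mathcal{O}(\delta^2)$, and the claim follows by multiplying the two estimates.

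First I would rescale to the reference body: with $x=\delta\xi+z$, $y=\delta\eta+z$, $\xi,\eta\in\partial B$, and $ds(y)=\delta^2\,ds(\eta)$, the pulled-back function reads
\begin{equation*}
(S^{d_{\kappa_0}}_D\phi)^\wedge(\xi)=\delta\int_{\partial B}k_\delta(\xi,\eta)\,\hat\phi(\eta)\,ds(\eta),\qquad k_\delta(\xi,\eta):=\frac{e^{i\kappa_0\delta|\xi-\eta|}-1}{4\pi|\xi-\eta|}.
\end{equation*}
Expanding the exponential gives $k_\delta(\xi,\eta)=\frac{i\kappa_0\delta}{4\pi}-\frac{\kappa_0^2\delta^2|\xi-\eta|}{8\pi}+\mathcal{O}(\delta^3)$, so $k_\delta$ is a smooth kernel (the $|\xi-\eta|^{-1}$ singularity cancels) with $|k_\delta|=\mathcal{O}(\delta)$ uniformly on $\partial B\times\partial B$.

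The crucial observation, and the point on which the gain from $\delta^2$ to $\delta^3$ hinges, is that the leading $\mathcal{O}(\delta)$ term of $k_\delta$ is constant in $\xi$ and is therefore annihilated by the surface gradient $\nabla^\xi_{\partial B}$; consequently $|\nabla_\xi k_\delta(\xi,\eta)|=\mathcal{O}(\delta^2)$, one order smaller than $k_\delta$ itself. A Hilbert--Schmidt (or Schur) estimate then gives $\|(S^{d_{\kappa_0}}_D\phi)^\wedge\|_{L^2(\partial B)}=\mathcal{O}(\delta^2)\|\hat\phi\|_{L^2(\partial B)}$ from $|k_\delta|=\mathcal{O}(\delta)$, while differentiating under the integral sign and using $|\nabla_\xi k_\delta|=\mathcal{O}(\delta^2)$ yields $\|\nabla_{\partial B}(S^{d_{\kappa_0}}_D\phi)^\wedge\|_{L^2(\partial B)}=\mathcal{O}(\delta^3)\|\hat\phi\|_{L^2(\partial B)}$.

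Finally I would transfer these back to $\partial D$ via the standard surface-scaling relations $\|F\|_{L^2(\partial D)}=\delta\|\hat F\|_{L^2(\partial B)}$ and $\|\nabla_{\partial D}F\|_{L^2(\partial D)}=\|\nabla_{\partial B}\hat F\|_{L^2(\partial B)}$, which give
\begin{equation*}
\|S^{d_{\kappa_0}}_D\phi\|_{H^1(\partial D)}^2=\delta^2\|(S^{d_{\kappa_0}}_D\phi)^\wedge\|_{L^2(\partial B)}^2+\|\nabla_{\partial B}(S^{d_{\kappa_0}}_D\phi)^\wedge\|_{L^2(\partial B)}^2=\mathcal{O}(\delta^6)\,\|\hat\phi\|_{L^2(\partial B)}^2.
\end{equation*}
Since $\|\hat\phi\|_{L^2(\partial B)}=\delta^{-1}\|\phi\|_{L^2(\partial D)}=\mathcal{O}(1)$, both contributions are $\mathcal{O}(\delta^3)$ and the result follows. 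The main obstacle is exactly the $H^1$-seminorm (gradient) estimate: treating $k_\delta$ as a generic $\mathcal{O}(\delta)$ kernel only yields $\delta^2$, so one genuinely needs the cancellation of the constant leading term, together with careful bookkeeping of the $\delta$-powers arising from the surface measure, the prefactor, and the gradient rescaling.
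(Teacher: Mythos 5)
Your proof is correct, and at the top level it follows the same factorization as the paper: bound $\Vert S^{\kappa_{0}}_{D}\phi- S^{0}_{D}\phi \Vert_{H^{1}(\partial D)}$ by $\Vert S^{\kappa_{0}}_{D}-S^{0}_{D}\Vert_{\mathcal{L}(L^{2}(\partial D),H^{1}(\partial D))}\,\Vert \phi \Vert_{L^{2}(\partial D)}$ and invoke Lemma \ref{phi-estimate} for $\Vert \phi \Vert_{L^{2}(\partial D)}=\mathcal{O}(\delta)$. The only difference is in how the $\mathcal{O}(\delta^{2})$ operator bound is obtained: the paper's proof is two lines, simply citing this estimate from \cite{Challa-Sini-1, Challa-Sini-2}, whereas you re-derive it from scratch by rescaling to $\partial B$, Taylor-expanding the kernel $k_\delta(\xi,\eta)=\frac{e^{i\kappa_0\delta|\xi-\eta|}-1}{4\pi|\xi-\eta|}$, and running Hilbert--Schmidt estimates; your bookkeeping of the powers of $\delta$ (the $\delta$ prefactor from the surface measure, the relations $\|F\|_{L^2(\partial D)}=\delta\|\hat F\|_{L^2(\partial B)}$ and $\|\nabla_{\partial D}F\|_{L^2(\partial D)}=\|\nabla_{\partial B}\hat F\|_{L^2(\partial B)}$, and $\|\hat\phi\|_{L^2(\partial B)}=\delta^{-1}\|\phi\|_{L^2(\partial D)}$) is accurate and matches the scaling identities the paper itself uses elsewhere (e.g.\ in the proof of Lemma \ref{phi-estimate}). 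What your version buys is a self-contained proof that also makes the mechanism explicit: for the $H^1$-seminorm a generic $\mathcal{O}(\delta)$ kernel would only give an $\mathcal{O}(\delta)$ operator norm, and one genuinely needs that the leading term $\frac{i\kappa_0\delta}{4\pi}$ of $k_\delta$ is constant and killed by the surface gradient, so that $|\nabla_\xi k_\delta|=\mathcal{O}(\delta^2)$ — this cancellation is invisible in the paper's citation-based proof. Your restriction to the constant-$\epsilon_0$ setting (so that the Green's function is exactly $\Phi^{\kappa_0}$ and the difference is the smooth-kernel operator $S^{d_{\kappa_0}}_D$) is also the right scoping, consistent with the paper's later remark that for variable $\epsilon_0$ the same norm degrades to $\mathcal{O}(\delta^{2}\vert \log\delta\vert)$.
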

\begin{proof}
Using Lemma $3.5$, we can write
\begin{equation}
\Vert S^{\kappa_{0}}_{D}\phi- S^{0}_{D}\phi \Vert_{H^{1}(\partial D)}\leq \Vert S^{\kappa_{0}}_{D} -S^{0}_{D} \Vert_{\mathcal{L}(L^{2}(\partial D),H^{1}(\partial D))} \Vert \phi \Vert_{L^{2}(\partial D)} 
\leq C \delta^{2}. \delta= \mathcal{O}(\delta^{3}).
\notag
\end{equation}
\end{proof}
Let us also note that $u^{I}, \nabla u^{I} (x-z)$ are $\mathcal{O}(\delta)$ in $H^{1}(\partial D)$, $\sum_{\vert j \vert=2} \frac{1}{j !} \partial_{j} u^{I}(z) \frac{\partial}{\partial \nu} (x-z)^{j} $ 
is $\mathcal{O}(\delta^2)$ in $H^{1}(\partial D)$. Therefore ignoring the errors of $                                                                      \mathcal{O}(\delta^3) $ in \eqref{sing-obs-50}, we obtain
\begin{equation}
\begin{aligned}
S^{0}_{D}\psi(x)-S^{0}_{D}\phi(x)&= -(S^{\kappa_{1}}_{D}\psi(x)-S^{0}_{D} \psi(x)) +u^{I}(z)\\ 
&\ \ \ \ \ \ \ \ \ + \nabla u^{I}(z)(x-z)+\sum_{\vert j\vert=2} \frac{1}{j !} \partial_{j} u^{I}(z) (x-z)^{j}+ \mathcal{O}(\delta^{3}).
\end{aligned}
\label{sing-obs-52}  
\end{equation}
\begin{lemma}
$\Vert (K^{0}_{D})^{*} \phi-(K^{\kappa_{0}}_{D})^{*} \phi \Vert_{L^{2}(\partial D)}=  \mathcal{O}(\delta^3), \ \mbox{ if }\,\mu_1\sim \delta^{-\beta},\ \beta \geq 0,$ and $\kappa_{1}^{2} \sim \delta^{-\alpha},\ 0 \leq \alpha <1 $.
\end{lemma}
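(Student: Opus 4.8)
The plan is to mimic exactly the proof of the preceding lemma (the one bounding $\Vert S^{\kappa_{0}}_{D}\phi- S^{0}_{D}\phi \Vert_{H^{1}(\partial D)}$): I would estimate the $L^2$-norm of the difference by the operator norm of the kernel difference times the norm of the density $\phi$. Concretely, I would start from the submultiplicative bound
\begin{equation}
\Vert (K^{0}_{D})^{*} \phi-(K^{\kappa_{0}}_{D})^{*} \phi \Vert_{L^{2}(\partial D)} \leq \Vert (K^{0}_{D})^{*}-(K^{\kappa_{0}}_{D})^{*} \Vert_{\mathcal{L}(L^{2}(\partial D),L^{2}(\partial D))}\, \Vert \phi \Vert_{L^{2}(\partial D)}.
\notag
\end{equation}

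Then I would invoke the two ingredients already established above. First, the operator-norm bound $\Vert (K^{\kappa_{0}}_{D})^{*}-(K^{0}_{D})^{*} \Vert_{\mathcal{L}(L^{2}(\partial D),L^{2}(\partial D))} \sim \delta^{2}$ that was used inside the proof of Lemma \ref{estimate-in-lambda} (and taken from \cite{Challa-Sini-1, Challa-Sini-2}); this is the expansion of the Helmholtz single-layer kernel $\frac{e^{i\kappa_0|x-y|}-1}{4\pi|x-y|}$ around the Laplace kernel, whose leading term contributes a factor scaling like $\delta^2$ after rescaling to the reference body $B$. Second, the density estimate $\Vert \phi \Vert_{L^{2}(\partial D)} = \mathcal{O}(\delta)$ from Lemma \ref{phi-estimate}. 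Multiplying the two yields $C\,\delta^{2}\cdot\delta=\mathcal{O}(\delta^{3})$, which is the claim.

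There is essentially no real obstacle in this lemma; it is a one-line consequence of two earlier estimates, exactly parallel to the $S^{\kappa_0}_D$ counterpart. The only point to keep in mind is that the $\delta^{2}$ rate for the kernel difference relies on $\kappa_0$ being constant, which is precisely the standing hypothesis of this subsection. As the Remark following Lemma \ref{estimate-in-lambda} indicates, for variable $\kappa_0$ this difference degrades to $\mathcal{O}(\delta^{2}\vert\log\delta\vert)$, and the present bound would correspondingly become $\mathcal{O}(\delta^{3}\vert\log\delta\vert)$; but since here $\epsilon_0$, and hence $\kappa_0$, is constant, the clean $\delta^{2}$ bound applies and the stated $\mathcal{O}(\delta^{3})$ follows.
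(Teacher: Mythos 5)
Your proposal is correct and coincides with the paper's own proof: the paper applies exactly the same submultiplicative bound, the $\mathcal{O}(\delta^{2})$ operator-norm estimate for $(K^{\kappa_0}_D)^{*}-(K^{0}_D)^{*}$ from \cite{Challa-Sini-1, Challa-Sini-2}, and the bound $\Vert\phi\Vert_{L^{2}(\partial D)}=\mathcal{O}(\delta)$ from Lemma \ref{phi-estimate} to conclude $C\,\delta^{2}\cdot\delta=\mathcal{O}(\delta^{3})$. Your side remark about the $\mathcal{O}(\delta^{2}\vert\log\delta\vert)$ degradation for variable $\kappa_0$ is also consistent with the paper's treatment in the variable-$\epsilon_0$ section.
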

\begin{proof}
Using Lemma $3.3$, we can write
\begin{equation}
\Vert (K^{0}_{D})^{*} \phi-(K^{\kappa_{0}}_{D})^{*} \phi \Vert_{L^{2}(\partial D)}\leq \Vert (K^{0}_{D})^{*}-(K^{\kappa_{0}}_{D})^{*}\Vert_{\mathcal{L}(L^{2}(\partial D),L^{2}(\partial D))} \Vert \phi \Vert_{L^{2}(\partial D)} 
\leq C \delta^{2}. \delta= \mathcal{O}(\delta^{3}).
\notag
%\leq C \delta^{2}. \delta= \mathcal{O}(\delta^{3}).
%\notag
\end{equation}
\end{proof}
Again, noting that $\nabla u^{I}(z).\frac{\partial}{\partial \nu}(x-z), \sum_{\vert j \vert=2} \frac{1}{j !} \partial_{j} u^{I}(z) \frac{\partial}{\partial \nu} (x-z)^{j} $ are of order 
$\mathcal{O}(\delta) $ and $\mathcal{O}(\delta^2)$ in $L^{2}(\partial D)$ respectively, and ignoring the terms of order $\mathcal{O}(\delta^3)$ in \eqref{sing-obs-51}, we obtain
\begin{equation}
\begin{aligned}
\frac{1}{\mu_{1}} [\frac{1}{2} Id+(K^{0}_{D})^{*}]\psi-\frac{1}{\mu_{0}} [-\frac{1}{2} Id+(K^{0}_{D})^{*}]\phi=\frac{1}{\mu_{1}}[(K^{0}_{D})^{*}-(K^{\kappa_{1}}_{D})^{*}]\psi &+\frac{1}{\mu_{0}}[\nabla u^{I}(z).\frac{\partial}{\partial \nu}(x-z)\\
&+\sum_{\vert j \vert=2} \frac{1}{j !} \partial_{j} u^{I}(z) \frac{\partial}{\partial \nu} (x-z)^{j}]+\mathcal{O}(\delta^{3}).
\end{aligned}
\label{sing-obs-53}
\end{equation}
Keeping in mind the linearity of the identities, we split \eqref{sing-obs-52} and \eqref{sing-obs-53} into seven problems as following.\\
\\
\textbf{\textit{Problem P1}}:\\
\begin{equation}
\begin{aligned}
S^{0}_{D}\psi_{1}(x)-S^{0}_{D}\phi_{1}(x)&= u^{I}(z)+ \nabla u^{I}(z)(x-z),\\
\frac{1}{\mu_{1}} [\frac{1}{2} Id+(K^{0}_{D})^{*}]\psi_{1}-\frac{1}{\mu_{0}} [-\frac{1}{2} Id+(K^{0}_{D})^{*}]\phi_{1} &=\frac{1}{\mu_{0}}\nabla u^{I}(z).\frac{\partial}{\partial \nu}(x-z).
\end{aligned}
\label{p1}
\end{equation}
\textbf{\textit{Problem P2}}:\\
\begin{equation}
\begin{aligned}
S^{0}_{D}\psi_{2}(x)-S^{0}_{D}\phi_{2}(x)&=0,\\
\frac{1}{\mu_{1}} [\frac{1}{2} Id+(K^{0}_{D})^{*}]\psi_{2}-\frac{1}{\mu_{0}} [-\frac{1}{2} Id+(K^{0}_{D})^{*}]\phi_{2}&=\frac{1}{\mu_{0}}\sum_{\vert j \vert=2} \frac{1}{j !} \partial_{j} u^{I}(z) \frac{\partial}{\partial \nu} (x-z)^{j}.
\end{aligned}
\label{p2}
\end{equation}
\textbf{\textit{Problem P3}}:\\
\begin{equation}
\begin{aligned}
&S^{0}_{D}\psi_{3}(x)-S^{0}_{D}\phi_{3}(x)=\sum_{\vert j\vert=2} \frac{1}{j !} \partial_{j} u^{I}(z) (x-z)^{j}-\frac{1}{\vert \partial D \vert} \int_{\partial D} (\sum_{\vert j\vert=2} \frac{1}{j !} \partial_{j} u^{I}(z) (x-z)^{j})\ ds(x),\\
&\frac{1}{\mu_{1}} [\frac{1}{2} Id+(K^{0}_{D})^{*}]\psi_{3}-\frac{1}{\mu_{0}} [-\frac{1}{2} Id+(K^{0}_{D})^{*}]\phi_{3}=0.
\end{aligned}
\label{p3}
\end{equation}
\textbf{\textit{Problem P4}}:\\
\begin{equation}
\begin{aligned}
&S^{0}_{D}\psi_{4}(x)-S^{0}_{D}\phi_{4}(x)=\frac{1}{\vert \partial D \vert} \int_{\partial D} (\sum_{\vert j\vert=2} \frac{1}{j !} \partial_{j} u^{I}(z) (x-z)^{j})\ ds(x),\\
&\frac{1}{\mu_{1}} [\frac{1}{2} Id+(K^{0}_{D})^{*}]\psi_{4}-\frac{1}{\mu_{0}} [-\frac{1}{2} Id+(K^{0}_{D})^{*}]\phi_{4}=0.
\end{aligned}
\label{p4}
\end{equation}
\textbf{\textit{Problem P5}}:\\
\begin{equation}
\begin{aligned}
&S^{0}_{D}\psi_{5}(x)-S^{0}_{D}\phi_{5}(x)= -(S^{\kappa_{1}}_{D}\psi(x)-S^{0}_{D} \psi(x)) +\frac{1}{\vert \partial D \vert} \int_{\partial D} (S^{\kappa_{1}}_{D}\psi(x)-S^{0}_{D} \psi(x)) \ ds(x), \\
&\frac{1}{\mu_{1}} [\frac{1}{2} Id+(K^{0}_{D})^{*}]\psi_{5}-\frac{1}{\mu_{0}} [-\frac{1}{2} Id+(K^{0}_{D})^{*}]\phi_{5}=0.
\end{aligned}
\label{p5}
\end{equation}
\textbf{\textit{Problem P6}}:\\
\begin{equation}
\begin{aligned}
&S^{0}_{D}\psi_{6}(x)-S^{0}_{D}\phi_{6}(x)= -\frac{1}{\vert \partial D \vert} \int_{\partial D} (S^{\kappa_{1}}_{D}\psi(x)-S^{0}_{D} \psi(x)) \ ds(x), \\
&\frac{1}{\mu_{1}} [\frac{1}{2} Id+(K^{0}_{D})^{*}]\psi_{6}-\frac{1}{\mu_{0}} [-\frac{1}{2} Id+(K^{0}_{D})^{*}]\phi_{6}=\frac{1}{\mu_{1}}[(K^{0}_{D})^{*}-(K^{\kappa_{1}}_{D})^{*}]\psi.
\end{aligned}
\label{p6}
\end{equation}
\textbf{\textit{Problem P7}}:\\
\begin{equation}
\begin{aligned}
&S^{0}_{D}\psi_{7}(x)-S^{0}_{D}\phi_{7}(x)=\mathcal{O}(\delta^{3}), \\
&\frac{1}{\mu_{1}} [\frac{1}{2} Id+(K^{0}_{D})^{*}]\psi_{7}-\frac{1}{\mu_{0}} [-\frac{1}{2} Id+(K^{0}_{D})^{*}]\phi_{7}= \mathcal{O}( \delta^{3}).
\end{aligned}
\label{p7}
\end{equation}
Then upto an error of order $\mathcal{O}(\delta^3)$,
\begin{comment}
$\left\{\begin{array}{ccc}
                                                                                     \mathcal{O}(\delta^{3-\alpha}),&\,0<\alpha<1 \mbox{ if }\mu_1\rightarrow 0,\\
                                                                                       \mathcal{O}(\delta^3),&\, \mbox{ Otherwise }
                                                                                            \end{array} \right.$
\end{comment}
we can write
\begin{equation}
(\phi,\psi)=(\phi_{1},\psi_{1})+(\phi_{2},\psi_{2})+(\phi_{3},\psi_{3})+(\phi_{4},\psi_{4})+(\phi_{5},\psi_{5})+(\phi_{6},\psi_{6})+(\phi_{7},\psi_{7}).
\notag
\end{equation}
In the following, we shall treat each of these problems separately. We begin by noting the scaling property of the inverse of the single-layer potential.
\begin{lemma}\label{mean-zero}
Let $H^{1}_{\diamondsuit}(\partial D) $ denote the subspace of $H^{1}(\partial D) $ comprising the functions with zero mean. Then
\[\Vert (S^{0}_{D})^{-1} \Vert_{\mathcal{L}(H^{1}_{\diamondsuit}(\partial D),L^{2}(\partial D))} \leq C,\] where the constant $C$ is independent of $\delta$.
\end{lemma}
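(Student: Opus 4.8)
The plan is to reduce everything to the fixed reference boundary $\partial B$ by scaling, and to extract the gain from the zero-mean hypothesis via a Poincar\'e inequality on $\partial B$. First I would record the scaling identity for the Laplace single-layer operator. Writing $x=\delta\xi+z$, $y=\delta\eta+z$ and using $|x-y|=\delta|\xi-\eta|$ together with $ds(y)=\delta^{2}\,ds(\eta)$, one finds $(S^{0}_{D}\phi)^{\wedge}(\xi)=\delta\,(S^{0}_{B}\hat\phi)(\xi)$ for $\xi\in\partial B$, in the notation $\hat\phi(\xi):=\phi(\delta\xi+z)$. Equivalently, if $\phi=(S^{0}_{D})^{-1}f$ then $\hat\phi=\delta^{-1}(S^{0}_{B})^{-1}\hat f$.

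Next I would track how the norms transform under $g\mapsto\hat g$. The $L^{2}$ norms scale cleanly, $\|g\|_{L^{2}(\partial D)}=\delta\,\|\hat g\|_{L^{2}(\partial B)}$, while the tangential gradient picks up a factor $\delta^{-1}$, so that $\|\nabla_{\partial D}g\|_{L^{2}(\partial D)}=\|\nabla_{\partial B}\hat g\|_{L^{2}(\partial B)}$ is scale-invariant. Hence
\begin{equation}
\|f\|_{H^{1}(\partial D)}^{2}=\delta^{2}\|\hat f\|_{L^{2}(\partial B)}^{2}+\|\nabla_{\partial B}\hat f\|_{L^{2}(\partial B)}^{2},
\notag
\end{equation}
so in particular $\|\nabla_{\partial B}\hat f\|_{L^{2}(\partial B)}\le\|f\|_{H^{1}(\partial D)}$, while the $L^{2}$ part of the data norm is damped by $\delta^{2}$. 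Combining the scaling of the $L^{2}(\partial D)$ norm of $\phi$ with the relation $\hat\phi=\delta^{-1}(S^{0}_{B})^{-1}\hat f$, the two powers of $\delta$ cancel and I obtain the $\delta$-free identity
\begin{equation}
\|(S^{0}_{D})^{-1}f\|_{L^{2}(\partial D)}=\delta\,\|\hat\phi\|_{L^{2}(\partial B)}=\|(S^{0}_{B})^{-1}\hat f\|_{L^{2}(\partial B)}.
\notag
\end{equation}

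It then remains to bound the right-hand side by $\|f\|_{H^{1}(\partial D)}$ with a constant depending only on $B$. Here I would invoke the boundedness of $(S^{0}_{B})^{-1}$ as a map $H^{1}(\partial B)\to L^{2}(\partial B)$ on the fixed Lipschitz boundary $\partial B$, a $\delta$-independent quantity already used in the proof of Lemma \ref{scaling}, giving $\|(S^{0}_{B})^{-1}\hat f\|_{L^{2}(\partial B)}\le C(B)\|\hat f\|_{H^{1}(\partial B)}$. Finally, since $f$ has zero mean on $\partial D$ and $\int_{\partial D}f\,ds=\delta^{2}\int_{\partial B}\hat f\,ds$, the rescaled datum $\hat f$ has zero mean on $\partial B$; the Poincar\'e inequality on $\partial B$ then yields $\|\hat f\|_{H^{1}(\partial B)}\le C(B)\|\nabla_{\partial B}\hat f\|_{L^{2}(\partial B)}\le C(B)\|f\|_{H^{1}(\partial D)}$. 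Chaining the three estimates gives $\|(S^{0}_{D})^{-1}f\|_{L^{2}(\partial D)}\le C\,\|f\|_{H^{1}(\partial D)}$ with $C=C(B)$ independent of $\delta$.

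I expect the only delicate point to be the role of the zero-mean hypothesis, and it is essential rather than cosmetic. Without it one could not replace $\|\hat f\|_{L^{2}(\partial B)}$ by the scale-invariant gradient term: for a constant datum one has $\nabla_{\partial B}\hat f=0$ while $\|f\|_{H^{1}(\partial D)}=\delta\|\hat f\|_{L^{2}(\partial B)}\to 0$, forcing a $\delta^{-1}$ blow-up in the estimate. Restricting to $H^{1}_{\diamondsuit}(\partial D)$ is exactly what lets Poincar\'e absorb the otherwise unweighted $L^{2}$ contribution to $\|\hat f\|_{H^{1}(\partial B)}$ by the $\delta$-stable gradient term. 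The remaining ingredients, namely the scaling identities and the elliptic/Poincar\'e estimates on the fixed domain $B$, are routine.
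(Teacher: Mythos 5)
Your proof is correct and follows essentially the same route as the paper's own argument: you rescale to the fixed boundary $\partial B$ via $(S^{0}_{D}\phi)^{\wedge}=\delta\,S^{0}_{B}\hat\phi$ so that the factor $\delta$ cancels against the scaling $\Vert\phi\Vert_{L^{2}(\partial D)}=\delta\Vert\hat\phi\Vert_{L^{2}(\partial B)}$, you pass the zero-mean property from $f$ to $\hat f$ and absorb the $L^{2}$ part of $\Vert\hat f\Vert_{H^{1}(\partial B)}$ by the scale-invariant tangential-gradient term via the surface Poincar\'e inequality, and you conclude with the $\delta$-independent bound on $(S^{0}_{B})^{-1}:H^{1}(\partial B)\to L^{2}(\partial B)$. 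The differences from the paper's proof are purely a matter of bookkeeping order, and your closing observation about why the zero-mean hypothesis is essential matches exactly how the paper uses it.
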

\begin{proof}
Let $f \in H^{1}_{\diamondsuit}(\partial D) $, that is, the mean $\bar{f}=0 $. Using the scaling properties (see \cite{Challa-Sini-1}) and surface Poincare inequality, we can write
\begin{equation}
\begin{aligned}
\Vert \hat{f} \Vert^{2}_{H^{1}(\partial B)} &=\int_{\partial B} \vert \hat{f}(\xi) \vert^{2} \ ds(\xi) + \int_{\partial B} \vert \partial_{T} \hat{f}(\xi) \vert^{2}\ ds(\xi) =\int_{\partial B} \vert \hat{f}(\xi)-\bar{\hat{f}}(\xi) \vert^{2} \ ds(\xi) + \int_{\partial B} \vert \partial_{T} \hat{f}(\xi) \vert^{2} \ ds(\xi) \\
&\leq C \int_{\partial B} \vert \partial_{T} \hat{f}(\xi) \vert^{2}\ ds(\xi) \leq C \int_{\partial D} \vert \partial_{T} f(x) \vert^{2}\ ds(x)= C \Vert f \Vert^{2}_{H^{1}(\partial D)},
\end{aligned}
\notag
\end{equation}
which implies \[\Vert \hat{f} \Vert_{H^{1}_{\diamondsuit}(\partial B)} \leq C \Vert f \Vert_{H^{1}_{\diamondsuit}(\partial D)}. \]
Now let $\phi \in L^{2}(\partial D)$ be such that $S^{0}_{D} \phi=f $. Then
\[\delta \ S^{0}_{B} \hat{\phi}=\hat{f} \ \text{on}\ \partial B, \] 
and we can write
\[\delta \Vert \hat{\phi} \Vert_{L^{2}(\partial B)} = \Vert (S^{0}_{B})^{-1} \hat{f} \Vert_{L^{2}(\partial B)} \leq C \Vert \hat{f} \Vert_{H^{1}_{\diamondsuit}(\partial B)}, \]
where $C$ is independent of $\delta $. Therefore
\[\Vert \phi \Vert_{L^{2}(\partial D)} = \delta \Vert \hat{\phi} \Vert_{L^{2}(\partial B)} \leq C \Vert \hat{f} \Vert_{H^{1}_{\diamondsuit}(\partial B)} \leq C \Vert f \Vert_{H^{1}_{\diamondsuit}(\partial D)}, \]
and hence $\Vert (S^{0}_{D})^{-1} \Vert_{\mathcal{L}(H^{1}_{\diamondsuit}(\partial D),L^{2}(\partial D))} \leq C $.
\end{proof}
\subsubsection{Treatment of Problem P1}
We rewrite the first identity in P1 as
\begin{equation}
[\underbrace{S^{0}_{D} \psi_{1}(x)-S^{0}_{D} \phi_{1}(x)}_{H1}]-[\underbrace{u^{I}(z)+ \nabla u^{I}(z)(x-z)}_{H2}]=0 \ \text{on} \ \partial D.
\notag
\end{equation}
Let us also note that both $H1$ and $H2$ are harmonic in $x$, and hence satisfies
\begin{equation}
\Delta_{x}(H1-H2)=0 \ \text{in} \ D.
\notag
\end{equation}
Therefore by maximum principle, it follows that $H1-H2=0 $ in $D$, that is,
\begin{equation}
S^{0}_{D} \psi_{1}(x)-S^{0}_{D} \phi_{1}(x)= u^{I}(z)+ \nabla u^{I}(z)(x-z) \ \text{in}\ D.
\notag
\end{equation}
Taking the normal derivative from the interior of the domain $D$, we then have
\begin{equation}
\partial_{\nu} S^{0}_{D} \psi_{1} \Big\vert_{-} - \partial_{\nu} S^{0}_{D} \phi_{1} \Big\vert_{-} = \sum_{i=1}^{3} \partial_{i} u^{I}(z)\ \nu_{i}.
\notag
\end{equation}
This can also be written as
\begin{equation}
[\frac{1}{2} Id+ (K^{0}_{D})^{*}] \psi_{1}(x)-[\frac{1}{2} Id+ (K^{0}_{D})^{*}] \phi_{1}(x) =\sum_{i=1}^{3} \partial_{i} u^{I}(z)\ \nu_{i}. 
\label{p1-1}
\end{equation}
Combining this with the second identity of P1, we then obtain
\begin{equation}
\begin{aligned}
\Big[-\frac{1}{2} Id - (K^{0}_{D})^{*}+ \frac{\mu_{1}}{\mu_{0}} \Big[-\frac{1}{2} Id + (K^{0}_{D})^{*} \Big] \Big] \phi_{1} &= (1-\frac{\mu_{1}}{\mu_{0}}) \sum_{i=1}^{3} \partial_{i} u^{I}(z)\ \nu_{i} \\
\Rightarrow \Big[\lambda Id+ (K^{0}_{D})^{*} \Big] \phi_{1} = - \sum^{3}_{i=1} \partial_{i} u^{I}(z)\ \nu_{i},
\end{aligned}
\notag
\end{equation}
where $\lambda=\frac{1}{2} \frac{\mu_{0}+\mu_{1}}{\mu_{0}-\mu_{1}}$. Now since $\mu_{0} \neq \mu_{1}$, the operator $[\lambda Id+ (K^{0}_{D})^{*}]$ is invertible and hence we can write
\begin{equation}
\begin{aligned}
\phi_{1}&= - \Big[\lambda Id+ (K^{0}_{D})^{*} \Big]^{-1} (\sum_{i=1}^{3} \partial_{i} u^{I}(z) \nu_{i} ) \\
\Rightarrow \phi_{1}&= -\sum_{i=1}^{3} \partial_{i} u^{I}(z) \Big[\lambda Id+ (K^{0}_{D})^{*} \Big]^{-1} \nu_{i} . 
\end{aligned}
\label{p1-2}
\end{equation}
Next we note that since $\phi_{1}$ satisfies the first identity in \eqref{p1}, we can also write
\begin{equation}
\begin{aligned}
\phi_{1}&= \partial_{\nu} S^{0}_{D} \phi_{1} \Big\vert_{-} - \partial_{\nu} S^{0}_{D} \phi_{1} \Big\vert_{+} \\
&= \partial_{\nu} S^{0}_{D} \phi_{1} \Big\vert_{-} -\Big[\frac{\mu_{0}}{\mu_{1}} \partial_{\nu} S^{0}_{D} \psi_{1} \Big\vert_{-}- \sum_{i=1}^{3} \partial_{i} u^{I}(z) \nu_{i} \Big]  \\
&= \partial_{\nu} S^{0}_{D} \phi_{1} \Big\vert_{-} -\frac{\mu_{0}}{\mu_{1}}  \partial_{\nu} S^{0}_{D} \psi_{1} \Big\vert_{-} + \sum_{i=1}^{3} \partial_{i} u^{I}(z) \nu_{i}.
\end{aligned}
\notag
\end{equation}
Now using the fact $\nu_{i}(x)=\partial_{\nu} x_{i}$ and the Green's formula, we have
\begin{equation}
\begin{aligned}
\int_{\partial D} \phi_{1}&= \int_{D} \Delta S^{0}_{D} \phi_{1}(x) dx - \frac{\mu_{0}}{\mu_{1}} \int_{D} \Delta S^{0}_{D} \psi_{1}(x) dx+ \sum_{i=1}^{3} \partial_{i} u^{I}(z) \int_{D} \Delta x_{i} dx 
=0.
\end{aligned}
\label{p1-3}
\end{equation}
Let us now estimate $\int_{\partial D} (x-z)^{j} \phi_{1}(x) ds(x)$, for $\vert j \vert=1 $ where $(x-z)^{j}=x_{j}-z_{j} $. \\
Recalling \eqref{p1-2}, we have
\begin{equation}
\begin{aligned}
\int_{\partial D} (x-z)^{j} \phi_{1}(x) ds(x) &=- \sum_{i=1}^{3} \partial_{i} u^{I}(z) \int_{\partial D} (x-z)^{j} \Big[\lambda Id+(K^{0}_{D})^{*} \Big]^{-1}(\nu_i)(x) \ ds(x) \\
&=-\sum_{i=1}^{3} \partial_{i} u^{I}(z) \delta^{3} \int_{\partial B}y^j[\lambda Id+(K_{B}^0)^*]^{-1}(\nu_x\cdot \nabla x^i)(y)\  ds(y) \\ 
&(= \mathcal{O}(\delta^{3})).
\end{aligned}
\label{p1-4}
\end{equation}
%
\begin{comment}
\begin{remark}
It would be interesting to note that the term $\int_{\partial B} y^{j} \Big[\lambda Id+(K^{0}_{B})^{*} \Big]^{-1} \partial_{\nu} y_{i} \ dS(y)$ above is the \textbf{polarizability tensor} $M_{ij}(B) $.
\end{remark}
\end{comment}
\subsubsection{Treatment of Problem P2}
As in the case of Problem $P1$, we use the harmonicity of the right hand side of the first identity in \eqref{p2} and proceed as following.\\
Let $H3(x)=S^{0}_{D} \psi_{2}(x)-S^{0}_{D} \phi_{2}(x)$. Then $H3$ satisfies
\begin{equation}
\begin{aligned}
\Delta_{x} H3(x) &= 0 \ \text{in} \ D, \\
H3(x)&=0 \ \text{on} \ \partial D. 
\end{aligned}
\notag
\end{equation}
Therefore by the maximum principle, we infer that $H3(x)=0 \ \text{in} \ D $, and hence by taking the normal derivative from the interior we obtain
\begin{equation}
\begin{aligned}
\partial_{\nu} S^{0}_{D} \psi_{2}\Big\vert_{-} (x)-\partial_{\nu} S^{0}_{D} \phi_{2}\Big\vert_{-} (x)&=0 \\
\Rightarrow \Big[\frac{1}{2} Id+ (K^{0}_{D})^{*} \Big] \psi_{2}(x) &= \Big[\frac{1}{2} Id+ (K^{0}_{D})^{*} \Big] \phi_{2}(x). 
\end{aligned}
\label{p2-1}
\end{equation}
Using this in the second identity of \eqref{p2}, we have
\begin{equation}
\begin{aligned}
&\frac{1}{\mu_{1}} \Big[\frac{1}{2} Id+ (K^{0}_{D})^{*} \Big]\phi_{2}(x) - \frac{1}{\mu_{0}} \Big[-\frac{1}{2} Id+ (K^{0}_{D})^{*} \Big]\phi_{2}(x) = \sum_{\vert j \vert=2} \frac{1}{j!} \partial_{j} u^{I}(z) \frac{\partial}{\partial \nu} (x-z)^{j} \\
&\Rightarrow \frac{\mu_{0}-\mu_{1}}{\mu_{0} \mu_{1}} \Big[\lambda \ Id+(K^{0}_{D})^{*} \Big] \phi_{2}(x) = \sum_{\vert j \vert=2} \frac{1}{j!} \partial_{j} u^{I}(z) \frac{\partial}{\partial \nu} (x-z)^{j} \\
&\Rightarrow \phi_{2}(x) = \frac{\mu_{0} \mu_{1}}{\mu_{0}-\mu_{1}} \sum_{\vert j \vert=2 } \frac{1}{j!} \partial_{j} u^{I}(z) \Big[\lambda \ Id+ (K^{0}_{D})^{*} \Big]^{-1} \frac{\partial}{\partial \nu} (x-z)^{j}. 
\end{aligned}
\label{p2-2}
\end{equation}
Next we note that we can also write
\begin{equation}
\begin{aligned}
\phi_{2}(x) &= \partial_{\nu} S^{0}_{D} \phi_{2} \Big\vert_{-} (x)- \partial_{\nu} S^{0}_{D} \phi_{2} \Big\vert_{+}(x) \\
&=\partial_{\nu} S^{0}_{D} \phi_{2} \Big\vert_{-}(x)-\frac{\mu_{0}}{\mu_{1}} \partial_{\nu} S^{0}_{D} \psi_{2}\Big\vert_{-} (x)  + \sum_{\vert j \vert=2} \frac{1}{j !} \partial_{j} u^{I}(z) \frac{\partial}{\partial \nu} (x-z)^{j}.
\end{aligned}
\label{p2-3}
\end{equation}
Then using Green's formula, we have
\begin{equation}
\begin{aligned}
\int_{\partial D} \phi_{2}(x) \ ds(x) &= \int_{D} \Delta S^{0}_{D} \phi_{2}(x) \ dx - \frac{\mu_{0}}{\mu_{1}} \int_{D} \Delta S^{0}_{D} \psi_{2}(x) \ dx + \sum_{\vert j \vert=2} \frac{1}{j !} \partial_{j} u^{I}(z) \int_{\partial D} \frac{\partial}{\partial \nu} (x-z)^{j} \ ds(x) \\
&= \sum_{\vert j \vert=2} \frac{1}{j !} \partial_{j} u^{I}(z) \int_{\partial D} \frac{\partial}{\partial \nu} (x-z)^{j} \ ds(x).
\end{aligned}
\notag
\end{equation}
Now 
\begin{equation}
\begin{aligned}
\sum_{\vert j \vert=2} \frac{1}{j !} \partial_{j} u^{I}(z) \int_{\partial D} \frac{\partial}{\partial \nu} (x-z)^{j} \ ds(x) &= \sum_{\vert j \vert=2} \frac{1}{j !} \partial_{j} u^{I}(z) \int_{ D} \Delta (x-z)^{j} \ dx \\
&=\sum_{j=1}^{3} \frac{1}{2 !} \partial^{2}_{j} u^{I}(z). 2 \vert D \vert = \Delta u^{I}(z) \delta^{3} \vert B \vert \\
&=- \kappa_{0}^{2} u^{I}(z) \vert B \vert \delta^{3},
\end{aligned}
\notag
\end{equation} 
and therefore
\begin{equation}
\int_{\partial D} \phi_{2}(x) \ ds(x) = - \kappa_{0}^{2} u^{I}(z) \vert B \vert \delta^{3}.
\label{p2-4}
\end{equation}
Next we estimate $\int_{\partial D} (x-z)^{i} \phi_{2}(x) \ ds(x)$ for $\vert j \vert=1 $. \\
Using Cauchy-Schwarz inequality, we have
\begin{equation}
\begin{aligned}
\int_{\partial D} (x-z)^{i} \phi_{2}(x) \ ds(x) &\leq \Vert (x-z)^{i} \Vert_{L^{2}(\partial D)} \Vert \phi_{2} \Vert_{L^{2}(\partial D)} \\
&\leq \Vert (x_{i}-z_{i}) \Vert_{L^{2}(\partial D)} \Vert \phi_{2} \Vert_{L^{2}(\partial D)}.
\end{aligned}
\notag
\end{equation}
Now $\Vert (x_{i}-z_{i}) \Vert_{L^{2}(\partial D)}= \mathcal{O}(\delta^{2})$ and from \eqref{p2-2}, using Minkowski's inequality, we have
\begin{equation}
\begin{aligned}
\Vert \phi_{2} \Vert_{L^{2}(\partial D)} &\leq \vert \frac{\mu_{0}\mu_{1}}{\mu_{0}-\mu_{1}} \vert \sum_{\vert j \vert=2} \frac{1}{j !} \vert \partial_{j} u^{I}(z) \vert \Vert \Big[\lambda Id+(K^{0}_{D})^{*} \Big]^{-1} \frac{\partial}{\partial \nu} (x-z)^{j}\Vert_{L^{2}(\partial D)}\\
&\leq \vert \frac{\mu_{0} \mu_{1}}{\mu_{0}-\mu_{1}} \vert \Vert \Big[\lambda Id+ (K^{0}_{D})^{*} \Big]^{-1} \Vert_{L^{2}(\partial D)} \sum_{\vert j \vert=2} \frac{1}{j !} \vert \partial_{j} u^{I}(z) \vert \Vert \frac{\partial}{\partial \nu} (x-z)^{j} \Vert_{L^{2}(\partial D)}. 
\end{aligned}
\notag
\end{equation}
Thus we need to estimate $\Vert \frac{\partial}{\partial \nu} (x-z)^{j} \Vert_{L^{2}(\partial D)} $. But
\begin{equation}
\begin{aligned}
\vert \frac{\partial}{\partial \nu} (x-z)^{j} \vert^{2} &\leq C \vert x-z \vert^{2} \\
\Rightarrow \int_{\partial D} \vert \frac{\partial}{\partial \nu} (x-z)^{j} \vert^{2} \ ds(x) &\leq C \int_{\partial D} \vert x-z \vert^{2} \ ds(x) \leq C \delta^{2} \vert \partial D \vert \leq C \delta^{4} \\
\Rightarrow \Vert \frac{\partial}{\partial \nu} (x-z)^{j} \Vert_{L^{2}(\partial D)} &\leq (C \delta^{4})^{\frac{1}{2}} =\mathcal{O}(\delta^{2}),
\end{aligned}
\notag
\end{equation}
and therefore we obtain that 
\begin{equation}
\int_{\partial D} (x-z)^{i} \phi_{2}(x) \ ds(x)= \mathcal{O}(\delta^{4}).
\label{p2-5}
\end{equation}
\subsubsection{Treatment of Problem P3}
In this case, using the second identity of \eqref{p3} we write
\begin{equation}
\begin{aligned}
\phi_{3} &=\partial_{\nu} S^{0}_{D} \phi_{3} \Big\vert_{-}-\partial_{\nu} S^{0}_{D} \phi_{3} \Big\vert_{+} \\
&=\partial_{\nu} S^{0}_{D} \phi_{3} \Big\vert_{-}- \frac{\mu_{0}}{\mu_{1}} \Big[\frac{1}{2} Id+(K^{0}_{D})^{*} \Big] \psi_{3} \\
&=\partial_{\nu} S^{0}_{D} \phi_{3} \Big\vert_{-} -\frac{\mu_{0}}{\mu_{1}} \partial_{\nu} S^{0}_{D} \psi_{3} \Big\vert_{-} , 
\end{aligned}
\notag
\end{equation}
and hence using Green's identity, we have
\begin{equation}
\int_{\partial D} \phi_{3}(x) \ ds(x) = \int_{D} \Delta S^{0}_{D} \phi_{3}(x) \ dx- \frac{\mu_{0}}{\mu_{1}} \int_{D} \Delta S^{0}_{D} \psi_{3}(x) \ dx =0.
\label{p3-1}
\end{equation}
Let us now estimate $\int_{\partial D} (x-z)^{i} \phi_{3}(x) \ ds(x)$ for $\vert i \vert=1$. Using Cauchy-Schwarz inequality, we have
\begin{equation}
\Big\vert \int_{\partial D} (x-z)^{i} \phi_{3}(x) \ ds(x) \Big\vert \leq \Vert (x-z)^{i} \Vert_{L^{2}(\partial D)} \Vert \phi_{3} \Vert_{L^{2}(\partial D)}
\notag
\end{equation}
and also we have already seen that $\Vert (x-z)^{i} \Vert_{L^{2}(\partial D)}= \mathcal{O}(\delta^{2}) $. 
%
\begin{comment}
Now using the invertibility of the solution operator from Proposition $1.1$, ({\color{red}{APC: This constant C would possibly depend upon $\delta$}}) we obtain
\begin{equation}
\begin{aligned}
\Vert \phi_{3} \Vert_{L^{2}(\partial D)} &\leq C \Vert \sum_{\vert j \vert=2} \frac{1}{j !} \partial_{j} u^{I}(z) (x-z)^{j} \Vert_{H^{1}(\partial D)} \\
&= \mathcal{O}(\delta^{2}) ({\color{red}{APC: \delta ??? \ See\ below\ for\ a\ different\ argument}}), 
\end{aligned}
\notag
\end{equation}
and hence 
\begin{equation}
\Big\vert \int_{\partial D} (x-z)^{i} \phi_{3}(x) \ dS(x) \Big\vert = \mathcal{O}(\delta^{4}) ({\color{red}{APC: \delta^3 ??? \ See\ below\ for\ a\ different\ argument}}).
\notag
\end{equation}
\end{comment}
%
Let us denote $f(x) := \sum_{\vert j \vert=2} \frac{1}{j !} \partial_{j} u^{I}(z) (x-z)^{j} $ and $c:=\frac{1}{\vert \partial D \vert} \int_{\partial D} f(x) \ ds(x)$, that is, $c$ is the average of $f$ over the boundary $\partial D$ of $D$.Then  
\begin{equation}
\frac{1}{\vert \partial D \vert} \int_{\partial D} f = \frac{\delta^{2}}{\vert \partial D \vert} (\int_{\partial B} \hat{f}) = C_{1} (\int_{\partial B} \hat{f}), 
\notag
\end{equation}
where $C_{1}$ is independent of $\delta$.\\
Now 
\begin{equation}
\begin{aligned}
\hat{f}(\xi) &= \sum_{\vert j \vert=2} \frac{1}{j !} \partial_{j} u^{I}(z) (\delta \xi)^{j}= \delta^{2} \sum_{\vert j \vert=2} \frac{1}{j !} \partial_{j} u^{I}(z) \xi^{j} \\
\Rightarrow \int_{\partial B} \hat{f} &= \delta^{2} \Big[ \int_{\partial B} \sum_{\vert j \vert=2} \frac{1}{j !} \partial_{j} u^{I}(z) \xi^{j} \Big], 
\end{aligned}
\notag
\end{equation}
whereby we obtain that $c= C_{2} \delta^{2}$, where the constant $C_{2}$ is now independent of $\delta$.\\
Now using the invertibility of  $(S^{0}_{D})^{-1}$, we can rewrite the first identity in \eqref{p3} as
\begin{equation}
\psi_{3}= \phi_{3}+ (S^{0}_{D})^{-1} (f-c).
\notag
\end{equation}
Using this in the second identity of \eqref{p3}, we obtain
\begin{equation}
\begin{aligned}
&\Big[-\frac{1}{2} Id+(K^{0}_{D})^{*} \Big] \phi_{3}= \frac{\mu_{0}}{\mu_{1}} \Big[\frac{1}{2} Id+(K^{0}_{D})^{*} \Big] (S^{0}_{D})^{-1} (f-c) + \frac{\mu_{0}}{\mu_{1}} \Big[\frac{1}{2} Id+ (K^{0}_{D})^{*} \Big] \phi_{3} \\
&\Rightarrow \Big[\Big[-\frac{1}{2} Id+(K^{0}_{D})^{*} \Big]-\frac{\mu_{0}}{\mu_{1}} \Big[\frac{1}{2} Id+(K^{0}_{D})^{*} \Big] \Big] \phi_{3} = \frac{\mu_{0}}{\mu_{1}} \Big[\frac{1}{2} Id+(K^{0}_{D})^{*} \Big] (S^{0}_{D})^{-1} (f-c) \\
&\Rightarrow \phi_{3} = \frac{\mu_{0}}{\mu_{1}-\mu_{0}} \Big[-\frac{\mu_{0}+\mu_{1}}{2(\mu_{1}-\mu_{0})} Id+ (K^{0}_{D})^{*} \Big]^{-1} \Big[\frac{1}{2} Id+ (K^{0}_{D})^{*} \Big] (S^{0}_{D})^{-1} (f-c),
\end{aligned} 
\notag
\end{equation}
whereby we obtain
\begin{equation}
\begin{aligned}
\Vert \phi_{3} \Vert_{L^{2}(\partial D)} \leq C \Big\vert\frac{\mu_0}{\mu_1-\mu_0}\Big\vert \Big\Vert \Big[-\frac{\mu_{0}+\mu_{1}}{2(\mu_{1}-\mu_{0})} Id+ (K^{0}_{D})^{*} \Big]^{-1} \Big\Vert_{\mathcal{L}(L^{2}(\partial D),L^{2}(\partial D))} &\Big\Vert \Big[ \frac{1}{2} Id+(K^{0}_{D})^{*} \Big] \Big\Vert_{\mathcal{L}(L^{2}(\partial D),L^{2}(\partial D))} \\
\Big\Vert (S^{0}_{D})^{-1} \Big\Vert_{\mathcal{L}(H^{1}(\partial D),L^{2}(\partial D))} &\Vert f-c \Vert_{H^{1}(\partial D)} \\
\leq C \frac{\mu_0}{\mu_1} \Vert f-c \Vert_{H^{1}(\partial D)} \leq C \frac{\mu_0}{\mu_1}(\Vert f \Vert_{H^{1}(\partial D)}+ \Vert c \Vert_{H^{1}(\partial D)}) &\leq C \frac{\mu_0}{\mu_1} (\delta^{2}+ \delta^{3}),
\end{aligned}
\notag
\end{equation}
since $\Vert c \Vert_{H^{1}(\partial D)}=\Vert c \Vert_{L^{2}(\partial D)}=\mathcal{O}(\delta^{3}),$ and $(S^{0}_{D})^{-1} $ is uniformly bounded with respect to $\delta $ when acting over the mean-zero function $f-c $ (see Lemma \ref{mean-zero}). 
 Therefore $\Vert \phi_{3} \Vert_{L^{2}(\partial D)}=\mathcal{O}(\delta^{2})$ and hence 
\begin{equation}
\Big\vert \int_{\partial D} (x-z)^{i} \phi_{3}(x)\ ds(x) \Big\vert = \mathcal{O}(\frac{\mu_0}{\mu_1}\delta^{4}).
\label{p3-2}
\end{equation}
\subsubsection{Treatment of Problem P4}
In this case, we note that the right hand side of \eqref{p4} is a constant, and hence independent of $x$. Therefore we can again use the harmonicity of both sides of \eqref{p4} together with maximum principle to obtain the identity
\begin{equation}
S^{0}_{D}\psi_{4}(x)-S^{0}_{D} \phi_{4}(x)= c \ \text{in}\ D, 
\notag
\end{equation}
where $c=\frac{1}{\vert \partial D \vert} \int_{\partial D} (\sum_{\vert j\vert=2} \frac{1}{j !} \partial_{j} u^{I}(z) (x-z)^{j})\ ds(x) $.\\
Taking the normal derivative from the interior of the domain, we then obtain
\begin{equation}
\partial_{\nu} S^{0}_{D} \psi_{4} \Big\vert_{-} -\partial_{\nu} S^{0}_{D} \phi_{4} \Big\vert_{-}=0,
\notag
\end{equation}
which implies
\begin{equation}
\Big[\frac{1}{2} Id+ (K^{0}_{D})^{*} \Big] \psi_{4} = \Big[\frac{1}{2} Id+ (K^{0}_{D})^{*} \Big] \phi_{4}.
\notag
\end{equation}
Using the second identity in \eqref{p4}, we then obtain
\begin{equation}
\begin{aligned}
\Big[\frac{\mu_{1}}{\mu_{0}} \Big[-\frac{1}{2} Id+(K^{0}_{D})^{*} \Big]-\Big[\frac{1}{2} Id+ (K^{0}_{D})^{*} \Big] \Big]\phi_{4}&=0 \\
\Rightarrow \Big[ (-\frac{\mu_{1}}{2 \mu_{0}}-\frac{1}{2}) Id+ (\frac{\mu_{1}}{\mu_{0}}-1) (K^{0}_{D})^{*} \Big] \phi_{4} &=0 \\
\Rightarrow \Big[\lambda Id+ (K^{0}_{D})^{*} \Big] \phi_{4}&=0 \\
\Rightarrow \phi_{4} =0,
\end{aligned}
\notag
\end{equation}
since $\lambda Id+ (K^{0}_{D})^{*} $ is invertible where $\lambda=\frac{1}{2}. \frac{\mu_{0}+\mu_{1}}{\mu_{0}-\mu_{1}} $.\\
This then implies
\begin{equation}
\int_{\partial D} \phi_{4}(x) \ ds(x)=0, \ \int_{\partial D} (x-z)^{i} \phi_{4}(x)\ ds(x)=0.
\label{p4-1}
\end{equation} 
\subsubsection{Treatment of Problem P5}
In this case, using the second identity in \eqref{p5}, we can write
\begin{equation}
\begin{aligned}
\phi_{5} &= \partial_{\nu} S^{0}_{D} \phi_{5} \Big\vert_{-} - \partial_{\nu} S^{0}_{D} \phi_{5} \Big\vert_{+} \\
&=\partial_{\nu} S^{0}_{D} \phi_{5} \Big\vert_{-} -\frac{\mu_{0}}{\mu_{1}} \Big[\frac{1}{2} Id+(K^{0}_{D})^{*} \Big] \psi_{5} \\
&=\partial_{\nu} S^{0}_{D} \phi_{5} \Big\vert_{-} -\frac{\mu_{0}}{\mu_{1}} \partial_{\nu} S^{0}_{D} \psi_{5} \Big\vert_{-}.  
\end{aligned}
\notag
\end{equation}
Using Green's formula, we can then write
\begin{equation}
\begin{aligned}
\int_{\partial D} \phi_{5}(x)\ ds(x) &= \int_{D} \Delta S^{0}_{D} \phi_{5} (x) \ dx- \frac{\mu_{0}}{\mu_{1}} \int_{D} \Delta S^{0}_{D} \psi_{5}(x) \ dx \\
&=0. 
\end{aligned}
\label{p5-1}
\end{equation}
Let us write $\tilde{f}(x):=-(S^{\kappa_{1}}_{D} \psi(x)-S^{0}_{D} \psi(x))$ and $\tilde{c}:=-\frac{1}{\vert \partial D \vert} \int_{\partial D} (S^{\kappa_{1}}_{D} \psi(x)-S^{0}_{D} \psi(x))\ ds(x) $.\\
Then proceeding as in the case of Problem $P3$, we can write
\begin{equation}
\phi_{5}=\frac{\mu_{0}}{\mu_{1}-\mu_{0}} \Big[\lambda Id+(K^{0}_{D})^{*} \Big]^{-1} \Big[\frac{1}{2} Id+(K^{0}_{D})^{*} \Big] (S^{0}_{D})^{-1}[\tilde{f}-\tilde{c}].
\notag
\end{equation}
Then
\begin{equation}
\Vert \phi_{5} \Vert_{L^{2}(\partial D)} \leq C \frac{\mu_0}{\mu_1}(\Vert S^{\kappa_{1}}_{D} \psi-S^{0}_{D} \psi \Vert_{H^{1}(\partial D)} + \Vert \tilde{c} \Vert_{H^{1}(\partial D)}) \leq C \frac{\mu_0}{\mu_1}(\delta^{2-\alpha}+ \Vert \tilde{c} \Vert_{H^{1}(\partial D)}).
\notag
\end{equation}
Now 
\begin{equation}
\tilde{c}=\frac{1}{\vert \partial D \vert} \int_{\partial D} (S^{\kappa_{1}}_{D} \psi- S^{0}_{D} \psi)= C \int_{\partial B} ({S^{\kappa_{1}}_{D} \psi-S^{0}_{D} \psi})^{\wedge} = C \delta \int_{\partial B} (S^{\kappa_{1},\delta}_{B} \hat{\psi}- S^{0,\delta}_{B} \hat{\psi}),
\label{p5-2}
\end{equation}
where $S^{\kappa_{1},\delta}_{B} \hat{\psi}(\xi)=\int_{\partial B} \frac{e^{i \kappa_{1} \delta \vert \xi-\eta \vert}}{4 \pi \vert \xi-\eta \vert} \hat{\psi}(\eta)\ ds(\eta) $.\\
Using Cauchy-Schwarz inequality, we can estimate
\begin{equation}
\vert \int_{\partial B} (S^{\kappa_{1},\delta}_{B} \hat{\psi}(\xi)-S^{0,\delta}_{B} \hat{\psi}(\xi) )\ ds(\xi) \vert \leq \Vert S^{\kappa_{1},\delta}_{B} \hat{\psi}(\xi)-S^{0,\delta}_{B} \hat{\psi}(\xi) \Vert_{L^{2}(\partial B)} \Vert 1 \Vert_{L^{2}(\partial B)} 
\label{p5-3}
\end{equation}
Next we note that
\begin{equation}
\begin{aligned}
\vert S^{\kappa_{1},\delta}_{B} \hat{\psi}(\xi)- S^{0,\delta}_{B}\hat{\psi}(\xi) \vert &\leq \int_{\partial B} \Big\vert \frac{e^{i \kappa_{1} \delta \vert \xi-\eta \vert}-1}{4 \pi \vert \xi-\eta \vert} \Big\vert \vert \hat{\psi}(\eta) \vert \ ds(\eta) \\
&\leq \Big\Vert \frac{e^{i \kappa_{1} \delta \vert \xi-. \vert}-1}{4 \pi \vert \xi-. \vert} \Big\Vert_{L^{2}(\partial B)} \Vert \hat{\psi} \Vert_{L^{2}(\partial B)} \\
&\leq \delta^{-1} \Vert \psi \Vert_{L^{2}(\partial D)} \Big\Vert \frac{e^{i \kappa_{1} \delta \vert \xi-. \vert}-1}{4 \pi \vert \xi-. \vert} \Big\Vert_{L^{2}(\partial B)} \\
&\leq C \delta^{-1} \Big\Vert \frac{e^{i \kappa_{1} \delta \vert \xi-. \vert}-1}{4 \pi \vert \xi-. \vert} \Big\Vert_{L^{2}(\partial B)}%\textcolor{red}{\left(1+\frac{\mu_0}{\mu_1}\delta\right)}
\end{aligned}
\notag
\end{equation}
which implies
\begin{equation}
\begin{aligned}
\Rightarrow \Vert S^{\kappa_{1},\delta}_{B} \hat{\psi}-S^{0,\delta}_{B} \hat{\psi} \Vert_{L^{2}(\partial B)} &= (\int_{\partial B} \vert S^{\kappa_{1},\delta}_{B} \hat{\psi}(\xi)-S^{0,\delta}_{B}\hat{\psi}(\xi) \vert^{2} \ ds(\xi))^{\frac{1}{2}} \\
&\leq C \delta^{-1} (\int_{\partial B} \Big\Vert \frac{e^{i \kappa_{1} \delta \vert \xi-. \vert}-1}{4 \pi \vert \xi-. \vert} \Big\Vert^{2}_{L^{2}(\partial B)}\ ds(\xi))^{\frac{1}{2}}.
\end{aligned}
\notag
\end{equation}
But
\begin{equation}
\frac{e^{i \kappa_{1} \delta \vert \xi-\eta \vert}-1}{4 \pi \vert \xi-\eta \vert} = \mathcal{O}(\kappa_{1} \delta)=\mathcal{O}(\delta^{1-\frac{\alpha}{2}}),
\notag
\end{equation}
and therefore
\begin{equation}
\Vert S^{\kappa_{1},\delta}_{B}\hat{\psi}-S^{0,\delta}_{B} \hat{\psi} \Vert_{L^{2}(\partial B)} \leq C \delta^{-1} \delta^{1-\frac{\alpha}{2}} =C \delta^{-\frac{\alpha}{2}}.
\notag
\end{equation}
Using this in \eqref{p5-3}, we get
\begin{equation}
\vert \int_{\partial B} (S^{\kappa_{1},\delta}_{B} \hat{\psi}(\xi)-S^{0,\delta}_{B}\hat{\psi}(\xi))\ ds(\xi) \vert \leq C \delta^{-\frac{\alpha}{2}},
\label{p5-4}
\end{equation}
where $C$ is independent of $\delta$.
We then have
\begin{equation}
\Vert \tilde{c} \Vert_{H^{1}(\partial D)}=\Vert \tilde{c} \Vert_{L^{2}(\partial D)}=C \delta^{1-\frac{\alpha}{2}} \vert \partial D \vert^{\frac{1}{2}}= C \delta^{2-\frac{\alpha}{2}},
\notag
\end{equation}
and hence $\Vert \phi_{5} \Vert_{L^{2}(\partial D)} \leq C \frac{\mu_0}{\mu_1} \delta^{2-\alpha}$.\\
Therefore, using Cauchy-Schwarz inequality, it follows that 
\begin{equation}
\int_{\partial D} (x-z)^{j} \phi_{5}= \mathcal{O}(\frac{\mu_0}{\mu_1} \delta^{4-\alpha}) .
\label{p5-5}
\end{equation}
\subsubsection{Treatment of Problem P6}
In this case, using the second identity in \eqref{p6}, we can write
\begin{equation}
\begin{aligned}
\phi_{6} &= \partial_{\nu} S^{0}_{D} \phi_{6} \Big\vert_{-} - \partial_{\nu} S^{0}_{D} \phi_{6} \Big\vert_{+} \\
&=\partial_{\nu} S^{0}_{D} \phi_{6} \Big\vert_{-} -\frac{\mu_{0}}{\mu_{1}} \Big[\frac{1}{2} Id+(K^{0}_{D})^{*} \Big] \psi_{6} +\frac{\mu_{0}}{\mu_{1}} \Big[(K^{0}_{D})^{*}-(K^{\kappa_{1}}_{D})^{*} \Big] \psi  \\
&=\partial_{\nu} S^{0}_{D} \phi_{6} \Big\vert_{-} -\frac{\mu_{0}}{\mu_{1}} \partial_{\nu} S^{0}_{D} \psi_{6} \Big\vert_{-}+ \frac{\mu_{0}}{\mu_{1}} \partial_{\nu} S^{0}_{D} \psi \Big\vert_{-} -\frac{\mu_{0}}{\mu_{1}} \Big[\frac{1}{2} Id+ (K^{\kappa_{1}}_{D})^{*} \Big] \psi.  
\end{aligned}
\notag
\end{equation}
Using Green's formula, we can then write
\begin{equation}
\begin{aligned}
\int_{\partial D} \phi_{6}(x)\ ds(x) &= \int_{D} \Delta S^{0}_{D} \phi_{6} (x) \ dx- \frac{\mu_{0}}{\mu_{1}} \int_{D} \Delta S^{0}_{D} \psi_{6}(x) \ dx +\frac{\mu_{0}}{\mu_{1}} \int_{D} \Delta S^{0}_{D} \psi(x) \ dx \\
&\qquad -\frac{\mu_{0}}{\mu_{1}} \int_{\partial D} \Big[\frac{1}{2} Id+ (K^{\kappa_{1}}_{D})^{*} \Big] \psi(x)\ ds(x)  \\
&=-\frac{\mu_{0}}{\mu_{1}} \int_{\partial D} \Big[\frac{1}{2} Id+ (K^{\kappa_{1}}_{D})^{*} \Big] \psi(x) \ ds(x) \\
&=-\frac{\mu_{0}}{\mu_{1}} \int_{\partial D} \partial_{\nu} S^{\kappa_{1}}_{D} \psi(x)\ ds(x) 
=-\frac{\mu_{0}}{\mu_{1}} \int_{D} \Delta S^{\kappa_{1}}_{D} \psi(x)\ dx 
=\frac{\mu_{0}}{\mu_{1}} \kappa_{1}^{2} \int_{D} S^{\kappa_{1}}_{D} \psi(x) \ dx. 
\end{aligned}
\label{p6-1}
\end{equation}
\begin{lemma}\label{int-psi}
$\int_{D} S^{\kappa_{1}}_{D} \psi(x)\ dx = u^{I}(z) \vert D \vert +
                                                                                                                   \mathcal{O}(\delta^{4}), \ \mbox{ if }\,\mu_1\sim \delta^{-\beta},\ \beta \geq 0,$ and $\kappa_{1}^{2} \sim \delta^{-\alpha},\ 0 \leq \alpha <1 $.
                                                                                                                  
\end{lemma}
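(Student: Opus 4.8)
The plan is to exploit that $w:=S^{\kappa_{1}}_{D}\psi$ is, by construction, a solution of the Helmholtz equation $(\Delta+\kappa_{1}^{2})w=0$ inside $D$, so that its mean value over $D$ is controlled by its Dirichlet trace on $\partial D$. From the first identity of \eqref{sing-obs-1} we have $w|_{\partial D}=u^{I}+S^{\kappa_{0}}_{D}\phi$, and using \eqref{sing-obs-3} I would split $w=W_{\kappa_{1}}+\tilde{W}$, where $W_{\kappa_{1}}=S^{\kappa_{1}}_{D}((S^{\kappa_{1}}_{D})^{-1}u^{I})$ carries the trace $u^{I}$ (the object already analysed in Lemma \ref{H2propWk}) and $\tilde{W}:=S^{\kappa_{1}}_{D}((S^{\kappa_{1}}_{D})^{-1}S^{\kappa_{0}}_{D}\phi)$ carries the trace $S^{\kappa_{0}}_{D}\phi$. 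Accordingly $\int_{D}S^{\kappa_{1}}_{D}\psi=\int_{D}W_{\kappa_{1}}+\int_{D}\tilde{W}$, and I would estimate the two pieces separately.

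For the main piece, write $\int_{D}W_{\kappa_{1}}=\int_{D}u^{I}+\int_{D}(W_{\kappa_{1}}-u^{I})$. A Taylor expansion of $u^{I}$ about $z$ together with the scaling $x=z+\delta\xi$ gives $\int_{D}u^{I}=u^{I}(z)|D|+\nabla u^{I}(z)\cdot\int_{D}(x-z)\,dx+\mathcal{O}(\delta^{5})=u^{I}(z)\delta^{3}|B|+\mathcal{O}(\delta^{4})$, since $\int_{D}(x-z)\,dx=\mathcal{O}(\delta^{4})$. For the remainder $H=W_{\kappa_{1}}-u^{I}$, I rescale to $B$ and invoke the interior bound \eqref{H2propWk-B} (equivalently \eqref{H2propWk-f-eq-Elliptic-B}), namely $\|\hat{H}\|_{H^{2}(B)}=\mathcal{O}(\delta^{2-\alpha})$. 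Because $\int_{D}H=\delta^{3}\int_{B}\hat{H}$ and $|\int_{B}\hat{H}|\le|B|^{1/2}\|\hat{H}\|_{L^{2}(B)}=\mathcal{O}(\delta^{2-\alpha})$, one obtains $\int_{D}(W_{\kappa_{1}}-u^{I})=\mathcal{O}(\delta^{5-\alpha})$, which is $\mathcal{O}(\delta^{4})$ for $\alpha<1$. Hence $\int_{D}W_{\kappa_{1}}=u^{I}(z)|D|+\mathcal{O}(\delta^{4})$.

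For the correction $\tilde{W}$, the key point is that its rescaled Dirichlet trace $\hat{g}:=(S^{\kappa_{0}}_{D}\phi)^{\wedge}$ on $\partial B$ is small. Using the scaling $(S^{0}_{D}\phi)^{\wedge}=\delta\,S^{0}_{B}\hat{\phi}$ (the $\kappa_{0}$-correction $S^{d_{\kappa_{0}}}_{D}$ being of higher order in $\delta$), the mapping property of $S^{0}_{B}$, and Lemma \ref{phi-estimate} (which gives $\|\hat{\phi}\|_{L^{2}(\partial B)}=\delta^{-1}\|\phi\|_{L^{2}(\partial D)}=\mathcal{O}(1)$), one gets $\|\hat{g}\|_{H^{1}(\partial B)}=\mathcal{O}(\delta)$. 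Rescaling, $\hat{\tilde{W}}$ solves $(\Delta+\kappa_{1}^{2}\delta^{2})\hat{\tilde{W}}=0$ in $B$ with Dirichlet data $\hat{g}$. Since $\kappa_{1}^{2}\delta^{2}=\delta^{2-\alpha}\to 0$ stays away from the Dirichlet eigenvalues of $-\Delta$ on $B$, elliptic well-posedness yields $\|\hat{\tilde{W}}\|_{L^{2}(B)}\le C\|\hat{g}\|_{H^{1/2}(\partial B)}=\mathcal{O}(\delta)$, whence $\int_{D}\tilde{W}=\delta^{3}\int_{B}\hat{\tilde{W}}=\mathcal{O}(\delta^{4})$. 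Adding the two contributions gives the claimed expansion.

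The step I expect to be most delicate is the bound $\int_{D}(W_{\kappa_{1}}-u^{I})=\mathcal{O}(\delta^{4})$. A crude attempt through the Helmholtz identity $\int_{D}W_{\kappa_{1}}=-\kappa_{1}^{-2}\int_{\partial D}\partial_{\nu}W_{\kappa_{1}}|_{-}$ splits into two terms that are individually of size $\mathcal{O}(\delta^{3})$ and nearly cancel, so it does not reveal the true order; the genuine gain to $\mathcal{O}(\delta^{5-\alpha})$ comes only from the sharp rescaled $H^{2}(B)$ estimate of Lemma \ref{H2propWk}, and it is precisely here that the hypothesis $\alpha<1$ is used. A secondary technical point is ensuring that the Dirichlet problem for $\hat{\tilde{W}}$ on $B$ is uniformly solvable as $\delta\to 0$, which is guaranteed by $\kappa_{1}^{2}\delta^{2}\to 0$ together with the fact that $0$ is not a Dirichlet eigenvalue of $-\Delta$ on $B$.
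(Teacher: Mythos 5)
Your proof is correct, but it takes a genuinely different route than the paper. The paper works with the single combined remainder $G := S^{\kappa_{1}}_{D}\psi - S^{\kappa_{0}}_{D}\phi - u^{I}$, which by the first identity of \eqref{sing-obs-1} has zero Dirichlet trace and solves $(\Delta+\kappa_{0}^{2})G = -(\kappa_{0}^{2}-\kappa_{1}^{2})S^{\kappa_{1}}_{D}\psi$ in $D$ (see \eqref{p6-2}); a rescaled energy/Poincar\'e estimate, fed by the a priori bound $\Vert\psi\Vert_{L^{2}(\partial D)}=\mathcal{O}(1)$, yields $\int_{D}G=\mathcal{O}(\delta^{5-\alpha})$, after which $\int_{D}S^{\kappa_{0}}_{D}\phi$ is bounded by $\mathcal{O}(\delta^{4})$ through the elementary kernel estimate $\int_{D}\frac{dx}{4\pi\vert x-t\vert}=\mathcal{O}(\delta^{2})$ combined with Lemma \ref{phi-estimate} (see \eqref{p6-9}), and $\int_{D}u^{I}$ is Taylor-expanded as in \eqref{p6-8}. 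You instead split $S^{\kappa_{1}}_{D}\psi = W_{\kappa_{1}}+\tilde{W}$ at the level of the density via \eqref{sing-obs-3}, recycle the already-proven $H^{2}(B)$ bound of Lemma \ref{H2propWk} for the main piece (correctly identifying this as the step where $\alpha<1$ enters), and control the correction $\tilde{W}$ through the $\mathcal{O}(\delta)$ smallness of its rescaled trace together with uniform well-posedness of the Dirichlet problem on $B$ --- legitimate since $\kappa_{1}^{2}\delta^{2}=\mathcal{O}(\delta^{2-\alpha})\to 0$ stays below the first Dirichlet eigenvalue of $B$. The trade-off: your route bypasses the a priori estimate on $\psi$ entirely (only Lemma \ref{phi-estimate} and Lemma \ref{H2propWk} are invoked) and replaces the paper's bespoke energy estimate for $G$ by standard elliptic theory, but it explicitly relies on the invertibility of $S^{\kappa_{1}}_{D}$ to perform the splitting, whereas the paper's argument uses only the boundary identity itself and a more elementary direct kernel bound in place of your trace-smallness argument for $\tilde{W}$. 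Both roads produce the same intermediate rate, $\mathcal{O}(\delta^{5-\alpha})+\mathcal{O}(\delta^{4})=\mathcal{O}(\delta^{4})$ for $0\leq\alpha<1$, and your cautionary remark about the naive identity $\int_{D}W_{\kappa_{1}}=-\kappa_{1}^{-2}\int_{\partial D}\partial_{\nu}W_{\kappa_{1}}\vert_{-}$ losing the cancellation is well taken.
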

\begin{proof}
From the first identity in \eqref{sing-obs-1}, we have
\begin{equation}
\begin{aligned}
(\Delta+\kappa_{0}^{2}) (\underbrace{S^{\kappa_{1}}_{D} \psi- S^{\kappa_{0}}_{D} \phi - u^{I}}_{G}) &= -(\kappa_{0}^{2}-\kappa_{1}^{2}) S^{\kappa_{1}}_{D} \psi \ \text{in}\ D, \\
G&=0 \ \text{on} \ \partial D. 
\end{aligned}
\label{p6-2}
\end{equation}
For $\xi \in B$, let us denote by $\hat{G}(\xi):= G(\delta \xi+z)$. Then $\hat{G}$ satisfies
\begin{equation}
\begin{aligned}
(\Delta+ \kappa_{0}^{2} \delta^{2}) \hat{G} &= -\delta^{2} (\kappa_{0}^{2}-\kappa_{1}^{2}) (S^{\kappa_{1}}_{D} \psi)^{\wedge} =  -(\kappa_{0}^{2}-\kappa_{1}^{2}) \delta^{3} S^{\kappa_{1},\delta}_{B} \hat{\psi} \ \text{in} \ B, \\
\hat{G} &= 0 \ \text{on} \ \partial B,
\end{aligned}
\label{p6-3}
\end{equation}
where $S^{\kappa_{1},\delta}_{B} \hat{\psi}(\xi):= \int_{\partial B} \frac{e^{i \kappa_{1} \delta \vert \xi-\eta \vert}}{4 \pi \vert \xi-\eta \vert} \hat{\psi}(\eta) \ ds(\eta) $.
Multiplying both sides of \eqref{p6-3} by $\hat G$ and integrating by parts, we obtain
\begin{equation}
\int_{B} \vert \nabla \hat{G} \vert^{2} \ d\xi- \delta^{2} \kappa_{0}^{2} \int_{B} \vert \hat{G} \vert^{2} \ d\xi= -(\kappa_{1}^{2}-\kappa_{0}^{2}) \delta^{3} \int_{B} S^{\delta,\kappa_{1}}_{B} \hat{\psi} \hat{G} \ d\xi.
\label{p6-4}
\end{equation}
Using Poincare's inequality, we get
\begin{equation}
\begin{aligned}
C(B) \int_{B} \vert \hat{G} \vert^{2} \ d\xi- \delta^{2} \kappa_{0}^{2} \int_{B} \vert \hat{G} \vert^{2}\ d\xi & \leq \int_{B} \vert \nabla \hat{G} \vert^{2} \ d\xi-\delta^{2} \kappa_{0}^{2} \int_{B} \vert \hat{G} \vert^{2} \ d\xi \\
&= -(\kappa_{1}^{2}-\kappa_{0}^{2}) \delta^{3} \int_{B} S^{\kappa_{1},\delta}_{B} \hat{\psi} \hat{G} \ d\xi.
\end{aligned}
\notag
\end{equation}
Using Holder's inequality on the right hand side, we then have
\begin{equation}
(C(B)-\delta^{2} \kappa_{0}^{2}) \Vert \hat{G} \Vert^{2}_{L^{2}(B)} \leq \vert \kappa_{1}^{2} - \kappa_{0}^{2} \vert \delta^{3} \Vert S^{\kappa_{1},\delta}_{B} \hat{\psi} \Vert_{L^{2}(B)} \Vert \hat{G} \Vert_{L^{2}(B)}.
\notag
\end{equation}
For $\delta>0$ small enough, we have $(C(B)-\delta^{2} \kappa_{0}^{2}) > 0$, and hence
\begin{equation}
\Vert \hat{G} \Vert_{L^{2}(B)} \leq \frac{\vert \kappa_{1}^{2}-\kappa_{0}^{2} \vert}{C(B)-\delta^{2} \kappa_{0}^{2}} \delta^{3} \Vert S^{\kappa_{1},\delta}_{B} \hat{\psi} \Vert_{L^{2}(B)}.
\notag
\end{equation}
For $\xi \in B$,
\begin{equation}
\begin{aligned}
\vert S^{\kappa_{1},\delta}_{B} \hat{\psi}(\xi) \vert &\leq \int_{\partial B} \Big\vert \frac{e^{i \kappa_{1} \delta \vert \xi-\eta \vert}}{4 \pi \vert \xi-\eta \vert} \Big\vert \vert \hat{\psi}(\eta) \vert \ ds(\eta) =\int_{\partial B} \frac{1}{4 \pi \vert \xi-\eta \vert }  \vert \hat{\psi}(\eta) \vert \ ds(\eta) \\
&\leq \Big\Vert \frac{1}{4\pi \vert \xi-. \vert} \Big\Vert_{L^{2}(\partial B)} \Vert \hat{\psi} \Vert_{L^{2}(\partial B)} = \delta^{-1} \Vert \psi \Vert_{L^{2}(\partial D)} \Big\Vert \frac{1}{4\pi \vert \xi-. \vert} \Big\Vert_{L^{2}(\partial B)} \\
&\leq C \delta^{-1} \Big\Vert \frac{1}{4\pi \vert \xi-. \vert} \Big\Vert_{L^{2}(\partial B)} 
\end{aligned}
\notag
\end{equation}
which implies 
\begin{equation}
\Vert S^{\kappa_{1},\delta}_{B} \hat{\psi} \Vert_{L^{2}(B)}=(\int_{B} \vert S^{\kappa_{1},\delta}_{B} \hat{\psi}(\xi) \vert^{2}\ d\xi)^{\frac{1}{2}} \leq C \delta^{-1} \Big( \int_{B} \Big\Vert \frac{1}{4 \pi \vert \xi-. \vert} \Big\Vert_{L^{2}(\partial B)}^{2} \ d\xi \Big)^{\frac{1}{2}} \leq C \delta^{-1}.
\notag
\end{equation}
Hence
\begin{equation}
\Vert \hat{G} \Vert_{L^{2}(B)} \leq C \delta^{2-\alpha}.
\label{p6-5}
\end{equation}
Therefore, we have
\begin{equation}
\Vert G \Vert_{L^{2}(D)} = \delta^{\frac{3}{2}} \Vert \hat{G} \Vert_{L^{2}(B)}= \mathcal{O}(\delta^{\frac{7}{2}-\alpha}).
\label{p6-6}
\end{equation}
By using Holder's inequality, we can write 
\begin{equation}
\int_{D} G \leq \Vert G \Vert_{L^{2}(D)} (\int_{D} 1)^{\frac{1}{2}} = \Vert G \Vert_{L^{2}(D)} \delta^{\frac{3}{2}} = \mathcal{O}(\delta^{5-\alpha}). 
\notag
\end{equation}
 But this implies
 \begin{equation}
 \int_{D} S^{\kappa_{1}}_{D} \psi= \int_{D} S^{\kappa_{0}}_{D} \phi+ \int_{D} u^{I} + \mathcal{O}(\delta^{5-\alpha}).
  \label{p6-7}
 \end{equation}
We next estimate $\int_{D} S^{\kappa_{0}}_{D} \phi(x)\ dx $ and $\int_{D} u^{I}(x)\ dx$.\\
 Using a Taylor series expansion around the centre $z$, we have the estimate
 \begin{equation}
 \int_{D} u^{I}(x) \ dx = u^{I}(z) \vert D \vert+ \mathcal{O}(\delta^{4}).
 \label{p6-8}
 \end{equation}
To estimate  $\int_{D} S^{\kappa_{0}}_{D} \phi(x)\ dx $ we observe that
\begin{align}\label{p6-9}
\vert \int_{D} S^{\kappa_{0}}_{D} \phi(x)\ dx \vert &\leq \int_{D} \int_{\partial D} \frac{\vert \phi(t) \vert}{4 \pi \vert x-t  \vert } \ ds(t) \ dx = \int_{\partial D} \vert \phi(t) \vert (\int_{D} \frac{1}{4 \pi \vert x-t \vert} \ dx) \ ds(t) \nonumber\\
&= \mathcal{O}(\delta^{2}) \int_{\partial D} \vert \phi(t) \vert \ ds(t) \nonumber\\
&\leq \mathcal{O}(\delta^{2}) \vert \partial D \vert^{\frac{1}{2}} \Vert \phi \Vert_{L^{2}(\partial D)}  =                                                                                                                    \mathcal{O}(\delta^{4}).
\end{align}
Using \eqref{p6-8} and \eqref{p6-9} in \eqref{p6-7}, the required result follows.
\end{proof}
Therefore from \eqref{p6-1}, we have
\begin{align}\label{p6-10}
\int_{\partial D} \phi_{6}(x)\ ds(x) = \frac{\mu_{0}}{\mu_{1}} \kappa_{1}^{2} u^{I}(z) \vert D \vert+ \mathcal{O}(\frac{\mu_0}{\mu_1} \kappa_{1}^{2}\delta^{4})=\frac{\mu_{0}}{\mu_{1}} \kappa_{1}^{2} u^{I}(z) \delta^{3} \vert B \vert+\mathcal{O}(\frac{\mu_0}{\mu_1} \kappa_{1}^{2} \delta^{4}).
\end{align}      
From the first identity of \eqref{p6}, we have (using harmonicity and maximum principle and the fact that the left-hand side is a constant)
\begin{equation}
\begin{aligned}
\partial_{\nu} S^{0}_{D} \psi_{6} \Big\vert_{-} - \partial_{\nu} S^{0}_{D} \phi_{6} \Big\vert_{-} &=0 \\
\Rightarrow \Big[\frac{1}{2} Id+ (K^{0}_{D})^{*} \Big] \psi_{6} -\Big[\frac{1}{2} Id+ (K^{0}_{D})^{*} \Big] \phi_{6}&=0.
\end{aligned}
\notag
\end{equation}
Using this, from the second identity, we obtain
\begin{equation}
\begin{aligned}
 &\frac{\mu_{1}}{\mu_{0}} \Big[-\frac{1}{2} Id + (K^{0}_{D})^{*} \Big] \phi_{6}- \Big[\frac{1}{2} Id+ (K^{0}_{D})^{*} \Big] \phi_{6}+ \Big[(K^{0}_{D})^{*}- (K^{\kappa_{1}}_{D})^{*} \Big] \psi(x) = 0\\
&\Rightarrow \frac{\mu_{1}-\mu_{0}}{\mu_{0}} \Big[\lambda Id+ (K^{0}_{D})^{*} \Big] \phi_{6} = \Big[(K^{\kappa_{1}}_{D})^{*}-(K^{0}_{D})^{*} \Big] \psi \\
&\Rightarrow \phi_{6}=\frac{\mu_{0}}{\mu_{1}-\mu_{0}} \Big[\lambda Id+ (K^{0}_{D})^{*} \Big]^{-1} \Big[(K^{\kappa_{1}}_{D})^{*}-(K^{0}_{D})^{*} \Big] \psi.
\end{aligned}
\notag
\end{equation}
This then implies that $\Vert \phi_{6} \Vert_{L^{2}(\partial D)}= \mathcal{O}(\frac{\mu_0}{\mu_1} \delta^{2-\alpha})$ and hence using Cauchy-schwarz inequality, we have
\begin{equation}
\vert \int_{\partial D} (x-z)^{i} \phi_{6} \vert = \mathcal{O}(\frac{\mu_0}{\mu_1} \delta^{4-\alpha}).
\label{p6-11}
\end{equation}
\subsubsection{Treatment of Problem P7} 
Integrating the second identity of \eqref{p7} over $\partial D $, using the fact that $\int_{\partial D} (K^{0}_{D})^{*} \psi_{7}(x)=-\frac{1}{2} \int_{\partial D} \psi_{7}(x) $, we can conclude that
\begin{equation}
\begin{aligned}
&\int_{\partial D} \phi_{7}(x) = \mu_{0} \int_{\partial D} \mathcal{O}(\delta^{3}) \leq C \mu_{0} \delta^{3} \vert \partial D \vert^{\frac{1}{2}} \\
\Rightarrow &\int_{\partial D} \phi_{7}(x) = \mathcal{O}( \delta^{4}).
\end{aligned}
\label{p7-1}
\end{equation}
For functions $ F \in H^{1}(\partial D)$ and $ G \in L^{2}(\partial D)$ in the right-hand side of \eqref{p7}, we can estimate the $L^{2}$-norm of $\phi_{7}$ as follows. From the second identity in \eqref{p7}, we can write 
\begin{equation}
\begin{aligned}
\phi_{7}&= \frac{\mu_{0}}{\mu_{1}} \Big[-\frac{1}{2} Id+(K^{0}_{D})^{*} \Big]^{-1} \Big[\frac{1}{2} Id + (K^{0}_{D})^{*} \Big] \psi_{7} - \mu_{0} \Big[-\frac{1}{2} Id+ (K^{0}_{D})^{*} \Big]^{-1} G.
%&=\frac{\mu_{0}}{\mu_{1}} \psi_{7}- \mu_{0} G.
\end{aligned}
\notag
\end{equation}
Again, from the first identity it follows that
\begin{equation}
\psi_{7}= (S^{0}_{D})^{-1} F + \phi_{7}.
\notag
\end{equation}
Combining the above two, we can write
\begin{equation}
\phi_{7}=\Big[Id-\tilde{A} \Big]^{-1} \tilde{A} (S^{0}_{D})^{-1} F-\mu_{0} \Big[Id-\tilde{A} \Big]^{-1} \Big[-\frac{1}{2} Id+(K^{0}_{D})^{*} \Big]^{-1} G,
\notag
\end{equation}
where $\tilde{A}=\frac{\mu_{0}}{\mu_{1}} \Big[-\frac{1}{2} Id+ (K^{0}_{D})^{*} \Big]^{-1} \Big[\frac{1}{2} Id+(K^{0}_{D})^{*} \Big] $. \\
Now we can use the facts that $\Big[-\frac{1}{2} Id+(K^{0}_{D})^{*} \Big]^{-1}, \Big[\frac{1}{2} Id +(K^{0}_{D})^{*} \Big], \Big[Id-\tilde{A} \Big]^{-1} $ are uniformly bounded with respect to $\delta$  and $(S^{0}_{D})^{-1} $ scales as $\frac{1}{\delta}$ to prove the estimate
\begin{equation}
\Vert \phi_{7} \Vert_{L^{2}(\partial D)} \leq \frac{\mu_{0}}{\mu_{1}} \cdot C \cdot \frac{1}{\delta} \Vert F \Vert_{H^{1}(\partial D)} + \mu_{0} C \Vert G \Vert_{L^{2}(\partial D)}. 
\notag
\end{equation}
Using this in our setup, we obtain
\begin{equation}
\begin{aligned}
\Vert \phi_{7} \Vert_{L^{2}(\partial D)} &\leq C \frac{\mu_{0}}{\mu_{1}} \cdot \frac{1}{\delta} \cdot  \delta^{3} + C \mu_{0} \delta^{3} \leq C \frac{\mu_{0}}{\mu_{1}} \delta^{2} + C \mu_{0} \delta^{3} ,
\end{aligned}
\notag
\end{equation}
and hence
\begin{equation}
\begin{aligned}
\int_{\partial D} (x_{i}-z_{i}) \ \phi_{7}(x) &\leq \Vert x_{i}-z_{i} \Vert_{L^{2}(\partial D)} \Vert \phi_{7} \Vert_{L^{2}(\partial D)}  \leq C \frac{\mu_{0}}{\mu_{1}} \delta^{4}+ C \mu_{0} \delta^{5} \\
\Rightarrow \int_{\partial D} (x_{i}-z_{i}) \ \phi_{7}(x) &= \mathcal{O}(\frac{\mu_{0}}{\mu_{1}} \delta^{4})+\mathcal{O}(\delta^5) .
\end{aligned}
\label{p7-2}
\end{equation} 
\subsubsection{Asymptotics for the near-field and far-field pattern}
In this section, we sum up the results obtained in the previous sections to find the asymptotics for the far field pattern.\\
From the analysis in the previous sections, we have already seen that for $\alpha \in (0,1) $,
\begin{equation}
\begin{aligned}
 &\int_{\partial D} \phi_{1}(x) \ ds(x)=0, \ \int_{\partial D} (x-z)^{j} \phi_{1}(x)\ ds(x)=- \sum_{i=1}^{3} \partial_{i} u^{I}(z) \delta^{3} \int_{\partial B}y^j[\lambda Id+(K_{B}^0)^*]^{-1}(\nu_x\cdot \nabla x^i)(y)\  ds(y), \\
 &\int_{\partial D} \phi_{2}(x)\ ds(x)= - \kappa^{2}_{0} u^{I}(z) \vert B \vert \delta^{3}, \int_{\partial D} (x-z)^{j} \phi_{2}(x) \ ds(x)= \mathcal{O}(\delta^{4}), \\
 &\int_{\partial D} \phi_{3}(x)\ ds(x)=0, \ \int_{\partial D} (x-z)^{j} \phi_{3}(x)\ ds(x)=\mathcal{O}(\frac{\mu_0}{\mu_1}\delta^{4}), \\
 &\int_{\partial D} \phi_{4}(x)\ ds(x)=0,\ \int_{\partial D} (x-z)^{j} \phi_{4}(x)\ ds(x)=0, 
 \end{aligned}
 \label{expansions}
 \end{equation}
 \begin{equation}
 \begin{aligned}
 &\int_{\partial D} \phi_{5}(x)\ ds(x)=0,\ \int_{\partial D} (x-z)^{j} \phi_{5}(x)\ ds(x)=\mathcal{O}(\frac{\mu_0}{\mu_1}\delta^{4-\alpha}) ,\\
 &\int_{\partial D} \phi_{6}(x)\ ds(x)= \frac{\mu_{0}}{\mu_{1}} \kappa_{1}^{2} u^{I}(z) \delta^{3} \vert B \vert+\mathcal{O}(\frac{\mu_0}{\mu_1}\kappa_{1}^{2}\delta^{4}), \ \int_{\partial D} (x-z)^{j} \phi_{6}(x)\ ds(x)=\mathcal{O}(\frac{\mu_0}{\mu_1}\delta^{4-\alpha}), \\ 
&\int_{\partial D} \phi_{7}(x)\ ds(x)=\mathcal{O}(\delta^{4}),\ \int_{\partial D} (x-z)^{j} \phi_{7}(x)\ ds(x)=\mathcal{O}(\frac{\mu_0}{\mu_1}\delta^{4})+\mathcal{O}(\delta^5) ,\ \text{where}\ \vert j \vert=1.
\end{aligned}
\notag
\end{equation}
We recall that the total field is of the form
\begin{equation}
u(x)=\begin{cases}
      u^{I}+ S^{\kappa_{0}}_{D} \phi(x), \ x \in \mathbb{R}^{3}\diagdown  D,   \\
      S^{\kappa_{1}}_{D} \psi(x) , \ x \in D.           \end{cases}
\notag
\end{equation}
Thus the scattered field is given by
\begin{equation}
u^{s}(x)=S^{\kappa_{0}}_{D} \phi(x)= \int_{\partial D} \Phi^{\kappa_{0}}(x,t) \phi(t) \ ds(t).
\notag
\end{equation}
\subsubsection{The near-fields}
To estimate the near fields, we first derive the following estimate 
\begin{equation}\label{Near-fields-formulas-1}
 u^s(x)=\Phi^{\kappa_0}(x, z)\int_{\partial D}\varphi(t)ds(t)+\nabla \Phi^{\kappa_0}(x, z) \cdot \int_{\partial D}(t-z)\varphi(t)ds(t) +\mathcal{O}(d^{-3}(x,z)\vert \int_{\partial D}(t-z)^2\varphi(t)ds(t)\vert) 
\end{equation}
where $d:=d(x, z)$. Then by \eqref{expansions}, we deduce that
\begin{equation}\label{Near-fields-formulas-2}
\begin{aligned}
u^s(x)&=\Phi^{\kappa_0}(x, z)[\omega^2 \mu_0 (\epsilon_1-\epsilon_0)u^I(z)\delta^3 \vert B \vert+ {\mathcal{O}(\delta^{4})}+ \mathcal{O}(\frac{\mu_0}{\mu_1} \kappa_{1}^{2} \delta^4)]\\
&-\nabla \Phi^{\kappa_0}(x, z)\cdot \Big[\sum^{3}_{i=1} \partial_{i} u^{I}(z) \delta^{3} \int_{\partial B}y^j[\lambda Id+(K_{B}^0)^*]^{-1}(\nu_x\cdot \nabla x^i)(y)\  ds(y) +\mathcal{O}(\frac{\mu_{0}}{\mu_{1}} \delta^{4-\alpha})+\mathcal{O}(\delta^4) \Big]+\mathcal{O}(d^{-3}(x,z)\delta^4) \\
&=\Phi^{\kappa_0}(x, z)[\omega^2 \mu_0 (\epsilon_1-\epsilon_0)u^I(z)\delta^3 \vert B \vert+ {\mathcal{O}(\delta^{4})}+ \mathcal{O}(\delta^{4-\alpha+\beta})]\\
&-\nabla \Phi^{\kappa_0}(x, z)\cdot  \left[\sum^{3}_{i=1} \partial_{i} u^{I}(z) \delta^{3} \int_{\partial B}y^j[\lambda Id+(K_{B}^0)^*]^{-1}(\nu_x\cdot \nabla x^i)(y)\  ds(y) +\mathcal{O}(\delta^{4-\alpha+\beta})+\mathcal{O}(\delta^4) \right] +\mathcal{O}(d^{-3}(x,z) \delta^4).
\end{aligned}
\end{equation}
\subsubsection{The far-fields}
The far field pattern $u^{\infty}(\hat{x})$ can be written as
\begin{equation}\label{Farfields-formulas}
\begin{aligned}
u^{\infty}(\hat{x}) &= \int_{\partial D} e^{- i \kappa_{0} \hat{x}.t} \phi(t)\ ds(t)\\ 
&= e^{- i \kappa_{0} \hat{x}.z} \int_{\partial D} \phi(t) \ ds(t)+ \nabla_{z} e^{-i \kappa_{0} \hat{x}.z} \int_{\partial D} (t-z) \phi(t)\ ds(t)+ \mathcal{O}\Big(\int_{\partial D} \vert t-z \vert^{2} \vert \phi(t) \vert \ ds(t) \Big).
\end{aligned}
\notag
\end{equation}
Therefore, 
\begin{comment}
\begin{equation}
\begin{aligned}
u^{\infty}(\hat{x})&=
e^{- i \kappa_{0} \hat{x}.z} \int_{\partial D} \phi(t) \ ds(t)+ \nabla_{z} e^{-i \kappa_{0} \hat{x}.z} \int_{\partial D} (t-z) \phi(t)\ ds(t)+ \mathcal{O}(\frac{\mu_0}{\mu_1}\delta^{4}) \\
&= e^{-i \kappa_{0} \hat{x}.z} \Big[\frac{\mu_{0}}{\mu_{1}} \kappa_{1}^{2} u^{I}(z) \delta^{3} \vert B \vert- \kappa_{0}^{2} u^{I}(z) \delta^{3} \vert B \vert+\mathcal{O}(\left(\frac{\mu_0}{\mu_1}\right)^2\delta^{4})+\mathcal{O}(\frac{\mu_{0}}{\mu_{1}} \delta^{4}) \Big] \\
&\quad -\nabla_{z} e^{- i \kappa_{0} \hat{x}.z} \Big(\sum^{3}_{i=1} \partial_{i} u^{I}(z) \delta^{3} \int_{\partial B} y^{j} [\lambda Id+ (K^{0}_{B})^{*} ]^{-1} \partial_{\nu} y_{i} +\mathcal{O}(\frac{\mu_0}{\mu_1}\delta^{4})+\mathcal{O}(\frac{\mu_{0}^{2}}{\mu_{1}^{2}} \delta^{4}) +\mathcal{O}(\delta^{4}) \Big)\\
&\qquad + \mathcal{O}(\frac{\mu_0}{\mu_1}\delta^{4})\\
 &= e^{-i \kappa_{0} \hat{x}.z} \Big[ \omega^{2} \mu_{0} (\epsilon_{1}-\epsilon_{0}) u^{I}(z) \delta^{3} \vert B \vert  \Big] -\nabla_{z} e^{- i \kappa_{0} \hat{x}.z} \Big(\sum^{3}_{i=1} \partial_{i} u^{I}(z) \delta^{3} \int_{\partial B} y^{j} [\lambda Id+ (K^{0}_{B})^{*} ]^{-1} \partial_{\nu} y_{i} \Big)\\
 &\qquad +\mathcal{O}(\frac{\mu_0^2}{\mu_1^2} \delta^{4}),
\end{aligned}
\notag
\end{equation}
\end{comment}
%
\begin{equation}
\begin{aligned}
 u^{\infty}(\hat{x})&=e^{- i \kappa_{0} \hat{x}.z} \int_{\partial D} \phi(t) \ ds(t)+ \nabla_{z} e^{-i \kappa_{0} \hat{x}.z} \int_{\partial D} (t-z) \phi(t)\ ds(t)+ \mathcal{O}(\delta^{4}) \\
&= e^{-i \kappa_{0} \hat{x}.z} \Big[\frac{\mu_{0}}{\mu_{1}} \kappa_{1}^{2} u^{I}(z) \delta^{3} \vert B \vert- \kappa_{0}^{2} u^{I}(z) \delta^{3} \vert B \vert+\mathcal{O}(\frac{\mu_0}{\mu_1}k_{1}^{2}\delta^{4}) +\mathcal{O}(\delta^4)\Big] \\
&\quad -\nabla_{z} e^{- i \kappa_{0} \hat{x}.z} \Big(\sum^{3}_{i=1} \partial_{i} u^{I}(z) \delta^{3} \int_{\partial B}y^j[\lambda Id+(K_{B}^0)^*]^{-1}(\nu_x\cdot \nabla x^i)(y)\  ds(y) +\mathcal{O}(\frac{\mu_0}{\mu_1}\delta^{4-\alpha}) +\mathcal{O}(\delta^{4}) \Big)\\
&\qquad + \mathcal{O}(\delta^{4})\\
 &= e^{-i \kappa_{0} \hat{x}.z} \Big[ \omega^{2} \mu_{0} (\epsilon_{1}-\epsilon_{0}) u^{I}(z) \delta^{3} \vert B \vert+\mathcal{O}(\delta^{4-\alpha+\beta})+\mathcal{O}(\delta^4)   \Big] \\
&\quad -\nabla_{z} e^{- i \kappa_{0} \hat{x}.z} \Big[ \Big(\sum^{3}_{i=1} \partial_{i} u^{I}(z) \delta^{3}  \int_{\partial B}y^j[\lambda Id+(K_{B}^0)^*]^{-1}(\nu_x\cdot \nabla x^i)(y)\  ds(y) \Big)+\mathcal{O}(\delta^{4-\alpha+\beta})+\mathcal{O}(\delta^4) \Big] +\mathcal{O}(\delta^{4}).
 \end{aligned}
\notag
\end{equation}

\subsection{Justifying the Expansions for variable $\epsilon_0$}\label{sec-variable epsilon}
Let $\epsilon_0$ be variable with $\epsilon(x)=\epsilon_0$ for $x\in\mathbb{R}^3\backslash\bar{\Omega}$, where $\Omega$ is a bounded and smooth domain. We also assume that $\mu_0$ to be constant in $\mathbb{R}^3$. Hence, $\kappa_0(x)=\omega\sqrt{\epsilon(x)\mu_0}$ varies in $\Omega$ and $\kappa_0(x)=\kappa_0=constant$ for $x\in\mathbb{R}^3\backslash\bar{\Omega}$. 
\subsubsection{Comparison of estimates to the homogeneous background case }
We shall now prove the estimates on $\Vert S^{\kappa_{0}}_{D} -S^{0}_{D}  \Vert_{\mathcal{L}(L^{2}(\partial D),H^{1}(\partial D))} $ and $\Vert (K^{\kappa_{0}}_{D})^{*}- (K^{0}_{D})^{*}  \Vert_{\mathcal{L}(L^{2}(\partial D),L^{2}(\partial D))} $.\\
In contrast to the case of homogeneous background, we are able to derive slightly weaker estimates in this case.
\begin{lemma}
$\Vert S^{\kappa_{0}}_{D} -S^{0}_{D}  \Vert_{\mathcal{L}(L^{2}(\partial D),H^{1}(\partial D))} = \mathcal{O}(\delta^{2} \vert log \ \delta \vert)$, $\Vert (K^{\kappa_{0}}_{D})^{*}- (K^{0}_{D})^{*}  \Vert_{\mathcal{L}(L^{2}(\partial D),L^{2}(\partial D))}= \mathcal{O}(\delta^{2} \vert log \ \delta \vert) $. 
\end{lemma}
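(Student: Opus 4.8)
The plan is to reduce each operator difference to its integral kernel and to exploit the near-diagonal parametrix structure of the variable-coefficient Green's function $G^{\kappa_0}$, comparing it with the Laplace fundamental solution $\Phi^0(x,y)=\frac{1}{4\pi\vert x-y\vert}$. Writing $R(x,y):=G^{\kappa_0}(x,y)-\Phi^0(x,y)$, the kernel of $S^{\kappa_0}_D-S^0_D$ is $R(x,y)$ and the kernel of $(K^{\kappa_0}_D)^*-(K^0_D)^*$ is $\partial_{\nu_x}R(x,y)=\nu_x\cdot\nabla_x R(x,y)$. The whole statement then rests on two pointwise kernel estimates, valid for $x,y\in\partial D$ (which, since $D=\delta B+z$ with $z\in\Omega$, lies deep inside $\Omega$ for $\delta$ small): first $\vert R(x,y)\vert\le C$, the leading Laplace singularities cancelling so that $R$ stays bounded up to the diagonal; and second $\vert\nabla_x R(x,y)\vert\le C(1+\vert\log\vert x-y\vert\vert)$. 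The logarithm in the second bound, which is \emph{absent} in the constant-background case where $\Phi^{\kappa_0}-\Phi^0=(e^{i\kappa_0\vert x-y\vert}-1)/(4\pi\vert x-y\vert)$ has a bounded gradient, is exactly what upgrades the constant-case rate $\delta^2$ (from \cite{Challa-Sini-1, Challa-Sini-2}) to the present $\delta^2\vert\log\delta\vert$.

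To derive these kernel bounds I would start from the Lippmann–Schwinger identity $R(x,y)=\int G^{\kappa_0}(x,w)\,\kappa_0^2(w)\,\Phi^0(w,y)\,dw$ (equivalently, compare $G^{\kappa_0}$ with the frozen fundamental solution $\Phi^{\kappa_0(y)}$ as in \cite{S-1}), and use the standard bounds $\vert G^{\kappa_0}(x,w)\vert\lesssim\vert x-w\vert^{-1}$, $\vert\nabla_x G^{\kappa_0}(x,w)\vert\lesssim\vert x-w\vert^{-2}$ together with $\Phi^0(w,y)\sim\vert w-y\vert^{-1}$. Splitting the $w$-integral into the near zone $\vert w-x\vert<2\vert x-y\vert$ and the far zone, a rescaling shows the near zone contributes $\mathcal{O}(1)$ to both $R$ and $\nabla_x R$. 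In the far zone, $\int_{\vert x-y\vert<\vert w-x\vert<\rho_0}\vert x-w\vert^{-2}\,dw\sim 1$ keeps $R$ bounded, while the corresponding gradient estimate forces $\int_{\vert x-y\vert<\vert x-w\vert<\rho_0}\vert x-w\vert^{-3}\,dw\sim\vert\log\vert x-y\vert\vert$, which is precisely where the logarithm is produced; here the mere Lipschitz regularity of $\epsilon_0$ (hence of $\kappa_0^2$) is what prevents a cleaner cancellation and leaves the logarithmic loss. Carrying this far-zone estimate out carefully under only Lipschitz coefficients, and extracting the logarithmic dependence with the correct constant, is the main obstacle.

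With the kernel bounds in hand, the operator estimates follow from the rescaling $x=z+\delta\xi$, $y=z+\delta\eta$ with $\xi,\eta\in\partial B$, under which $\vert x-y\vert=\delta\vert\xi-\eta\vert$, $ds(y)=\delta^2\,ds(\eta)$ and $\vert\log\vert x-y\vert\vert=\vert\log\delta\vert+\vert\log\vert\xi-\eta\vert\vert$. For $S^{\kappa_0}_D-S^0_D$ in the $\mathcal{L}(L^2(\partial D),H^1(\partial D))$ norm, the $L^2(\partial D)$ part is controlled by the bounded kernel $R$ and scales like $\mathcal{O}(\delta^2)$, while the tangential-derivative part has kernel $\partial_{T,x}R=\mathcal{O}(\vert\log\vert x-y\vert\vert)$: after rescaling, the constant piece $\vert\log\delta\vert$ factors out of the integral over the fixed surface $\partial B$ and yields the dominant $\mathcal{O}(\delta^2\vert\log\delta\vert)$ term, the residual $\log\vert\xi-\eta\vert$ kernel being a bounded operator on $L^2(\partial B)$ that contributes only $\mathcal{O}(\delta^2)$. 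For $(K^{\kappa_0}_D)^*-(K^0_D)^*$ in $\mathcal{L}(L^2(\partial D),L^2(\partial D))$, the kernel $\partial_{\nu_x}R=\mathcal{O}(\vert\log\vert x-y\vert\vert)$ is handled by the identical scaling argument, giving $\Vert(K^{\kappa_0}_D)^*-(K^0_D)^*\Vert=\mathcal{O}(\delta^2\vert\log\delta\vert)$. Combining the two bounds proves the lemma.
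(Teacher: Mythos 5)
Your proposal reaches the right bounds and shares the paper's skeleton: the same kernel decomposition $R=G^{\kappa_0}-\Phi^0$, the same two pointwise estimates ($R$ bounded, $\nabla_x R=\mathcal{O}(1+\vert\log\vert x-y\vert\vert)$), the same convolution integral $\int\vert x-w\vert^{-2}\vert w-y\vert^{-1}\,dw\sim\vert\log\vert x-y\vert\vert$ as the source of the logarithm, and the same final rescaling to $\partial B$ with the splitting $\log\vert x-y\vert=\log\delta+\log\vert\xi-\eta\vert$ (the paper runs this last step as a Hilbert--Schmidt bound, you as a Schur-type bound; both give $\delta^{2}\vert\log\delta\vert$). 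Where you diverge is the middle step. The paper never invokes a priori pointwise bounds on $G^{\kappa_0}$: it observes that $R(\cdot,z)$ solves $(\Delta+\kappa_0^2(x))R=-\kappa_0^2\Phi^0(\cdot,z)$ in a large ball $B\Supset\Omega$, gets boundedness of $R$ from interior $H^2$ regularity plus Sobolev embedding, and for the gradient differentiates the PDE and splits into a subproblem with source $-\partial_j[\kappa_0^2](R+\Phi^0)$ (handled again by regularity, using only that $\epsilon_0$ is Lipschitz) and one with source $-\kappa_0^2\partial_j\Phi^0$, represented through the Dirichlet Green's function $\Phi_B$ of the ball; the log then comes out of exactly your far-zone integral. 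Your route, via a volume-potential representation and Riesz-composition estimates, is more direct but buys that directness by assuming $\vert G^{\kappa_0}\vert\lesssim\vert x-w\vert^{-1}$ and $\vert\nabla_x G^{\kappa_0}\vert\lesssim\vert x-w\vert^{-2}$; these are standard parametrix facts for Lipschitz $\kappa_0^2$, but they need a citation or proof, whereas the paper's argument is self-contained modulo interior elliptic regularity.

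One genuine flaw to repair: the identity $R(x,y)=\int G^{\kappa_0}(x,w)\,\kappa_0^2(w)\,\Phi^0(w,y)\,dw$, taken over all of $\mathbb{R}^3$, is not absolutely convergent, because $\kappa_0^2$ does not decay (it equals the constant background value outside $\Omega$), so the integrand only decays like $\vert w\vert^{-2}$. You must first peel off the constant-background part: write $R=(\Phi^{\kappa}-\Phi^0)+(G^{\kappa_0}-\Phi^{\kappa})$ with $\kappa$ the exterior constant; the first term is $(e^{i\kappa\vert x-y\vert}-1)/(4\pi\vert x-y\vert)$, bounded with bounded gradient near the diagonal, and the second satisfies the genuine Lippmann--Schwinger identity $G^{\kappa_0}(x,y)-\Phi^{\kappa}(x,y)=\int_{\Omega}\Phi^{\kappa}(x,w)\,(\kappa_0^2(w)-\kappa^2)\,G^{\kappa_0}(w,y)\,dw$, whose contrast is compactly supported in $\Omega$, so your near/far splitting then applies verbatim and yields exactly the claimed bounds. (Alternatively, restrict to a large ball with a harmless boundary correction, which is in effect what the paper does with $\Phi_B$.) With that correction, and a reference for the Green's-function bounds, your argument is sound; note in passing that the expansion $\nabla G^{\kappa_0}(x,z)=\frac{x-z}{4\pi d^3(x,z)}+\mathcal{O}(1)$ that the paper quotes from \cite{S-1} in the imaging section would, if applicable here, even remove the logarithm, so the $\delta^2\vert\log\delta\vert$ rate is what one gets from the self-contained Lipschitz-coefficient argument rather than a sharp obstruction.
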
 
\begin{proof}
Let us define $G(x,z):= G^{\kappa_{0}}(x,z)-\Phi(x,z), $ where $\Phi(x,z):= \frac{1}{4 \pi \vert x-z \vert}$. We know that $\Phi(\cdot,z) \in L^{p}_{\text{loc}}(\mathbb{R}^{3}) $ for any $p<3 $. Also $(\Delta+ \kappa^{2}_{0}(x)) G^{\kappa_{0}}(x,z)= - \delta(z) $.\\
Let $B $ be a large ball such that $\Omega \Subset B $ and we consider the equation
\begin{equation}
(\Delta+\kappa_{0}^{2}(x))G(\cdot,z)=-\kappa_{0}^{2}(x) \Phi(\cdot,z) \ \text{in}\ B.
\label{e500}
\end{equation}
Since $G\Big\vert_{\partial B}(\cdot,z) $ is uniformly bounded for $z \in \Omega $, by interior regularity we can conclude that $G(\cdot,z) \in H^{2}(\Omega) $ and $\Vert G(\cdot,z) \Vert_{H^{2}(\Omega)} $ is uniformly bounded for $z \in \Omega $. By Sobolev embedding, it follows that $\Vert G(\cdot,z) \Vert_{C(\Omega)} $ is bounded uniformly for $z \in \Omega $, whence we can conclude that
\begin{equation}
\Vert S^{\kappa_{0}}_{D} \phi - S^{0}_{D} \phi \Vert_{L^{2}(\partial D)} \leq C \delta^{2} \Vert \phi \Vert_{L^{2}(\partial D)}.
\notag
\end{equation}
To estimate the $H^{1}$ norm, we similarly consider the equation
\begin{equation}
(\Delta+ \kappa_{0}^{2}(x)) \partial_{j} G(x,z)= -\partial_{j}[\kappa^{2}_{0}(x)] (G(x,z)+\Phi(x,z))- \kappa_{0}^{2}(x) \partial_{j} \Phi(x,z) \ \text{in} \ B. 
\label{e501}
\end{equation}
We split this problem into the following two sub-problems
\begin{equation}
\begin{aligned}
(\Delta+ \kappa_{0}^{2}(x)) H(x,z)&= -\partial_{j}[\kappa^{2}_{0}(x)] (G(x,z)+\Phi(x,z)) \ \text{in} \ B,\\
H(x,z)&= \partial_{j}G(x,z)\ \text{on}\ \partial B,
\end{aligned}
\label{e502-0}
\end{equation}
and 
\begin{equation}
\begin{aligned}
(\Delta+ \kappa_{0}^{2}(x)) F(x,z) &=  -\kappa_{0}^{2}(x) \partial_{j} \Phi(x,z) \ \text{in} \ B,\\
F &= 0 \ \text{on} \ \partial B.
\end{aligned}
\label{e502}
\end{equation}
The problem \eqref{e502-0} can be dealt with just as in the previous case recalling that $k_0$ is Lipschitz continuous. To study the problem
\eqref{e502}, we proceed as follows.\\
Let $\Phi_{B} $ denote the Green's function corresponding to \eqref{e502}. Then
\begin{equation}
\begin{aligned}
&F(x,z)= -\int_{B} \kappa_{0}^{2}(y) \partial_{j} \Phi(y,z) \Phi_{B}(y,x) \ dy \\
\Rightarrow \vert F(x,z) \vert &\leq C \int_{B} \frac{1}{\vert y-z \vert^{2}} \cdot \frac{1}{\vert x-y \vert} \ dy\leq C\  log(\frac{1}{\vert x-z \vert})+C.
\end{aligned}
\notag
\end{equation} 
Using Minkowski's inequality and Cauchy-Schwarz inequality, we can write
\begin{equation}
\begin{aligned}
\Vert \int_{\partial D} F(x,z) \phi(z) \ ds(z) \Vert_{L^{2}(\partial D)} &= \Big(\int_{\partial D} \Big\vert \int_{\partial D} F(x,z) \ \phi(z) \ ds(z) \Big\vert^{2}\ ds(x) \Big)^{\frac{1}{2}}\\
& \leq \int_{\partial D} \Big(\int_{\partial D} \vert F(x,z) \phi(z) \vert^{2} \ ds(x) \Big)^{\frac{1}{2}} ds(z)\\
&\leq \int_{\partial D} \Big(\int_{\partial D} \vert F(x,z) \vert^{2} ds(x) \Big)^{\frac{1}{2}} \vert \phi(z) \vert \ ds(z)\\
& \leq \Big(\int_{\partial D} \int_{\partial D} \vert F(x,z) \vert^{2} \ ds(x) ds(z) \Big)^{\frac{1}{2}} \Vert \phi \Vert_{L^{2}(\partial D)} .
\end{aligned}
\notag
\end{equation}
Since $\vert F(x,z) \vert \leq C log(\frac{1}{\vert x-z \vert}) + C $, the dominant order term can be estimated by the behaviour of the term $log (\frac{1}{\vert x-z \vert}) $. Now
\begin{equation}
\begin{aligned}
\int_{\partial D} \int_{\partial D} \Big\vert log \ \frac{1}{\vert x-z \vert} \Big\vert^{2} \ ds(x) ds(z) &=\int_{\partial D} \int_{\partial D} \Big\vert log \ \vert x-z \vert \Big\vert^{2} \ ds(x) ds(z)\\
&= \int_{\partial B} \int_{\partial B} \Big\vert log \ \delta \vert \hat{x}-\hat{z} \vert \Big\vert^{2} \delta^{2} \delta^{2} \ ds(\hat{x}) ds(\hat{z}) \\
&= \delta^{4} \int_{\partial B} \int_{\partial B} \Big\vert log\ \delta + log \vert \hat{x}-\hat{z} \vert \Big\vert^{2} \ ds(\hat{x}) ds(\hat{z}) = \mathcal{O}(\delta^{4} (log \ \delta)^{2} ),
\end{aligned}
\notag
\end{equation}
and therefore $\Vert \int_{\partial D} F(x,z) \phi(z) \ ds(z) \Vert_{L^{2}(\partial D)} \leq C \Vert \phi \Vert_{L^{2}(\partial  D)} \delta^{2} \vert log \ \delta \vert $.\\
Summing up the discussion above, we can therefore conclude that $\Vert S^{\kappa_{0}}_{D}-S^{0}_{D} \Vert_{\mathcal{L}(L^{2}(\partial D),H^{1}(\partial D))} = \mathcal{O}(\delta^{2} \vert log \delta \vert)  $. 
The proof for the other case follows similarly.
\end{proof}
\subsubsection{Deriving the far-field expansions:}
To estimate $\int_{\partial D}\phi$ and $\int_{\partial D}(y-z)\phi(y) ds(y)$, we  split the problem 
%$\left\{
%\begin{array}{ccc}
\begin{equation}
\begin{aligned}
&S^{\kappa_{1}}_{D} \psi- S^{\kappa_{0}}_{D} \phi = U^{t},\\
 &\frac{1}{\mu_{1}} \Big[\frac{1}{2} Id+(K^{\kappa_{1}}_{D})^{*}\Big] \psi -\frac{1}{\mu_{0}} \Big[-\frac{1}{2} Id + (K^{\kappa_{0}}_{D})^{*} \Big]\phi =\frac{1}{\mu_{0}} \frac{\partial U^{t}}{\partial \nu^1},
 \end{aligned}
 \notag
 \end{equation}
%\end{array}\right.$ \\
into 7 sub problems as in P1, P2, P3, P4, P5, P6, P7 of our previous computations.\\
For this, we first assume that $\epsilon_0$ is of class $C^{0,1}(\mathbb{R}^3)$. Then, we can show that $U^t\in C^{2, 1}(\mathbb{R}^3)$. Hence we can expand $U^t$ at the order 2 near $z \in\Omega$, with the expansions at hand, we can state the 7 subproblems as in P1, P2, P3, P4, P5, P6, P7 by replacing $u^I(z)$ by $U^t(z)$ in P1, P2, P3, P4 and appropriate $\psi$.\\
 It is clear that the new P1, P2, P3, P4 can be dealt in the same way as the ones before (only replacing $U^{I}(z)$ by $U^t(z)$). Hence, we have the corresponding estimates as derived before.\\
 Let us now discuss the way to handle P5, P6 and P7.
 \begin{itemize}
 \item The analysis of the sub problem P5 use $\psi$ only through its apriori estimates. Hence the proof goes as it is for our case here.
 \item In case of P6, the identity \eqref{p6-1} does not change, i.e., \[ \int_{\partial D_1}\phi_6(x) ds(x) =\frac{\mu_{0}}{\mu_{1}} \kappa_{1}^{2} \int_{D} S^{\kappa_{1}}_{D} \psi(x) \ dx.\]
 To derive the corresponding Lemma \ref{int-psi}, i.e.,
$\int_{D} S^{\kappa_{1}}_{D} \psi(x)\ dx = u^{I}(z) \vert D \vert + \mathcal{O}(\frac{\mu_0}{\mu_1}\delta^{4}),$ we need to change in \eqref{p6-3} by 
$\hat{\kappa}_0(\xi):=\kappa_0(\delta\xi+z)=\kappa_0(z)+O(\delta)$.\\
Hence, $\frac{1}{2}\kappa_0^2(z)\leq\hat{\kappa}_0^2(\xi)\leq\frac{3}{2}\kappa_0^2(z)$ if $\delta$ is small enough. With this property, all the remaining steps go almost the same.

\item We can deal with P7 just as in the case when $k_{0}$ is constant, but now $\int_{\partial D} \phi_{7}(x)= \mathcal{O} ( \delta^{4} \vert log \ \delta \vert) $ 
and $\int_{\partial D} (x_{i}-z_{i}) \phi_{7}(x) = \mathcal{O}(\frac{\mu_{0}}{\mu_{1}} \delta^{4} \vert log \ \delta \vert)+\mathcal{O}(\delta^5 \vert log \ \delta \vert)  $. 
 \end{itemize}
The near fields are then of the form
\begin{comment}
\begin{equation}\label{Near-fields-formulas-V}
\begin{aligned}
V^s(x, d)-U^s(x, d)&=G^{\kappa_0}(x, z)[\omega^2 \mu_0 (\epsilon_1-\epsilon_0)U^t(z)\delta^3 \vert B \vert + \mathcal{O}(\frac{\mu_{0}}{\mu_{1}} \delta^{4} \vert log \ \delta \vert)] \\
&-\nabla G^{\kappa_0}(x, z)\cdot \Big[( M \nabla U^t(z))\delta^3+ \mathcal{O}(\frac{\mu_{0}^{2}}{\mu_{1}^{2}} \delta^{4} \vert log \ \delta \vert) \Big] 
 +\mathcal{O}(d^{-3}(x,z)(\frac{\mu_0}{\mu_1})^2 \delta^4)\\
 &=G^{\kappa_0}(x, z)[\omega^2 \mu_0 (\epsilon_1-\epsilon_0)U^t(z)\delta^3 \vert B \vert ] \\
&-\nabla G^{\kappa_0}(x, z)\cdot ( M \nabla U^t(z))\delta^3  + \mathcal{O}(d^{-2}(x,z) \delta^{4-2\alpha} \vert log \ \delta \vert)  
 +\mathcal{O}(d^{-3}(x,z) \delta^{4-2\alpha}),
\end{aligned}
\end{equation}
if $\mu_1\sim \delta^{\alpha}, \; 0< \alpha <1$ or
\end{comment} 
%
\begin{equation}\label{Near-fields-formulas-V-2}
\begin{aligned}
V^s(x, d)-U^s(x, d)&=G^{\kappa_0}(x, z)[\omega^2 \mu_0 (\epsilon_1-\epsilon_0)U^t(z,d)\delta^3 \vert B \vert+ \mathcal{O}(\delta^{4} \vert log \ \delta \vert)+\mathcal{O}(\frac{\mu_0}{\mu_1}k_{1}^{2}\delta^4)]\\ 
&-\nabla G^{\kappa_0}(x, z)\cdot \Big[( M \nabla U^t(z,d))\delta^3 + \mathcal{O}(\frac{\mu_{0}}{\mu_{1}} \delta^{4} \vert log \ \delta \vert)+\mathcal{O}(\delta^4)+\mathcal{O}(\frac{\mu_0}{\mu_1} \delta^{4-\alpha}) \Big]
+\mathcal{O}(d^{-3}(x,z) \delta^{4}) \\
&=G^{\kappa_0}(x, z)[\omega^2 \mu_0 (\epsilon_1-\epsilon_0)U^t(z,d)\delta^3 \vert B \vert+ \mathcal{O}(\delta^{4} \vert log \ \delta \vert)+\mathcal{O}(\delta^{4-\alpha+\beta}) ]\\ 
&-\nabla G^{\kappa_0}(x, z)\cdot ( M \nabla U^t(z,d))\delta^3 + \mathcal{O}(d^{-2}(x,z) \delta^{4+\beta} \vert log \ \delta \vert) +\mathcal{O}(d^{-2}(x,z) \delta^{4-\alpha+\beta})
+\mathcal{O}(d^{-3}(x,z) \delta^{4}),
\end{aligned}
\end{equation}
if $\mu_1\sim \delta^{-\beta}, \; \beta \geq 0$, where $M:= (M_{ij})_{i,j=1}^{3}$ is defined by $M_{ij}:= \int_{\partial B}y^j[\lambda Id+(K_{B}^0)^*]^{-1}(\nu_x\cdot \nabla x^i)(y)\  ds(y) $.\\
Hence in the case of electric nanoparticles $(\beta=0, 0<\alpha<1) $, we have
\begin{equation}
\begin{aligned}
V^s(x, d)-U^s(x, d)&=G^{\kappa_0}(x, z)[\omega^2 \mu_0 (\epsilon_1-\epsilon_0)U^t(z,d)\delta^3 \vert B \vert+ \mathcal{O}(\delta^{4} \vert log \ \delta \vert) ]\\ 
&-\nabla G^{\kappa_0}(x, z)\cdot ( M \nabla U^t(z,d))\delta^3 +\mathcal{O}(d^{-2}(x,z) \delta^{4-\alpha})
+\mathcal{O}(d^{-3}(x,z) \delta^{4}),
\end{aligned}
\notag
\end{equation}
and in the case of magnetic nanoparticles $(\alpha=\beta, 0<\alpha<1) $, we have
\begin{equation}
\begin{aligned}
V^s(x, d)-U^s(x, d)&=G^{\kappa_0}(x, z)[\omega^2 \mu_0 (\epsilon_1-\epsilon_0)U^t(z,d)\delta^3 \vert B \vert+ \mathcal{O}(\delta^{4} \vert log \ \delta \vert) ]\\ 
&-\nabla G^{\kappa_0}(x, z)\cdot ( M \nabla U^t(z,d))\delta^3 + \mathcal{O}(d^{-2}(x,z) \delta^{4+\alpha} \vert log \ \delta \vert) +\mathcal{O}(d^{-3}(x,z) \delta^{4}).
\end{aligned}
\notag
\end{equation}
The far field pattern $V^{\infty}(\hat{x}, d)$ has the expansion
\begin{comment}
\begin{equation}
\begin{aligned}
V^{\infty}(\hat{x}, d)-U^{\infty}(\hat{x}, d)&=
\omega^{2} \mu_{0} (\epsilon_{1}-\epsilon_{0}) U^{t}(z, d)U^t(z,-\hat{x}) \delta^{3} \vert B \vert + U^{t}(z,-\hat{x}) \mathcal{O}(\frac{\mu_{0}}{\mu_{1}} \delta^{4} \vert log \ \delta \vert) \\
&\quad -\nabla_{z} U^t(z,-\hat{x}) \Big(\sum^{3}_{i=1} \partial_{i} U^{t}(z, d) \delta^{3} \int_{\partial B} y^{j} [\lambda Id+ (K^{0}_{B})^{*} ]^{-1} \partial_{\nu} y_{i}+ \mathcal{O}(\frac{\mu_{0}^{2}}{\mu_{1}^{2}} \delta^{4} \vert log \ \delta \vert) \Big) \\
&\qquad +\mathcal{O}(\left(\frac{\mu_0}{\mu_1}\right)^2\delta^{4}), \ \mbox{ if } \mu_1\sim \delta^{\alpha},\; 0<\alpha<1, \ \text{and} \\
\end{aligned}
\end{equation}
\end{comment}
%
\begin{equation}
\begin{aligned}
V^{\infty}(\hat{x}, d)-U^{\infty}(\hat{x}, d) &= \omega^{2} \mu_{0} (\epsilon_{1}-\epsilon_{0}) U^{t}(z, d)U^t(z,-\hat{x}) \delta^{3} \vert B \vert + U^{t}(z,-\hat{x}) \mathcal{O}(\delta^{4} \vert log \ \delta \vert)+U^{t}(z,-\hat{x}) \mathcal{O}(\delta^{4-\alpha+\beta})\\
&\quad -\nabla_{z} U^t(z,-\hat{x}) \Big(\sum^{3}_{i=1} \partial_{i} U^{t}(z, d) \delta^{3} \int_{\partial B}y^j[\lambda Id+(K_{B}^0)^*]^{-1}(\nu_x\cdot \nabla x^i)(y)\  ds(y)+\mathcal{O}(\frac{\mu_{0}}{\mu_{1}} \delta^{4} \vert log \ \delta \vert)\\
&\qquad+\mathcal{O}(\delta^{4-\alpha+\beta})+\mathcal{O}(\delta^4)\Big) +\mathcal{O}(\delta^{4}), \ \mbox{ if } \mu_1\sim \delta^{-\beta},\; \beta \geq 0. 
\end{aligned}
\label{Farfields-formulas-V}
\end{equation}
%
\begin{comment}
Therefore
\begin{equation}
\begin{aligned}
V^{\infty}(\hat{x}, d)-U^{\infty}(\hat{x}, d)&=\omega^{2} \mu_{0} (\epsilon_{1}-\epsilon_{0}) U^{t}(z, d)U^t(z,-\hat{x}) \delta^{3} \vert B \vert  \\
&\quad -\nabla_{z} U^t(z,-\hat{x}) \Big(\sum^{3}_{i=1} \partial_{i} U^{t}(z, d) \delta^{3} \int_{\partial B} y^{j} [\lambda Id+ (K^{0}_{B})^{*} ]^{-1} \partial_{\nu} y_{i} \Big) +
\mathcal{O}(\delta^{4-2\alpha} \vert log \ \delta \vert),\\
&\qquad \mbox{ if } \mu_1\sim \delta^{\alpha},\; 0<\alpha<1, \ \text{and} 
\end{aligned}
\end{equation}
{\color{red}{
\begin{equation}
\begin{aligned}
V^{\infty}(\hat{x}, d)-U^{\infty}(\hat{x}, d)&= \omega^{2} \mu_{0} (\epsilon_{1}-\epsilon_{0}) U^{t}(z, d)U^t(z,-\hat{x}) \delta^{3} \vert B \vert \\
&\quad -\nabla_{z} U^t(z,-\hat{x}) \Big(\sum^{3}_{i=1} \partial_{i} U^{t}(z, d) \delta^{3} \int_{\partial B} y^{j} [\lambda Id+ (K^{0}_{B})^{*} ]^{-1} \partial_{\nu} y_{i} \Big) +\mathcal{O}(\delta^{4+2\alpha} \vert log \ \delta \vert),\\ 
&\qquad \mbox{ if } \mu_1\sim \delta^{-\beta},\; \beta \geq 0.
\end{aligned}
\notag
\end{equation}
}}
\end{comment}
Therefore in the case of electric nanoparticles $(\beta=0, 0<\alpha<1) $, we have
\begin{equation}
\begin{aligned}
V^{\infty}(\hat{x}, d)-U^{\infty}(\hat{x}, d) &= \omega^{2} \mu_{0} (\epsilon_{1}-\epsilon_{0}) U^{t}(z, d)U^t(z,-\hat{x}) \delta^{3} \vert B \vert \\
&\quad -\nabla_{z} U^t(z,-\hat{x}) \Big(\sum^{3}_{i=1} \partial_{i} U^{t}(z, d) \delta^{3} \int_{\partial B}y^j[\lambda Id+(K_{B}^0)^*]^{-1}(\nu_x\cdot \nabla x^i)(y)\  ds(y) \Big) +\mathcal{O}(\delta^{4-\alpha}),
\end{aligned}
\notag
\end{equation}
and in the case of magnetic nanoparticles $(\alpha=\beta, 0<\alpha<1) $, we have
\begin{equation}
\begin{aligned}
V^{\infty}(\hat{x}, d)-U^{\infty}(\hat{x}, d) &= \omega^{2} \mu_{0} (\epsilon_{1}-\epsilon_{0}) U^{t}(z, d)U^t(z,-\hat{x}) \delta^{3} \vert B \vert \\
&\quad -\nabla_{z} U^t(z,-\hat{x}) \Big(\sum^{3}_{i=1} \partial_{i} U^{t}(z, d) \delta^{3} \int_{\partial B}y^j[\lambda Id+(K_{B}^0)^*]^{-1}(\nu_x\cdot \nabla x^i)(y)\  ds(y) \Big) +\mathcal{O}(\delta^{4} \vert log \ \delta \vert).
\end{aligned}
\notag
\end{equation}
%==========================================================
%\section*{Acknowledgments}
\small

\end{document}